% crŽŽ le 17 dŽcembre 2024 
\def\update {03/04/2025}
% SchanuelPropertyAlmostAll.tex
%%%%%%%%%%%%%%%%%%%%%%%%%%%%%%%%%%%%%%%%%%%%%%%%%%%%%%%%%%%%%%%%%
 
%%%%%%%%%%%%%%%%%%%%%%%%%%%%%%%%%%%%%%%%%%%%%%%%%%%%%%%%%%%%%%%%%
\documentclass[11pt,leqno]{epiarticle}
\usepackage{epimath}
 \setpapertype{A4}

\usepackage[english]{babel} 

\usepackage[dvips]{graphicx}

\title{Schanuel Property \\ for Elliptic and Quasi--Elliptic Functions}
\titlemark{Schanuel Property \hfill \update}
\author{Michel Waldschmidt}
\authormark{M. Waldschmidt}
\date{\today} 
\journal{Hardy-Ramanujan Journal -- (2025), ---} % Epijournal name
\acceptation{submitted dd/mm/yyyy, accepted dd/mm/yyyy, revised \update}
%\DOIstring{}

\newcommand{\details}[1]{}

\renewcommand\atop[2]{\genfrac{}{}{0pt}{}{#1}{#2}}

\newcommand{\Z}{\mathbb{Z}}
\newcommand{\Q}{\mathbb{Q}} 
\def\R{\mathbb{R}}

\newcommand{\C}{\mathbb{C}}

\newcommand{\G}{\mathbb{G}}

\newcommand{\Real}{\Re{\mathrm{e}}}

\newcommand{\rme}{\mathrm {e}}
\newcommand{\rmi}{\mathrm {i}}

\newcommand{\End}{\mathrm{End}}

\newcommand{\oK}{\overline \oK}

\def\rmh{{\mathrm h}}

\def\rmH{{\mathrm H}}

\def\calU{{\mathcal{U}}}

\def\ux{\underline{x}}
\def\uX{\underline{X}}
\def\uY{\underline{Y}}
\def\uell{\underline{\ell}}
\def\um{\underline{m}}
\def\uup{{\underline{p}}}

\def\bfx{{\mathbf x}}
\def\bfz{{\mathbf z}}

\def\bfomega{{\boldsymbol{\omega}}}
\def\bfeta{{\boldsymbol{\eta}}}
\def\bflambda{{\boldsymbol{\lambda}}}

\def\rmd{{\mathrm d}}

\overfullrule=2mm

\begin{document}

\overfullrule=0pt

\maketitle
 
\thanks{
\section*{Acknowledgement}
This text is an expanded version of the lecture the author gave in Chennai in December 2024 for the 
{\em International Conference on Number Theory and Related Topics ICNTRT-2024} at the The Institute of Mathematical Sciences (IMSc), Mathematics and Indian Institute of Technology IIT Madras. This conference was organized on the occasion of the 60th birthday of Srinivas Kotyada and Kalyan Chakraborty. The author is happy to felicitate his two friends and to thank the organizers of the conference. He is grateful to Cristiana Bertolin who initiated this investigation which gave rise also to our forthcoming joint papers \cite{BW1,BW2}. }

\begin{prelims}

\def\abstractname{Abstract}
\abstract{For almost all tuples $(x_1,\dots,x_n)$ of complex numbers, a strong version of Schanuel's Conjecture is true: {\em the $2n$ numbers $x_1,\dots,x_n, \rme^{x_1},\dots, \rme^{x_n}$ are algebraically independent}. Similar statements hold when one replaces the exponential function $\rme^z$ with algebraically independent functions. We give examples involving elliptic and quasi--elliptic functions, that we prove to be algebraically independent: $z$, $\wp(z)$, $\zeta(z)$, $\sigma(z)$, exponential functions, and Serre functions related with integrals of the third kind. }

\keywords{(quasi)--elliptic exponential function, Weierstrass $\wp\,$-, $\zeta$- and $\sigma$-functions, Serre functions, Schanuel's Conjecture, conjectures in Schanuel style.}

\MSCclass{11J81, 11J89, 14K25}
 
\tableofcontents

\end{prelims}

%-----------------------------------------
\section{Extended abstract}
Schanuel's Conjecture is one of the main open problems in the theory of transcendental numbers. We remark that a stronger statement, which we call {\em Strong Schanuel Property}, is valid for almost all numbers, and we give an explicit example where the Strong Schanuel Property is satisfied. 

The algebraic independence statement for almost all tuples is valid not only for the usual exponential function, but more generally for algebraically independent functions. We develop this remark for functions related with elliptic curves; we use the Weierstrass model and investigate the algebraic independence of values of elliptic $\wp$ functions as well as of quasi--elliptic functions $\zeta$ and $\sigma$. We also consider values of the functions which were introduced by Serre \cite[Appendix]{Wald79a} in order to study elliptic integrals of the third kind. 

The result that we obtain for almost all numbers is a first step before suggesting conjectures extending Schanuel's one to these elliptic and quasi--elliptic functions. The above functions are involved in the parametrization of the exponential of commutative algebraic groups. 
In \cite{BW1,BW2}, we show how this approach contributes to produce new conjectures having a geometrical origin, which means that they are special cases of the Grothendieck-Andr\'{e} generalized period Conjecture, involving motives, like in earlier works of Cristiana Bertolin (see in particular \cite{B02} and \cite{B20}).

In the appendix, motivated by a joke of D.W. Masser, we state a result involving both a Weierstrass zeta function and the Riemann zeta function.

%-----------------------------------------
\section{Introduction}

The following statement was proposed as a conjecture by Stephen Schanuel while he attended a course by Serge Lang, which gave rise to the book \cite{Lang}.

\medskip
\noindent
{\bf Schanuel's Conjecture}. {\em
If $x_1,\dots,x_n$ are complex numbers linearly independent over $\Q $, then at least $n$ of the numbers $x_1,\dots,x_n, \rme^{x_1},\dots, \rme^{x_n}$ are algebraically independent over $\Q$.
}
 
\medskip
 In other terms the transcendence degree over $\Q$ of the field $\Q(x_1,\dots,x_n, \rme^{x_1},\dots, \rme^{x_n})$ is at least $n$. 

This lower bound $n$ for the transcendence degree is clearly best possible: for instance if $x_1,\dots,x_n$ are algebraic, then the transcendence degree is $n$. This Theorem of Lindemann--Weierstrass is one of the very few known special cases where Schanuel's Conjecture is known to hold. 
 
 Our first remark is that a stronger statement is valid for almost all tuples (for Lebesgue measure in $\C^n$). 
 
\vskip 0.2truecm
\par\noindent \textbf{Definition.}
\textit{ 
We say that an $n$--tuple $(x_1,\dots,x_n)$ of complex numbers satisfies the {\em Strong Schanuel Property} if the $2n$ numbers 
$x_1,\dots,x_n,\rme^{x_1},\dots,\rme^{x_n}$ are algebraically independent over $\Q$.}

In Example \ref{example}, \S~\ref{S:ExampleStrongSchanuelExplicit}, we produce uncountably many explicit $n$--tuples $(x_1,\dots,x_n)$ of complex numbers that satisfy the Strong Schanuel Property.

The fact that the Strong Schanuel Property holds for almost all tuples of complex numbers merely uses the fact that the exponential function is transcendental (over $\C(z)$). It is the special case $K=\Q$, $t=2$, $f_1(z)=z$, $f_2(z)=\rme^z$ of the following result:

\begin{proposition}\label{Prop:SSPIF}
	Let $K$ be a finitely generated extension of $\Q$. Let $f_1,\dots,f_t$ be meromorphic functions in $\C$ which are algebraically independent over $K$. 
	Then for almost all tuples $(z_1,\dots,z_n)$ of complex numbers, the $nt$ numbers 
	\begin{equation}\label{Equation:SSPIF}
f_j(z_i) \qquad (i=1,\dots,n,\quad j=1,\dots,t)
	\end{equation}
	are algebraically independent over $K$. 
\end{proposition}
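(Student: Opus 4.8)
The plan is to exhibit the set of exceptional tuples as a countable union of Lebesgue-null sets. Let $S\subset\C$ be the (discrete, countable) set of poles of $f_1,\dots,f_t$; the tuples for which some $z_i$ lies in $S$ form a countable union of "hyperplanes" $\{z_i=s\}$, hence a null set, so it suffices to work on the connected open set $U=(\C\setminus S)^n$. For a nonzero polynomial $P\in K[X_{ij}\,:\,1\le i\le n,\ 1\le j\le t]$ put $F_P(z_1,\dots,z_n):=P\big((f_j(z_i))_{i,j}\big)$, a holomorphic function on $U$, and $Z_P:=\{z\in U\,:\,F_P(z)=0\}$. Since $K$ is finitely generated over $\Q$ it is countable, hence so is $K[X_{ij}]$, and a tuple in $U$ violates the conclusion exactly when it lies in some $Z_P$ with $P\neq0$; thus it is enough to show that each such $Z_P$ has measure zero.

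Next I would invoke the standard fact that a holomorphic function on a connected open subset of $\C^n$ is either identically zero or has a zero set of Lebesgue measure zero (its zero set being a proper analytic subset, of real codimension at least two; alternatively this follows from the one-variable case by Fubini). Hence it remains only to rule out $F_P\equiv0$ on $U$ for $P\neq0$, i.e.\ to prove the purely function-theoretic assertion that the $nt$ meromorphic functions $(z_1,\dots,z_n)\mapsto f_j(z_i)$ on $\C^n$ are algebraically independent over $K$. This is where the hypothesis on $f_1,\dots,f_t$ is used.

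I would prove this by induction on $n$, the case $n=1$ being just a restatement of the hypothesis. For the inductive step, suppose $F_P\equiv0$ and expand $P$ as a polynomial in the last block of variables, $P=\sum_{\beta}P_\beta\,X_n^{\beta}$ with $P_\beta\in K[X_{ij}\,:\,i\le n-1]$ and $X_n^\beta=\prod_j X_{n,j}^{\beta_j}$, not all $P_\beta$ zero. Then
\[
\sum_{\beta}P_\beta\big((f_j(z_i))_{i\le n-1}\big)\cdot\prod_{j}f_j(z_n)^{\beta_j}=0
\]
identically on $U$. Freezing $z_1,\dots,z_{n-1}$ turns this into a $\C$-linear relation among the monomials $\prod_j f_j^{\beta_j}$ in the single variable $z_n$; by the algebraic independence of $f_1,\dots,f_t$ these monomials are linearly independent over the field of constants $\C$, so each coefficient $P_\beta\big((f_j(z_i))_{i\le n-1}\big)$ vanishes identically on $(\C\setminus S)^{n-1}$, and the inductive hypothesis forces $P_\beta=0$ for all $\beta$, a contradiction.

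The step I expect to be the main obstacle is this last function-theoretic lemma: that algebraically independent functions of one variable, spread out as $f_j(z_i)$, remain algebraically independent over $n$-variable ground. Everything else is soft (countability, measure-zero of analytic sets); the content is the separation of variables, which runs smoothly once one has linear independence of the monomials $\prod_j f_j^{\beta_j}$ over the constant field $\C$. The one genuinely delicate point is to make precise the passage from "independent over $K$" to this slightly stronger linear independence over $\C$; this is automatic when the $f_j$ have Laurent coefficients in $K$, which is the case for the elliptic and quasi--elliptic functions considered later, but it should be stated explicitly.
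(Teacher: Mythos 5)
Your proof follows the same route as the paper's: the exceptional set is the countable union, over nonzero $P\in K[X_{ij}]$, of the zero sets of $F_P$, each of which is Lebesgue--null provided $F_P\not\equiv 0$. The paper's own proof stops there, simply calling $F_P$ ``a nonzero meromorphic function in $\C^n$'' without justification; you go further and supply the inductive separation--of--variables argument for this nonvanishing, which is the only real mathematical content of the proposition beyond soft measure theory and countability.

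The ``delicate point'' you flag at the end is not cosmetic: it is exactly where your induction (and the proposition as literally stated) breaks. Algebraic independence of $f_1,\dots,f_t$ over $K$ does not imply that the monomials $\prod_j f_j^{\beta_j}$ are linearly independent over $\C$, and without that the inductive step fails. Worse, the conclusion itself can fail: take $K=\Q$, $t=2$, $f_1(z)=z$, $f_2(z)=\pi z$. These are algebraically independent over $\Q$ (a relation $\sum_{i,j} a_{ij}\pi^j z^{i+j}=0$ with $a_{ij}\in\Q$ forces all $a_{ij}=0$ because $\pi$ is transcendental), yet for $n=2$ the identity $f_2(z_1)f_1(z_2)-f_1(z_1)f_2(z_2)=0$ shows that the four values are algebraically dependent over $\Q$ for \emph{every} pair $(z_1,z_2)$. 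So the hypothesis must in effect be algebraic independence over $\C$ (equivalently, as you observe, independence over $K$ together with $K$--rationality of the Laurent coefficients at some point, since then the kernel of $Q\mapsto Q(f_1,\dots,f_t)$ on $\C[X_1,\dots,X_t]$ is the base change of its $K$--points). With that hypothesis your induction closes cleanly: distinct monomials in functions algebraically independent over $\C$ are linearly independent over $\C$, and one proves by induction the stronger statement that the $f_j(z_i)$ are algebraically independent over $\C$. All of the paper's applications satisfy this (Theorem \ref{Theorem:AlgebraicIndependenceFunctions} and Proposition \ref{prop:WeierstrasVsRiemann} establish independence over $\C$), so the corollaries are unaffected; but your instinct that the point ``should be stated explicitly'' is correct, and the paper's proof passes over it in silence.
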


The converse is plain: if $f_1,\dots,f_t$ are meromorphic functions in $\C$ which are algebraically dependent over $\C$, if we denote by $K$ the field generated over $\Q$ by the coefficients of a polynomial in $\C[X_1,\dots,X_t]$ which vanishes when the variables are specialized as $f_1,\dots,f_t$, then for all $n$ and all tuples $(z_1,\dots,z_n)$ of complex numbers the $nt$ numbers \eqref{Equation:SSPIF} are algebraically dependent over $K$.

 Our main result on algebraic independence of periodic and quasi--periodic functions is the following theorem, which involves Weierstrass $\wp$, $\zeta$ and $\sigma$ functions as well as Serre functions $f_u$ (see \S \ref{S:functions}). Among a number of special cases it contains \cite[Lemma 5]{Reyssat82} and \cite[Lemma 20.10]{M}. We identify the {\em ring $\End(E)$ of endomorphisms of $E$} with the set of complex numbers $\alpha$ such that $\alpha\Omega\subset\Omega$, where $\Omega$ is the lattice of periods of $E$. 
 The field of fraction $k:=\End(E)\otimes_\Z\Q$ of $\End(E)$ is the {\em field of endomorphisms of $E$}, namely $k=\Q$ in the non--CM case, and $k=\Q(\tau)$ in the CM case, whenever $\tau$ is the quotient of two independent periods. 

\begin{theorem}\label{Theorem:AlgebraicIndependenceFunctions}
	Let $t_1,\dots,t_r$ be complex numbers linearly independent over $\Q$. Let $u_1,\dots,u_s$ be complex numbers. 
	\\
1. 	 Assume that $\omega_1,\omega_2,u_1,\dots,u_s$ are linearly independent over $\Q$.
	Then the $4+r+s$ functions 
\[ 
	z, \wp(z), \zeta(z),\sigma(z), \rme^{t_1z},\dots, \rme^{t_rz}, f_{u_1}(z),\dots,f_{u_s}(z)
\]
	are algebraically independent. 
\\
2. Assume further that the elliptic curve $E$ has complex multiplication with field of endomorphisms $k$. Let $\alpha_1,\dots,\alpha_s$ be elements in $\End(E)\smallsetminus\Z$. Assume that $\omega_1,u_1,\dots,u_s$ are linearly independent over $k$. Then the $4+r+2s$ functions 
\[ 
	z, \wp(z), \zeta(z),\sigma(z), \rme^{t_1z},\dots, \rme^{t_rz}, f_{u_1}(z),\dots,f_{u_s}(z), f_{u_1}(\alpha_1z),\dots,f_{u_s}(\alpha_sz)
\]
	are algebraically independent. 
\end{theorem}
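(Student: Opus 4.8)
\medskip\noindent\textbf{Proof plan.}
The plan is to realise the functions listed in the statement as the coordinate functions of a one--parameter analytic subgroup $z\mapsto\exp_G(zv)$ of the exponential map of a connected commutative algebraic group $G$ over $\C$, with $v\in\Lie(G)$, and to reduce the asserted algebraic independence to the Zariski density of that subgroup. The dictionary is standard: an algebraic relation among the functions corresponds to a proper connected algebraic subgroup $H\subsetneq G$ whose Lie algebra contains $v$, so the functions are algebraically independent exactly when $v$ lies in $\Lie(H)$ for no such $H$; this is also what underlies the special cases quoted from \cite{Reyssat82} and \cite{M}. Here $G$ is an iterated extension of elementary groups: the coordinate $z$ comes from $\G_a$, the $\rme^{t_iz}$ from $\G_m^r$, the pair $(\wp,\wp')$ from $E$; adjoining $\zeta$ (with $\zeta'=-\wp$) enlarges $E$ to its universal vectorial extension $E^\dagger$, an extension of $E$ by $\G_a$; adjoining $\sigma$, whose logarithmic derivative is $\zeta$, enlarges this by a $\G_m$; and each Serre function $f_{u_j}$, whose logarithmic derivative $\bigl(\wp'(z)-\wp'(u_j)\bigr)/\bigl(2(\wp(z)-\wp(u_j))\bigr)$ lies in the elliptic function field $\C(\wp,\wp')$, adds the $\G_m$ of the Poincar\'e biextension of $E$ attached to the point $u_j$ (integrals of the third kind). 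Using the addition theorems for $\wp,\zeta,\sigma$ and the quasi--periodicity relations one checks $\dim G=4+r+s$, so ``algebraically independent'' means precisely ``a transcendence basis of the field they generate'', i.e.\ Zariski density of the subgroup.

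The heart of the matter is to enumerate the proper connected algebraic subgroups of $G$ and to see that $v$ is subordinate to one of them only if the hypotheses are violated. Since $G$ is built from $\G_a$, $\G_m$ and $E$ by successive extensions, and since there are no nonzero homomorphisms between tori and powers of $E$, nor between $\G_a$ and $\G_m$, such a subgroup forces either a nontrivial $\Q$--linear relation among $t_1,\dots,t_r$ (coming from the $\G_m^r$--part), or a nontrivial relation $a_0\omega_1+b_0\omega_2+\sum_j a_ju_j=0$ with rational coefficients on the elliptic and biextension part; the latter relation is precisely what makes a $\Q$--combination of the $u_j$ a torsion point of $E$, equivalently makes a monomial in the $f_{u_j}$ an elliptic function modulo $\C(\wp,\wp')$, and hence what is needed to land in a proper sub--biextension. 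The directions carried by $z$, $\zeta$ and $\sigma$ add no further condition: the first because $z$ is non--constant, the other two because $E^\dagger$ is the \emph{universal} vectorial extension and $\sigma$ lies strictly above $\zeta$. Both types of relation are excluded by the hypotheses, the $\Q$--independence of $\omega_1,\omega_2,u_1,\dots,u_s$ being automatic for the pair $\omega_1,\omega_2$ and genuine for the $u_j$. The step I expect to be the main obstacle is exactly this enumeration of subgroups together with the bookkeeping matching each one to the relation it produces --- in particular controlling the biextension directions of the $f_{u_j}$ and checking that the $\G_a$-- and $\G_m$--directions carried by $\sigma$ do not interact.

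For part~2 the group $G$ acquires an action of the order $\End(E)$, so $\Lie(G)$ becomes a module over $k$; the new functions $f_{u_j}(\alpha_jz)$, with $\alpha_j\in\End(E)\smallsetminus\Z$, are the coordinates obtained by applying $\alpha_j$ to the biextension direction attached to $u_j$ --- note that $\wp(\alpha_jz)$ is rational in $\wp(z),\wp'(z)$, so the logarithmic derivative of $f_{u_j}(\alpha_jz)$ still lies in $\C(\wp,\wp')$ --- and because $\alpha_j\notin\Z$ these directions are not proportional over $\Q$ to the $f_{u_j}(z)$--directions. The density argument is the same, except that the relations that can now occur, because of the $\End(E)$--structure, reduce to a nontrivial $k$--linear relation among $\omega_1,u_1,\dots,u_s$: a degenerating monomial $\prod_j f_{u_j}(\alpha_jz)^{a_j}f_{u_j}(z)^{b_j}$ forces $\sum_j(a_j\alpha_j+b_j)u_j\in\Q\Omega=k\omega_1$, and here $a_j\alpha_j+b_j=0$ entails $a_j=b_j=0$ precisely because $\alpha_j\notin\Z$; this contradicts the strengthened hypothesis, and otherwise the argument is parallel to part~1. (An alternative to the subgroup enumeration is to work inside the differential field generated by $\wp,\wp'$, where $z$ and $\zeta$ are successive primitives while $\rme^{t_iz},\sigma,f_{u_j}(z),f_{u_j}(\alpha_jz)$ are exponentials of elements already produced, and to invoke an Ax--Schanuel--type theorem for such towers; verifying its hypotheses is again the linear--independence bookkeeping above.)
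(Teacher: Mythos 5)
Your plan breaks down at the point where you adjoin $\sigma$: the Weierstrass sigma function is \emph{not} a coordinate of the exponential map of any commutative algebraic group, so the dictionary ``algebraic relation $\Leftrightarrow$ proper algebraic subgroup $H$ with $v\in\Lie(H)$'' simply does not apply to it. The obstruction is visible in the quasi--periodicity law \eqref{Equation:sigma}: the multiplier $\epsilon(\omega)\exp\bigl(\eta(\omega)(z+\omega/2)\bigr)$ is quadratic in $\omega$ and depends on $z$, so $\sigma$ is a theta function (a section of a line bundle on $E$), whereas group coordinates have multipliers that are linear exponentials in $\omega$ (as the $f_{u}$ do, precisely because the quadratic parts of the three sigmas in their definition cancel). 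Concretely, your claimed ``extension by a $\G_m$ above $\zeta$'' does not exist algebraically: since $\Hom(\G_a,\G_m)=\Ext^1(\G_a,\G_m)=0$, every extension of $E^\dagger$ by $\G_m$ is pulled back from an extension of $E$ by $\G_m$, and its new coordinate is a Serre function $f_u$, never $\sigma$. The analytic extension defined by the cocycle $\eta(\omega)(z+\omega/2)$ is non--algebraic, and that non--algebraicity is exactly what has to be \emph{proved}; it is the content of Proposition \ref{Proposition:sigma}, which the paper establishes by a separate growth argument (Lemma \ref{Lemma:sigma}: some $\Real(\omega\eta(\omega))>0$ because $\sigma$ has order $2$, so $|\sigma(z_0+a\omega_1)|$ grows like $\rme^{|a|^2\Real(\omega_1\eta_1)}$ while every other function in the list grows at most like $c^{|a|}$ along that progression). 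Your parenthetical Kolchin--Ostrowski/Ax alternative could in principle absorb $\sigma$ as an exponential of an integral of $\zeta$, but it only reduces the problem to showing that no power of $\sigma$ lies in the field generated by the remaining functions, and that again needs the growth (or a zero/pole divisor) argument you have not supplied. So the sentence ``$\sigma$ lies strictly above $\zeta$'' is asserting, not proving, the hardest point.

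For the remaining $3+r+s$ functions (and the CM variant) your geometric route is legitimate and genuinely different from the paper's: the paper avoids any classification of algebraic subgroups and instead evaluates a putative relation $F(z_0+a\omega_1+b\omega_2)=0$ over the lattice, uses the quasi--periodicity laws to turn this into a vanishing polynomial--exponential system in $(a,b)$, and applies a generalized Vandermonde determinant (Lemmas \ref{Lemma:Vandermonde}--\ref{Lemma:vanderMonde2variables}) to extract a multiplicative dependence, which Legendre's relation \eqref{Equation:ConsequenceLegendre} converts into the forbidden $\Q$-- (resp.\ $k$--) linear relation among $\omega_1,\omega_2,u_1,\dots,u_s$. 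Your reduction of the CM case to $\sum_j(a_j\alpha_j+b_j)u_j\in k\omega_1$ matches the paper's conclusion. But even there you explicitly defer the enumeration of subgroups of the iterated extension, which you yourself identify as the main obstacle; as written, the proposal is a plausible programme with its two critical steps (the subgroup enumeration and, above all, the treatment of $\sigma$) missing, and the second of these cannot be completed within the framework you chose.
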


Proposition \ref{Proposition:AlgebraicIndependenceSerreFunctions} shows that in item 1 the assumption that $\omega_1,\omega_2,u_1,\dots,u_s$ are linearly independent over $\Q$ cannot be omitted.

From Proposition \ref{Prop:SSPIF} and Theorem \ref{Theorem:AlgebraicIndependenceFunctions} we deduce at once:

\begin{corollary}[Strong elliptic Schanuel property]\label{Cor:SESP}
Let $K$ be a finitely generated extension of $\Q$. 
Let $t_1,\dots,t_r$ be complex numbers linearly independent over $\Q$. Let $u_1,\dots,u_s$ be complex numbers.
\\
1. Assume that $\omega_1,\omega_2,u_1,\dots,u_s$ are linearly independent over $\Q$. Then for almost all $n$--tuples $(z_1,\dots,z_n)$ of complex numbers, the $n(4+r+s)$ numbers
\[ 
	z_i, \wp(z_i), \zeta(z_i), \sigma(z_i), \rme^{t_1z_i},\dots, \rme^{t_rz_i}, f_{u_1}(z_i),\dots,f_{u_s}(z_i)
	\quad (i=1,\dots,n) 	
\] 
are algebraically independent over $K(g_2,g_3)$. 
\\
2. Assume that the elliptic curve $E$ has complex multiplication; let $k$ be the field of endomorphisms of $E$. Let $\alpha_1,\dots,\alpha_s$ be elements in $\End(E)\smallsetminus\Z$. Assume that $\omega_1,u_1,\dots,u_s$ are linearly independent over $k$. Then for almost all $n$--tuples $(z_1,\dots,z_n)$ of complex numbers, the $n(4+r+2s)$ numbers
\[ 
	z_i, \wp(z_i), \zeta(z_i), \sigma(z_i), \rme^{t_1z_i},\dots, \rme^{t_rz_i}, f_{u_1}(z_i),\dots,f_{u_s}(z_i), f_{u_1}(\alpha_1z_i),\dots,f_{u_s}(\alpha_sz_i)
	\quad (i=1,\dots,n) 
\] 
are algebraically independent over $K(g_2,g_3)$. 
\end{corollary}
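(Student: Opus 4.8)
The plan is to deduce the Corollary immediately by combining Proposition~\ref{Prop:SSPIF} with Theorem~\ref{Theorem:AlgebraicIndependenceFunctions}, exactly as the sentence preceding the statement already announces. The only genuine work is to arrange the hypotheses so that Proposition~\ref{Prop:SSPIF} applies with the right base field, and to check that the functions appearing in Theorem~\ref{Theorem:AlgebraicIndependenceFunctions} are meromorphic on all of $\C$.

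\medskip
\noindent\textbf{Proof of Corollary~\ref{Cor:SESP}.}
We treat item~1; item~2 is identical, using part~2 of Theorem~\ref{Theorem:AlgebraicIndependenceFunctions} in place of part~1. Set
\[
t := 4+r+s, \qquad
(f_1,\dots,f_t) := \bigl(z,\ \wp(z),\ \zeta(z),\ \sigma(z),\ \rme^{t_1z},\dots,\rme^{t_rz},\ f_{u_1}(z),\dots,f_{u_s}(z)\bigr).
\]
Each of these functions is meromorphic on $\C$: the identity, the exponentials and the Serre functions $f_u$ are entire (see \S\ref{S:functions}), $\wp$ and $\zeta$ are meromorphic with poles on $\Omega$, and $\sigma$ is entire. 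Under the hypothesis that $\omega_1,\omega_2,u_1,\dots,u_s$ are linearly independent over $\Q$, part~1 of Theorem~\ref{Theorem:AlgebraicIndependenceFunctions} asserts that $f_1,\dots,f_t$ are algebraically independent over $\C$, hence a fortiori over any subfield; in particular they are algebraically independent over the finitely generated field $L := K(g_2,g_3)$, which is itself a finitely generated extension of $\Q$. Now apply Proposition~\ref{Prop:SSPIF} with this field $L$ and these functions $f_1,\dots,f_t$: for almost all $n$--tuples $(z_1,\dots,z_n)$ of complex numbers, the $nt = n(4+r+s)$ numbers $f_j(z_i)$ for $1\le i\le n$, $1\le j\le t$, are algebraically independent over $L = K(g_2,g_3)$. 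These are exactly the numbers listed in the statement, which completes the proof of item~1. For item~2, one takes $t := 4+r+2s$ and appends $f_{u_1}(\alpha_1 z),\dots,f_{u_s}(\alpha_s z)$ to the tuple of functions; these are again entire functions of $z$, the hypothesis of linear independence of $\omega_1,u_1,\dots,u_s$ over $k$ gives algebraic independence over $\C$ by part~2 of Theorem~\ref{Theorem:AlgebraicIndependenceFunctions}, and Proposition~\ref{Prop:SSPIF} finishes as before. \hfill$\square$

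\medskip
There is essentially no obstacle here: the Corollary is a formal consequence of the two preceding results, and the only points to verify are the bookkeeping ones above — that $K(g_2,g_3)$ is finitely generated over $\Q$ (clear, since $K$ is and $g_2,g_3$ are two further elements), and that algebraic independence of the functions over $\C$, which Theorem~\ref{Theorem:AlgebraicIndependenceFunctions} provides, implies algebraic independence over the subfield $K(g_2,g_3)$ needed to invoke Proposition~\ref{Prop:SSPIF}. All the analytic and Diophantine content is already packaged in Proposition~\ref{Prop:SSPIF} and Theorem~\ref{Theorem:AlgebraicIndependenceFunctions}.
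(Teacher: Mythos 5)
Your proof is correct and is exactly the deduction the paper intends (the paper itself only says the Corollary follows ``at once'' from Proposition~\ref{Prop:SSPIF} and Theorem~\ref{Theorem:AlgebraicIndependenceFunctions}). One small factual slip that does not affect the argument: the Serre functions $f_u$ are not entire but meromorphic on $\C$ (they have poles at the zeros of $\sigma(z)$, i.e.\ on $\Omega$), which is all that Proposition~\ref{Prop:SSPIF} requires.
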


It is a challenge to state a conjecture, extending Schanuel's one, which would hold for all tuples $(z_1,\dots,z_n)$ of complex numbers. Such a statement should hold in particular when these $n$ numbers are algebraic, like in the Lindemann--Weierstrass Theorem. Hence, in the conclusion, one should not predict anything better than a transcendence degree $\geqslant n(3+r+s)$ for part 1 (resp. $\geqslant n(3+r+2s)$ in the CM case for part 2). Such a lower bound is sometimes too optimistic (see for instance the so--called {\em exceptional case} in \cite{BW1}).
Also, we would need to assume some condition of linear independence of the $z_i$ over the field of endomorphisms of $\wp$. One should as well replace the finitely generated field $K$ by $\Q\bigl(g_2,g_3,\wp(u_1),\dots,\wp(u_n)\bigr)$.
 
 The elliptic function $\wp$ parametrizes the exponential of an elliptic curve, the zeta function $\zeta$ occurs in the parametrization of the exponential of an extension of an elliptic curve by the additive group, Serre functions $f_u$ occur in the parametrization of the exponential of extensions of an elliptic curve by the multiplicative group \cite[Appendix by Serre]{Wald79a}, 
 \cite[Chapter 20, Exercise 20.104]{M}.
 
 In forthcoming joint papers \cite{BW1,BW2} with Cristiana Bertolin, we propose new conjectures in the style of Corollary \ref{Cor:SESP} but which we expect to be true for all tuples of complex numbers. 

 %-----------------------------------------
\section{Explicit examples of tuples satisfying the Strong Schanuel Property}\label{S:ExampleStrongSchanuelExplicit}
We now give explicit examples of tuples satisfying the Strong Schanuel Property. 
 We denote by $\Vert\cdot\Vert$ the distance to the nearest integer. 

\begin{theorem}\label{StrongSchanuelExplicit}
	Let $\psi:\Z_{>0}\to \R_{>0}$ be a decreasing function such that 
\[ 
	\psi(q)<\rme^{-q^4}.
\] 
	Let $x_1,\dots,x_n$ be real numbers. Assume that there exists a sequence $(q_k)_{k\geqslant 0}$ of positive integers such that
\[ 
	0<k^{n-1}\Vert q_kx_n\Vert\leqslant 
	k^{n-2}\Vert q_k x_{n-1}\Vert\leqslant 
	\cdots
	\leqslant
	k \Vert q_k x_2\Vert\leqslant \Vert q_k x_1\Vert\leqslant \psi(q_k)
\] 
	for all $k\geqslant 0$. 
	Then the $n$--tuple $(x_1,\dots,x_n)$ satisfies the Strong Schanuel Property. 
\end{theorem}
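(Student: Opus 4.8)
The plan is to construct, from the hypothesis, a sequence of rational approximations so good that any putative algebraic relation among $x_1,\dots,x_n,\rme^{x_1},\dots,\rme^{x_n}$ would be violated, via a Liouville-type estimate. First I would assume for contradiction that these $2n$ numbers satisfy a nontrivial polynomial relation $P(x_1,\dots,x_n,\rme^{x_1},\dots,\rme^{x_n})=0$ with $P\in\Z[Y_1,\dots,Y_n,Z_1,\dots,Z_n]$. The key idea is that the conditions $0<k^{n-1}\Vert q_kx_n\Vert\le\cdots\le\Vert q_kx_1\Vert\le\psi(q_k)$ say that $q_kx_i$ is extremely close to an integer $p_{i,k}$, with the distances decreasing in $i$ and at an exponential-in-$q_k$ rate; since $x_i$ is well approximated by $p_{i,k}/q_k$, the number $\rme^{x_i}$ is well approximated by $\rme^{p_{i,k}/q_k}$, which after raising to the power $q_k$ is algebraic (a $q_k$-th root of $\rme^{p_{i,k}}$ — here one substitutes $X=\rme^{1/q_k}$, a transcendental number, but the point is to track everything as a polynomial in finitely many quantities and bound a nonzero value from below and above simultaneously).

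The core mechanism I would use is the standard auxiliary-function / zero-estimate dichotomy adapted to this very strong Diophantine input. Set $u_k=1/q_k$ and consider the analytic function $F(z)=P(q_kz x_1/ (q_k x_1),\dots)$ — more cleanly: consider the one-variable function $g(z)=P(x_1+a_1z,\dots,x_n+a_nz,\rme^{x_1+a_1z},\dots,\rme^{x_n+a_nz})$ for suitable constants $a_i$; $g$ is entire, not identically zero if $P\ne0$ (because $z,\rme^z$ and translates are algebraically independent over $\Q$, so the $x_i,\rme^{x_i}$ being "generic enough" along this line forces $g\not\equiv0$), and $g(0)=0$. The real work is to exploit the hypothesis to produce a point $\xi_k$ (built from $p_{i,k}/q_k$) extremely close to $0$ where $g$ takes a value that is on one hand a nonzero algebraic-type quantity with controlled denominator and height — so bounded below by Liouville — and on the other hand is $O(\psi(q_k)^{1/\text{something}})$ by the mean value theorem, because $g$ vanishes at $0$ and $|\xi_k|$ is tiny. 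The nested inequalities with the weights $k^{n-i}$ are exactly what is needed to control the contributions of the several variables simultaneously: each $x_i$ contributes an error of a different order of magnitude, and the ordering guarantees these do not conspire badly.

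The main obstacle, and where I expect to spend the most care, is handling the exponentials $\rme^{x_i}$: unlike $x_i$ itself, $\rme^{x_i}$ is not approximated by a rational of denominator $q_k$ but by $\rme^{p_{i,k}/q_k}$, a transcendental number, so the Liouville lower bound must be applied in the field $\Q(\rme^{1/q_k})$ or, better, one must first reduce to a polynomial identity by clearing the exponentials: write $\rme^{x_i}=\rme^{p_{i,k}/q_k}\rme^{\delta_{i,k}}$ with $|\delta_{i,k}|\le\Vert q_kx_i\Vert/q_k$ tiny, expand $\rme^{\delta_{i,k}}=1+O(\delta_{i,k})$, and substitute the single transcendental $\theta_k=\rme^{1/q_k}$; then $P$ evaluated at the approximants becomes a polynomial in $\theta_k$ with integer coefficients of controlled size, which is either zero (excluded by algebraic independence of $z$ and $\rme^z$ applied at the nonzero rational points $p_{i,k}/q_k$, distinct since the $x_i$ are $\Q$-independent) or nonzero hence $\ge\theta_k^{-\deg}\cdot(\text{something})$ — but $\theta_k\to1$, so this lower bound degenerates and one needs the $\rme^{-q^4}$ growth of $\psi$ to beat the combined denominator growth. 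Carefully balancing the degree of $P$ (fixed), the exponent $q_k$ of the denominators, and the super-exponential smallness $\psi(q_k)<\rme^{-q_k^4}$ is the crux; the weights $k^{n-i}$ and the decreasing nature of $\psi$ then close the induction on the number of variables. Once the quantitative estimate is set up, deriving the contradiction is routine, and by Proposition \ref{Prop:SSPIF}'s philosophy one sees such $(x_1,\dots,x_n)$ are uncountable since $\psi$ leaves enough room (a Cantor-like construction of the $q_k$ and the digits of the $x_i$).
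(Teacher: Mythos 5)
Your general strategy --- replace $x_i$ by $p_{i,k}/q_k$, hence $\rme^{x_i}$ by $\rme^{p_{i,k}/q_k}$, clear denominators to obtain an integer polynomial $F_k$ evaluated at the single number $\theta_k=\rme^{1/q_k}$, and play an upper bound $O(\psi(q_k))$ against a lower bound for a nonzero value --- is the same skeleton as the paper's proof. But the step you call ``the Liouville lower bound applied in the field $\Q(\rme^{1/q_k})$'' is where the proof actually lives, and as stated it is not valid: $\rme^{1/q_k}$ is \emph{transcendental}, so $\Q(\rme^{1/q_k})$ is not a number field and there is no Liouville inequality bounding $|F_k(\theta_k)|$ from below merely because $F_k\neq 0$; a nonzero integer polynomial can take arbitrarily small nonzero values at a transcendental point. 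What is needed is a \emph{transcendence measure} for $\rme^{1/q_k}$ with explicit dependence on $q_k$. The paper obtains it in two steps: Fel'dman's lemma converts a small value $|F_k(\theta_k)|$ into the existence of an algebraic root $\alpha_k$ of $F_k$ with $|\rme^{1/q_k}-\alpha_k|$ small, hence $|1/q_k-\log\alpha_k|\leqslant c\,\psi(q_k)$; then a Baker-type lower bound for the nonhomogeneous linear form $|\beta-\log\alpha|$ (Lemma \ref{Lemme:fl1log}) gives $|1/q_k-\log\alpha_k|\geqslant\exp\{-c\,q_k^3(\log q_k)^2\}$. The exponent $q^4$ in the hypothesis $\psi(q)<\rme^{-q^4}$ is calibrated precisely to beat this Baker bound; your proposal never identifies this input, and without it the contradiction cannot be reached.

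A second, smaller gap: you need the exponents $p_{k1}m_1+\cdots+p_{kn}m_n$ of $F_k$ to be pairwise distinct and $F_k$ to be a nonzero polynomial, and for this the paper first proves that $x_1,\dots,x_n$ are \emph{algebraically} independent by invoking Durand's criterion (Lemma \ref{Lemme:Durand}) --- this is the actual purpose of the nested inequalities with the weights $k^{n-i}$, not the analytic balancing of error terms you describe, and there is no induction on $n$. You assert the $x_i$ are ``$\Q$-independent'' without deriving it from the hypotheses, and you appeal to ``algebraic independence of $z$ and $\rme^z$'' to rule out $F_k=0$, which does not address the possibility that the specific integer coefficients of $F_k$ all vanish; the paper rules this out by letting $k\to\infty$ and contradicting the algebraic independence of the $x_i$. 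Finally, the auxiliary-function $g(z)$ digression and the closing remark about uncountability are not part of a proof of this theorem.
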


Before proving Theorem \ref{StrongSchanuelExplicit}, we show how it produces uncountably many explicit examples of $n$--tuples of real numbers (in fact Liouville numbers) satisfying the Strong Schanuel Property.

\begin{example}\label{example}
Define a sequence $(q_k)_{k\geqslant 0}$ of positive integers by $q_0=1$ and $q_{k+1}=3^{q_k^4}$ for $k\geqslant 0$.
 For $\ell\geqslant 1$ and $1\leqslant i\leqslant n$, let $\epsilon_\ell^{(i)}\in\{-1,+1\}$. For $i=1,\dots,n$ and $k\geqslant 1$, set
\[
x_i=\sum_{\ell=1}^\infty \epsilon_\ell^{(i)} 
\frac{\bigl(4(\ell-1)\bigr)^{n-i}} {q_\ell}
\qquad \text{and}\qquad
p_k^{(i)}=q_k\sum_{\ell = 1}^k \epsilon_\ell^{(i)}\frac{\bigl(4(\ell-1)\bigr)^{n-i}} {q_\ell} ,
\]
so that 
\[
q_kx_i-p_k^{(i)}=q_k\sum_{\ell=k+1}^\infty \epsilon_\ell^{(i)} 
\frac{\bigl(4(\ell-1)\bigr)^{n-i}} {q_\ell}
=\epsilon_{k+1}^{(i)} 
\frac{(4 k)^{n-i} q_k} {q_{k+1}}
+q_k\sum_{\ell=k+2}^\infty \epsilon_\ell^{(i)} 
\frac{\bigl(4(\ell-1)\bigr)^{n-i}} {q_\ell}
\cdotp
\]
Then for $1\leqslant i\leqslant n$ and $k$ sufficiently large, we have
\[
\frac{(4k)^{n-i} q_k} {2q_{k+1}} \leqslant 
\left| q_k x_i- p_k^{(i)} \right|\leqslant \frac{ 2 (4k)^{n-i}q_k} {q_{k+1}} \cdotp
\]
Hence for sufficiently large $k$ we have 
\[
\Vert q_k x_i\Vert \leqslant 
\frac{ 2 (4k)^{n-i}q_k} {q_{k+1}} 
=
\frac{(4k)^{n-i+1} q_k} {2kq_{k+1}} \leqslant 
 \frac 1 k\Vert q_k x_{i-1}\Vert 
\]
for $2\leqslant i\leqslant n$ and 
\[
\Vert q_k x_1\Vert \leqslant \frac{ 2 (4k)^{n-1}q_k} {q_{k+1}}\leqslant \rme^{-q_k^4}.
\]
\end{example}
From Theorem \ref{StrongSchanuelExplicit}, it follows that the numbers $x_1,\dots,x_n,\rme^{x_1},\dots,\rme^{x_n}$ are algebraically independent over $\Q$.

The proof of Theorem \ref{StrongSchanuelExplicit} rests on three preliminary lemmas. The first one is a special case of a Durand's result \cite{D}, quoted in \cite[Theorem p.~226]{W1}.

\begin{lemma}[A.~Durand]\label{Lemme:Durand}
	Let $x_1,\dots,x_n$ be real numbers with $n\geqslant 1$. Assume that for any integer $k\geqslant 1$ there exists an integer $q$ such that 
\[ 
	0<k^{n-1}\Vert qx_n\Vert\leqslant 
	k^{n-2}\Vert qx_{n-1}\Vert\leqslant 
	\cdots
	\leqslant 
	k \Vert qx_2\Vert\leqslant \Vert qx_1\Vert\leqslant q^{-k}.
\] 
	Then $x_1,\dots,x_n$ are algebraically independent. 
\end{lemma}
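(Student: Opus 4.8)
\medskip
\noindent\textbf{Plan of proof.}
I would argue by induction on $n$, the case $n=1$ being the classical fact that a Liouville number is transcendental, hence algebraically independent over $\Q$. So assume $n\geq 2$ and that the lemma is known for $n-1$. Given $k\geq 1$, write $q=q_k$ for the integer supplied by the hypothesis, let $p_i=p_{i,k}\in\Z$ be the nearest integer to $qx_i$, and put $y_i=p_i/q$, $\delta_i=y_i-x_i$, so that $|\delta_i|=\Vert qx_i\Vert/q$. Dividing the hypothesis by $q$ turns it into
\[
0<k^{n-1}|\delta_n|\leq k^{n-2}|\delta_{n-1}|\leq\cdots\leq k|\delta_2|\leq|\delta_1|\leq q^{-k-1}:
\]
thus $\delta_1\neq 0$, and $|\delta_i|\leq k^{-(i-1)}|\delta_1|$ for every $i$, so that $\delta_1$ is by far the largest of the perturbations --- and exploiting this gap is the whole point.

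Two inputs are needed before the main step. First, $q_k\to\infty$: otherwise some positive integer would coincide with $q_k$ for infinitely many $k$, forcing $\Vert q_kx_n\Vert\leq k^{1-n}\Vert q_kx_1\Vert\leq k^{1-n}/2$ to be a fixed positive number that tends to $0$, which is absurd. Second, the $(n-1)$-tuple $(x_2,\dots,x_n)$ again satisfies the hypothesis of the lemma (with $n-1$ in place of $n$): divide the subchain $k^{n-1}\Vert qx_n\Vert\leq\cdots\leq k\Vert qx_2\Vert$ through by $k$ and note $\Vert qx_2\Vert\leq\Vert qx_1\Vert\leq q^{-k}$. By the inductive hypothesis $x_2,\dots,x_n$ are therefore algebraically independent over $\Q$.

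Suppose now, towards a contradiction, that $x_1,\dots,x_n$ are algebraically dependent, and pick $P\in\Z[X_1,\dots,X_n]$ nonzero of minimal total degree $d\geq 1$ vanishing at $(x_1,\dots,x_n)$. Because $x_2,\dots,x_n$ are algebraically independent, $P$ must involve the variable $X_1$; hence (characteristic zero) $\partial P/\partial X_1$ is a nonzero polynomial of total degree $<d$, and by minimality of $d$ it does not vanish at $(x_1,\dots,x_n)$, i.e.\ $c:=(\partial P/\partial X_1)(x_1,\dots,x_n)\neq 0$. Next comes a Liouville integrality estimate: $q^{d}P(y_1,\dots,y_n)$ is a rational integer, while the mean value inequality, together with $P(x_1,\dots,x_n)=0$, gives $|P(y_1,\dots,y_n)|\leq C\sum_{i=1}^{n}|\delta_i|\leq Cn\,q^{-k-1}$ with $C$ independent of $k$; hence $|q^{d}P(y_1,\dots,y_n)|\leq Cn\,q^{\,d-1-k}<1$ for $k$ large (recall $q=q_k\to\infty$), which forces $P(y_1,\dots,y_n)=0$ for all sufficiently large $k$. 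A Taylor expansion of the polynomial $P$ at $(x_1,\dots,x_n)$, using $P(x_1,\dots,x_n)=P(y_1,\dots,y_n)=0$, then yields for all large $k$
\[
0=\sum_{i=1}^{n}\frac{\partial P}{\partial X_i}(x_1,\dots,x_n)\,\delta_i+O\bigl(|\delta_1|^{2}\bigr),
\]
with implied constant depending only on $P$ and $n$. Now the terms with $i\geq 2$ are $O(|\delta_2|)=O(|\delta_1|/k)$, and $|\delta_1|^{2}\leq q^{-k-1}|\delta_1|\leq|\delta_1|/k$; dividing the displayed identity through by $|\delta_1|\neq 0$ gives $|c|\leq C'/k$ for all large $k$, which contradicts $c\neq 0$. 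This completes the induction.

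The only genuinely delicate step is the one that makes the staggered hypothesis work: one must guarantee that the partial derivative $c$ attached to the \emph{largest} perturbation $\delta_1$ is nonzero, since it is precisely the relation $\sum_i(\partial_iP)(x)\,\delta_i=O(|\delta_1|^2)$ that, once $c\neq 0$, forces $|\delta_1|$ to be non-negligibly large and collides with $|\delta_1|\geq k|\delta_2|\geq\cdots$. Securing $c\neq 0$ is exactly why $P$ is chosen of minimal total degree and why the inductive hypothesis is applied to $(x_2,\dots,x_n)$. The rest --- the divergence of $q_k$, the integrality of $q^{d}P(y_1,\dots,y_n)$, and the estimation of the error terms --- is routine.
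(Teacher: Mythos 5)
The paper does not actually prove this lemma: it is quoted as a special case of Durand's criterion \cite{D} (via \cite[Theorem p.~226]{W1}) and used as a black box, so there is no in-paper argument to compare yours against. Your proof is, in substance, the standard proof of Durand's criterion and it checks out. The key steps are all sound: the sub-tuple $(x_2,\dots,x_n)$ inherits the hypothesis with the same $q$ (divide the inner chain by $k$ and use $\Vert qx_2\Vert\leqslant\Vert qx_1\Vert\leqslant q^{-k}$), so induction gives the algebraic independence of $x_2,\dots,x_n$; taking $P$ of minimal total degree $d$ then forces $c=(\partial P/\partial X_1)(x_1,\dots,x_n)\neq0$, because $\partial P/\partial X_1$ is a nonzero polynomial of degree $<d$; the integrality of $q^{d}P(p_1/q,\dots,p_n/q)$ combined with the Liouville bound $|P(p_1/q,\dots,p_n/q)|\leqslant Cnq^{-k-1}$ forces this value to vanish for large $k$; and the staggered inequalities $|\delta_i|\leqslant k^{-(i-1)}|\delta_1|$ make the term $c\,\delta_1$ dominate the Taylor expansion, giving $|c|\leqslant C'/k+C''q^{-k-1}$ and hence the contradiction $c=0$. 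This is exactly the mechanism the hypothesis is designed for.

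One caveat, which is really a defect of the statement rather than of your argument: as literally written the lemma fails for $n=1$, since $q=1$ satisfies $0<\Vert qx_1\Vert\leqslant q^{-k}=1$ for every irrational $x_1$ and every $k$, yet not every irrational is transcendental. Your base case (``a Liouville number is transcendental'') tacitly requires $q\geqslant2$, equivalently $q_k\to\infty$, which for $n=1$ does not follow from the hypothesis alone, whereas your argument for the unboundedness of $(q_k)$ only works when $n\geqslant2$. The clean fix is either to read ``integer $q$'' as ``integer $q\geqslant2$'' (surely the intent, and what holds in the paper's application, where $q_{k+1}=3^{q_k^4}$), or to carry the divergence of $(q_k)$ through the induction as an explicit extra hypothesis, noting that the sub-tuple reuses the same sequence. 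With that understanding your proof is complete and correct.
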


\medskip
The next auxiliary lemma is a lower bound for a non homogeneous linear form in one logarithm; there are several such estimates, we use a special case of \cite[Theorem 9.1]{W2}. We denote by $\rmh$ the absolute logarithmic height of an algebraic number.

\begin{lemma}\label{Lemme:fl1log}
	Let $\alpha$ be a nonzero algebraic number, $\log\alpha$ a logarithm of $\alpha$, $\beta$ a non zero algebraic number, $D$ the degree of the field $\Q(\alpha,\beta)$, and $A$, $B$ real numbers satisfying 
\[ 
	\log A\geqslant \max\{1, \rmh(\alpha), |\log \alpha|\},\qquad
	B\geqslant \max\{\rme, \rmh(\beta), D\log A\}.
\] 
	Then
\[ 
	|\beta-\log\alpha|\geqslant \exp\{-2^{26}D^3\log A\log B\}.
\] 
\end{lemma}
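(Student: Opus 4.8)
The plan is to derive this estimate by specializing the general effective lower bound for linear forms in logarithms of algebraic numbers recorded as \cite[Theorem 9.1]{W2} to the case of a single logarithm.

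First I would check that the number $\Lambda := \beta - \log\alpha$ is nonzero. If $\Lambda = 0$ then $\alpha = \rme^{\beta}$ would be a nonzero algebraic number with $\beta$ a nonzero algebraic number, so that $\rme^{\beta}$ is algebraic, contradicting the Hermite--Lindemann theorem. Hence $\Lambda\neq 0$ and an effective lower bound for $|\Lambda|$ is meaningful.

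Next I would apply \cite[Theorem 9.1]{W2} to the non homogeneous linear form $\beta_0 + \beta_1\log\alpha_1$ in $n=1$ logarithm, with $\alpha_1 = \alpha$, $\beta_0 = \beta$ and $\beta_1 = -1$. The conclusion of that theorem has the shape
\[
|\beta_0 + \beta_1\log\alpha_1| \geqslant \exp\bigl\{-C(1)\, D^{3}\, \log A_1\,\log B\bigr\},
\]
where $\log A_1$ is any real number majorizing $\max\{1,\rmh(\alpha_1),|\log\alpha_1|\}$, the quantity $B$ majorizes a combination of $D$, the heights of $\beta_0$ and $\beta_1$, and $\log A_1$, and the exponent of $D$ is $n+2 = 3$. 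The monotonicity hypotheses imposed in the lemma on $A$ and $B$ are exactly those needed to let $A$ play the role of $A_1$ and $B$ that of the parameter $B$ in the cited theorem; here $\rmh(\beta_1) = \rmh(-1) = 0$, so $\beta_1$ contributes nothing beyond its house $1$.

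The remaining point is to track the numerical constant: with these choices the auxiliary quantities occurring in \cite[Theorem 9.1]{W2} reduce to absolute constants, and one verifies that the resulting constant is at most $2^{26}$, which yields
\[
|\beta - \log\alpha| \geqslant \exp\{-2^{26} D^3 \log A\log B\}.
\]
I expect this last step --- the bookkeeping of constants together with a careful reading of the exact definitions of $A$ and $B$ in \cite[Theorem 9.1]{W2} --- to be the only delicate part: one must make sure that the conditions $\log A\geqslant\max\{1,\rmh(\alpha),|\log\alpha|\}$ and $B\geqslant\max\{\rme,\rmh(\beta),D\log A\}$ genuinely suffice to invoke the theorem, and that after restricting to a single logarithm no residual logarithmic loss (an extra $\log\log$ factor, or dependence on an Archimedean parameter) survives. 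Everything else is a direct quotation of the cited result.
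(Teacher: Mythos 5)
Your proposal matches the paper exactly: the paper gives no independent proof of this lemma, but simply quotes it as a special case of \cite[Theorem 9.1]{W2}, which is precisely the specialization (one logarithm, $\beta_1=-1$, nonvanishing via Hermite--Lindemann, constant $C(1)\leqslant 2^{26}$) that you describe. Your added remarks on checking $\Lambda\neq 0$ and on the bookkeeping of the parameters $A$ and $B$ are correct and consistent with the cited source.
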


\medskip
We will need also an upper bound for the distance of a complex number $\theta$ to the nearest zero of a polynomial $F\in\Z[X]$ in terms of $|F(\theta)|$. The next lemma is due to N.I. Fel'dman \cite[Lemma 15.13]{W2}. Here, $\rmH$ denotes the usual height of a polynomial, i.e. the maximum of the absolute values of its coefficients.

\begin{lemma}\label{Lemme:Feldman}
	Let $F$ be a nonzero polynomial in $\Z[X]$ of degree $D$. Let $\theta$ be a complex number, $\alpha$ a root of $F$ at minimal distance of $\theta$ and $\ell$ the multiplicity of $\alpha$ as a root of $F$. Then
\[ 
	|\theta-\alpha|^\ell \leqslant D^{3D-2} \rmH(F)^{2D}|F(\theta)|.
\] 
\end{lemma}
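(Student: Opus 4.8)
The plan is to write $F(X)=a\prod_{i=1}^{D}(X-\alpha_i)$ with $a\in\Z\smallsetminus\{0\}$ its leading coefficient, the roots $\alpha_i\in\C$ listed with multiplicities and ordered so that $\alpha=\alpha_1=\dots=\alpha_\ell$ is a root of $F$ nearest to $\theta$; put $\rho=|\theta-\alpha|$, so that $|\theta-\alpha_i|\geqslant\rho$ for all $i$. One may assume $\rho<1$, since otherwise $|F(\theta)|\geqslant\prod_i|\theta-\alpha_i|\geqslant\rho^{D}\geqslant\rho^{\ell}$ already settles the claim. The elementary first step is the inequality $|\theta-\alpha_i|\geqslant\frac12|\alpha-\alpha_i|$, valid for every $i$: distinguish the case $|\alpha-\alpha_i|\geqslant2\rho$, where $|\theta-\alpha_i|\geqslant|\alpha-\alpha_i|-\rho$, from the case $|\alpha-\alpha_i|<2\rho$, where $|\theta-\alpha_i|\geqslant\rho$. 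Combined with the identity $F^{(\ell)}(\alpha)=\ell!\,a\prod_{i>\ell}(\alpha-\alpha_i)$ — read off from the Taylor expansion of $F$ at $\alpha$, and nonzero since $\alpha$ has multiplicity exactly $\ell$ — this gives
\[
|F(\theta)|=|a|\,\rho^{\ell}\prod_{i>\ell}|\theta-\alpha_i|\;\geqslant\;2^{-(D-\ell)}|a|\,\rho^{\ell}\prod_{i>\ell}|\alpha-\alpha_i|\;=\;\frac{\rho^{\ell}}{2^{D-\ell}\,\ell!}\,\bigl|F^{(\ell)}(\alpha)\bigr|,
\]
the factor $|a|$ cancelling. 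So the problem reduces to proving the lower bound $\bigl|F^{(\ell)}(\alpha)\bigr|\geqslant\ell!\,2^{D-\ell}\,D^{-(3D-2)}\,\rmH(F)^{-2D}$.

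This last bound is a Liouville-type inequality for the nonzero algebraic number $F^{(\ell)}(\alpha)$. Put $R:=F^{(\ell)}\in\Z[X]$; it has degree $D-\ell$ and $\rmH(R)\leqslant\frac{D!}{(D-\ell)!}\rmH(F)$. Let $P\in\Z[X]$ be the primitive minimal polynomial of $\alpha$, of degree $d$ and leading coefficient $p$, with conjugates $\alpha=\alpha^{(1)},\dots,\alpha^{(d)}$. Since $P\mid F$ (Gauss's lemma), each conjugate $\alpha^{(j)}$ is a root of $F$ of the same multiplicity $\ell$, hence $R(\alpha^{(j)})\neq0$ for every $j$; therefore $\mathrm{Res}(P,R)=p^{D-\ell}\prod_{j=1}^{d}R(\alpha^{(j)})$ is a nonzero rational integer, so of absolute value $\geqslant1$. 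Combining this with the trivial upper bounds $|R(\alpha^{(j)})|\leqslant(D-\ell+1)\,\rmH(R)\,\max(1,|\alpha^{(j)}|)^{D-\ell}$ for $j\geqslant2$ and with $\prod_{j=1}^{d}\max(1,|\alpha^{(j)}|)=M(P)/|p|$, the powers of $p$ cancel and one obtains
\[
\bigl|F^{(\ell)}(\alpha)\bigr|\;\geqslant\;\frac{1}{\bigl((D-\ell+1)\,\rmH(F^{(\ell)})\bigr)^{d-1}\,M(P)^{D-\ell}}.
\]

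To conclude I would substitute $\rmH(F^{(\ell)})\leqslant\frac{D!}{(D-\ell)!}\rmH(F)$ and, using $P\mid F$ together with Landau's inequality, $M(P)\leqslant M(F)\leqslant\sqrt{D+1}\,\rmH(F)$, and then bring in the key constraint $d\ell\leqslant D$ (the $d$ conjugates of $\alpha$ each occur with multiplicity $\ell$ in $F$). The exponent of $\rmH(F)$ that appears is $(d-1)+(D-\ell)\leqslant 2D-\ell-1\leqslant 2D-2$, comfortably $\leqslant 2D$; and the remaining purely numerical factor, estimated with the help of $d-1\leqslant D/\ell$, stays below $D^{3D-2}$ for $D\geqslant2$ (the case $D=1$ being immediate). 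Assembling the three displayed inequalities then yields $\rho^{\ell}\leqslant D^{3D-2}\rmH(F)^{2D}|F(\theta)|$.

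\emph{Main obstacle.} The overall scheme is standard; the work is in the constants. Two points need care. First, that $\mathrm{Res}(P,F^{(\ell)})\neq0$: this rests precisely on conjugate roots of an integer polynomial having equal multiplicity, and one also wants the leading coefficient $p$ to drop out of the final estimate, for otherwise an extra power of $\rmH(F)$ would appear and the exponent $2D$ would be lost. Second, the numerical bookkeeping of the last step, where the constraint $d\ell\leqslant D$ must be exploited to prevent the power of $D$ from blowing up (a careless bound would produce something like $D^{D^{2}}$). The degenerate cases $\ell=D$ (where $F=a(X-\alpha)^{D}$, $\alpha\in\Q$, $d=1$ and $F^{(\ell)}$ is a nonzero constant) and small $D$ are checked directly and cause no trouble; alternatively, the whole estimate can be extracted from a sharp form of Liouville's inequality available in the literature.
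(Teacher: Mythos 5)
Your proof is correct, but there is nothing in the paper to compare it with: the lemma is quoted as Fel'dman's and referred to \cite[Lemma 15.13]{W2}, so your argument supplies a self-contained proof where the text gives only a citation. Your route is the standard one for such estimates: reduce to $\rho=|\theta-\alpha|<1$, compare $|F(\theta)|=|a|\rho^{\ell}\prod_{i>\ell}|\theta-\alpha_i|$ with $\rho^{\ell}\,|F^{(\ell)}(\alpha)|/(2^{D-\ell}\ell!)$ via the inequality $|\theta-\alpha_i|\geqslant\frac12|\alpha-\alpha_i|$, and then bound $|F^{(\ell)}(\alpha)|$ from below by a resultant (Liouville-type) argument. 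Every step checks out, including the two points you single out: $\mathrm{Res}(P,F^{(\ell)})\neq0$ because conjugate roots of $F\in\Q[X]$ have equal multiplicity (from $P^{\ell}\mid F$, $P^{\ell+1}\nmid F$), and the powers of the leading coefficient $p$ of $P$ do cancel against $\prod_j\max(1,|\alpha^{(j)}|)^{D-\ell}=(M(P)/|p|)^{D-\ell}$. The only work you leave undone is the final numerical bookkeeping, which you flag yourself; I confirm it closes. The constant to control is $2^{D-\ell}\,\ell!\,\bigl((D-\ell+1)D!/(D-\ell)!\bigr)^{d-1}(D+1)^{(D-\ell)/2}$; using $\ell!\leqslant D^{\ell-1}$, $(D-\ell+1)D!/(D-\ell)!\leqslant D^{\ell+1}$, $(\ell+1)(d-1)\leqslant D+d-\ell-1$ (from $d\ell\leqslant D$) and $2\sqrt{D+1}\leqslant D$ for $D\geqslant5$, the total exponent of $D$ is at most $2D+d-\ell-2\leqslant 3D-3$; the remaining cases $D=2,3,4$ (worst subcase $\ell=1$, $d=D$, where one needs $(2\sqrt{D+1})^{D-1}\leqslant D^{D}$) are verified directly. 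So the stated bound $D^{3D-2}\rmH(F)^{2D}$ is attained, with room to spare in the exponent of $\rmH(F)$, for which your argument yields $2D-2$.
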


\medskip

We can now prove the main result of this section.

\begin{proof}[Proof of Theorem \ref{StrongSchanuelExplicit}]
	Under the assumptions of Theorem \ref{StrongSchanuelExplicit}, assume that there exists a nonzero polynomial $f\in\Z[X_1,\dots,X_n,Y_1,\dots,Y_n]$ such that
\[ 
	f(x_1,\dots,x_n,\rme^{x_1},\dots,\rme^{x_n})=0.
\] 
	Let $L$ be the maximum of the partial degrees of $f$ with respect to the $2n$ variables. 
	Write
\[ 
	f(\uX,\uY)=
	\sum_{\uell}
	\sum_{\um}
	f_{\uell,\um}\uX^{\uell}
	\uY^{\um}
\] 
	where $\uell$ stands for $(\ell_1,\dots,\ell_n)$ with $0\leqslant \ell_i\leqslant L$ ($1\leqslant i\leqslant n)$, 
	$\um$ for $(m_1,\dots,m_n)$ with $0\leqslant m_j\leqslant L$ ($1\leqslant j\leqslant n)$, 
	$\uX^{\uell}$ for $X_1^{\ell_1}\cdots X_n^{\ell_n}$
	and 
	$\uY^{\um}$ for $Y_1^{m_1}\cdots Y_n^{m_n}$.
	
Using the assumption of Theorem \ref{StrongSchanuelExplicit} on the existence of a sequence $(q_k)_{k\geqslant 0}$, we introduce 
$n$ sequences $(p_{k1})_{k\geqslant 1},\dots,(p_{kn})_{k\geqslant 1}$ of integers such that, for $k\geqslant 1$ and $1\leqslant i\leqslant n$,
\[ 
	|q_kx_i-p_{ki}| =\Vert q_kx_i\Vert.
\] 
Lemma \ref{Lemme:Durand} implies that the numbers $x_1,\ldots,x_n$ are algebraically independent.
 	
	We substitute $p_{ki}/q_k$ to $X_i$ and $\rme^{p_{ki}/q_k}$ to $Y_i$ in $f(\uX,\uY)$: set
\[ 
	\xi_k:=f(\uup_k/q_k,\rme^{\uup_k/q_k})=
	\sum_{\uell}
	\sum_{\um}
	f_{\uell,\um}(\uup_k/q_k)^{\uell}
	(\rme^{\uup_k/q_k})^{\um},
\] 
	where $\uup_k/q_k$ stands for $(p_{k1}/q_k,\dots,p_{kn}/q_k)$ 
	and 
	$\rme^{\uup_k/q_k}$ for $(\rme^{p_{k1}/q_k},\dots,\rme^{p_{kn}/q_k})$. Then
\[
\xi_k=
	\sum_{\uell}
	\sum_{\um}
	f_{\uell,\um}
	\left((\uup_k/q_k)^{\uell}
	(\rme^{\uup_k/q_k})^{\um}-\ux^{\uell}(\rme^{\ux})^{\um}
	\right)
\]	
where $\ux^{\uell}=x_1^{\ell_1}\cdots x_n^{\ell_n} $ and $(\rme^{\ux})^{\um}=\rme^{m_1x_1+\cdots+m_nx_n}$. One deduces 
\begin{equation}\label{Equation:|xii_ell}
|\xi_k|\leqslant c_1 \psi(q_k) 
\end{equation} 
with a constant $c_1$ independent on $k$. 
	The number $\xi_k$ can be written as $q_k^{-nL}F_k(\rme^{1/q_k})$ where $F_k\in\Z[T]$ is the polynomial 
\[ 
	F_k(T)=\sum_{\uell}
	\sum_{\um}
	f_{\uell,\um}\uup_k^{\uell} q_k^{nL-\ell_1-\cdots-\ell_n}
	T^{p_{k1} m_1+\cdots+p_{kn} m_n}.
\] 
	We claim that, for sufficiently large $k$, the exponents $p_{k1} m_1+\cdots+p_{kn} m_n$ ($0\leqslant m_i\leqslant L$) are pairwise distinct. Indeed, otherwise, since the set of $(m_1,\dots,m_n)$ has at most $(L+1)^n$ elements (independent on $k$), we would have a non trivial linear relation 
\[ 
	a_1 p_{k1} +\cdots+a_np_{kn} =0
\] 
	with $-L\leqslant a_i\leqslant L$, ($1\leqslant i\leqslant n$) valid for infinitely many $k$. Dividing by $q_k$ and taking the limit of the left hand side as $k\to\infty$ we would deduce 
\[ 
	a_1 x_1 +\cdots+a_nx_n =0, 
\] 
	which is not true since $x_1,\ldots,x_n$ are algebraically independent, hence linearly independent.

	We claim that, for sufficiently large $k$, the polynomial $ F_k(T)$ is not zero. Otherwise, we would have 
\[ 
	\sum_{\uell} 
	f_{\uell,\um}
	p_{k1}^{\ell_1}\cdots p_{kn}^{\ell_n}=0
\] 
	for all $(m_1,\dots,m_n)$ with $0\leqslant m_i\leqslant L$. Again, by taking the limit as $k\to\infty$, the relations
\[ 
	\sum_{\uell} 
	f_{\uell,\um}x_1^{\ell_1}\cdots x_n^{\ell_n}=0
\] 
	would give a contradiction with the algebraic independence of $x_1,\ldots,x_n$.
	
	Let $\alpha_k$ be a root of $ F_k(T)$ at a nearest distance of $\rme^{1/q_k}$. The degree of $\alpha_k$ is at most $c_2q_k$ and its logarithmic height at most $c_3\log q_k$, with positive constants $c_2$ and $c_3$ which do not depend on $k$. Using \eqref{Equation:|xii_ell} together with Lemma \ref{Lemme:Feldman} we deduce 
\[ 
\left| \rme^{1 /q_k}-\alpha_k\right|\leqslant c_4\psi(q_k),
\]
which implies that $\alpha$ has a logarithm which satisfies 
\begin{equation}\label{Equation:upperbound}
\left|\frac 1 {q_k}-\log \alpha_k\right|\leqslant c_5\psi(q_k).
\end{equation}
On the other hand, Lemma \ref{Lemme:fl1log} with $D=c_2q_k$, $A=q_k^{c_3}$, $B=q_k$, yields 
\begin{equation}\label{Equation:lowerbound}
	\left|\frac 1 {q_k}-\log \alpha_k\right|\geqslant \exp\{-c_5 q_k^3(\log q_k)^2\}.
\end{equation}
	The estimates \eqref{Equation:upperbound} and \eqref{Equation:lowerbound} are not compatible, hence the result. 
 \hfill $\square$
 \end{proof}

%-----------------------------------------
\section{Elliptic and quasi--elliptic functions}\label{S:functions}

Let $\Omega = \Z \omega_1 + \Z \omega_2$ be a lattice in $\C$ with elliptic invariants $g_2,g_3$. 
We consider the following Weierstrass functions (see for instance \cite{M}): the Weierstrass sigma function
\begin{equation}\label{Equation:sigmaDef}
\sigma(z)=z\prod_{\omega\in\Omega\smallsetminus\{0\}}\left(1-\frac z \omega\right) \exp\left( \frac z \omega + \frac {z^2}{2\omega^2}\right),
\end{equation}
the Weierstrass zeta function $\zeta=\sigma'/\sigma$ and the Weierstrass elliptic function $\wp=-\zeta' $.
 
Further, for $u\in\C\smallsetminus\Omega$, we introduce the Serre function
\[
f_u(z) = \frac{\sigma(z+u)}{\sigma(z)\sigma(u)} \rme^{-\zeta(u) z }.
\]
The periods of the Weierstrass elliptic function $\wp$ are elliptic integrals of the first kind. 
 The Weierstrass zeta function $\zeta$ has quasi--periods $\eta=\zeta(z+\omega)-\zeta(z)$ which are given by elliptic integrals of the second kind. 
Serre functions $f_u$ were introduced in an appendix of \cite{Wald79a} to investigate elliptic integrals of the third kind. 

For $\omega\in\Omega$, we have
$\wp(z+\omega)=\wp(z)$,
 $\zeta(z+\omega)=\zeta(z)+\eta(\omega) $ and 
\begin{equation}\label{Equation:sigma} 
\sigma(z+\omega)=\epsilon(\omega)\sigma(z)\exp\left(\eta(\omega)\left(z+\frac \omega 2\right)\right)
\end{equation}
with 
\[ 
\epsilon(\omega)=
\begin{cases}1
&
\hbox{if $\omega/2\in\Omega$},
\\ -1
&
\hbox{ if $\omega/2\not\in\Omega$}.
\end{cases}
\] 
Further, 
\[
f_u(z+\omega)= f_u(z)\exp\left(\eta(\omega)u-\omega\zeta(u)\right).
\]

The quasi--periodicity of the Weierstrass zeta function defines 
 a $\Z$--linear map (homomorphism of abelian groups) 
 \[
 \begin{matrix}
 \eta:& \Omega& \to& \C
 \\
 & \omega &\mapsto & \eta(\omega) .
 \end{matrix}
 \] 
 We set $\eta_i= \eta(\omega_i)$ for $i=1,2$. We will choose the order of $\omega_1,\omega_2$ with positive imaginary part of $\omega_2/\omega_1$, so that Legendre relation \cite[Exercise 20.33]{M} is 
 \begin{equation}\label{Equation:Legendre}
 \omega_2\eta_1-\omega_1\eta_2=2\pi\rmi.
 \end{equation}
 This relation holds for a basis $(\omega_1,\omega_2)$, while for a pair of two linearly independent periods $\omega'_1,\omega'_2$ we have
\begin{equation}\label{Equation:LegendreGeneralise}
 \omega'_2\eta(\omega'_1)-\omega'_1\eta(\omega'_2)\in 2\pi\rmi\Z\smallsetminus\{0\}.
\end{equation}
For $u\in\C\smallsetminus\Omega$ and $\omega\in\Omega$, we define 
\begin{equation}\label{Equation:lambdaDef}
\lambda(u,\omega)=\eta(\omega)u-\omega\zeta(u),
\end{equation}
so that the quasi--periodicity relation of $f_u$ is
\begin{equation}\label{Equation:lambda}
f_u(z+\omega)= f_u(z)\exp\bigl(\lambda(u,\omega)\bigr) \qquad (\omega\in\Omega).
\end{equation}
From Legendre relation \ref{Equation:Legendre} we deduce
\begin{equation}\label{Equation:ConsequenceLegendre}
	\lambda(u,\omega_1)\omega_2-\lambda(u,\omega_2)\omega_1=(\omega_2\eta_1-\omega_1\eta_2)u=2\pi\rmi u.
\end{equation}

\medskip

%-----------------------------------------
\section{Strong Schanuel Property for values of algebraically independent functions}%\label{S:StrongSchanuelProperty} 

\begin{proof}[Proof of Proposition \ref{Prop:SSPIF}]
We will use the following facts.
\\
$\bullet$	If $F$ is a nonzero meromorphic function in $\C^n$, then the set $Z(F)$ of zeroes of $F$ in $\C^n$ has Lebesgue measure zero. 
\\
$\bullet$	A countable union of sets of Lebesgue measure zero has Lebesgue measure zero.
\\
$\bullet$	The set of polynomials in $nt$ variables with coefficients in $K$ is countable. 	
	
	For $P$ a nonzero polynomial in $nt$ variables with coefficients in $K$, define a nonzero meromorphic function in $\C^n$ by 
\[
F_P(z_1,\dots,z_n)=P\bigl( (f_j(z_i) )_{\atop{1\leqslant i\leqslant n}{1\leqslant j\leqslant t}}\bigr)
\]
and let $Z(F_P)\subset \C^n$ be the set of zeroes of $F_P$. The set of tuples $(z_1,\dots,z_n)\in \C^n$ such that the $nt$ numbers 
\eqref{Equation:SSPIF} 	are algebraically dependent over $K$ is the union of all $Z(F_P)$ with $P\in K[(X_{ij})_{
	\atop{1\leqslant i\leqslant n}{1\leqslant j\leqslant t}}]\smallsetminus\{0\}$. Hence the result. 
 \hfill $\square$
 \end{proof}

%-----------------------------------------
\section{Algebraic independence of periodic and quasi--periodic functions}%\label{S:AlgebraicIndependenceFunctions} 

In this section we prove Theorem \ref{Theorem:AlgebraicIndependenceFunctions}. We need preliminary results.

%%%%%%%%%%%%%%%%%%%%%%%%%
\subsection{Auxiliary results}%\label{SS:AuxiliaryResults}

In a multiplicative $\Z$--module, which is nothing else than an abelian group written multiplicatively, linear dependence is called {\em multiplicative dependence}. For instance 
the set $(\C^\times)^\Omega$ of sequences of nonzero complex numbers indexed by $\Omega$ is a multiplicative $\Z$--module; 
elements $\bfx_1,\dots,\bfx_m$ of $(\C^\times)^\Omega$, where $\bfx_i=(x_{i,\omega})_{\omega\in\Omega}$, are multiplicatively dependent if there exists $(h_1,\dots,h_m)\in\Z^m\smallsetminus\{0\}$ such that 
\[
\bfx_1^{ h_1}\cdots \bfx_m^{h_m}=1,
\]
that is
\[
x_{1,\omega}^{ h_1}\cdots x_{m,\omega}^{h_m}=1 \quad \hbox{ for all } \quad \omega\in\Omega.
\]
For $\bfz=(z_\omega)_{\omega\in\Omega}\in \C^\Omega $, $\exp(\bfz)\in(\C^\times)^\Omega$ denotes $\bigl(\exp(z_\omega)\bigr)_{\omega\in\Omega}$. 
For $\bfz_1,\dots,\bfz_m$ in $ \C^\Omega $, the elements 
 $\exp(\bfz_1),\dots,\exp(\bfz_m)$ are multiplicatively dependent in $(\C^\times)^\Omega$ if and only if there exists $(h_1,\dots,h_m)\in\Z^m\smallsetminus\{0\}$ such that 
\[
h_1z_{1,\omega}+\cdots +h_mz_{m,\omega}\in 2\pi \rmi \Z \quad \hbox{ for all } \quad \omega\in\Omega.
\]
In this case, let 
\[
h_0(\omega)=\frac 1 {2\pi \rmi} \left(h_1 z_{1,\omega}+\cdots +h_m z_{m,\omega}\right).
\]
Then, for $(a,b)\in\Z^2$, 
\[
h_0(a\omega_1+b\omega_2)=ah_0(\omega_1)+bh_0(\omega_2).
\]

We introduce two elements of $\C^\Omega$:
\[
\bfomega=(\omega)_{\omega\in\Omega}, \quad 
\bfeta =(\eta(\omega) )_{\omega\in\Omega}.
\]
In the $\C$--vector space $\C^\Omega$, for $t\in \C$ and $\bfx=(x_\omega)_{\omega\in\Omega}\in\C^\Omega$, we have 
\[
t \bfx=\bfx t=(tx_\omega)_{\omega\in\Omega}\in\C^\Omega.
\] 
Based on \eqref{Equation:lambdaDef}, for $u \in \C \smallsetminus \Omega$, we define an element 
in $\C^\Omega$ as 
\[
\bflambda(u,\bfomega)= \bfeta u-\bfomega\zeta(u)=\bigl(\lambda(u,\omega)\bigr)_{\omega\in\Omega}
=\bigl(\eta(\omega) u- \omega\zeta(u)\bigr)_{\omega\in\Omega}.
\] 

Recall that for complex numbers $t_1,\dots,t_r$, the $r$ numbers $\rme^{t_1},\dots,\rme^{t_r}$ are multiplicatively independent if and only if the 
$r+1$ numbers $2\pi \rmi, t_1,\dots,t_r$ are linearly independent over $\Q$. 

Our first auxiliary result is a generalisation of Vandermonde determinants.

 \begin{lemma}\label{Lemma:Vandermonde}
 Let $(r_t)_{t\geqslant 0}$ be a sequence of monic polynomials in $\C[T]$ where the degree of $r_t$ is $t$. 
Let $t_1,\dots,t_m$ positive integers and $w_1,\dots,w_m$ complex numbers. Set $D=t_1+\cdots+t_m$. Then the determinant of the $D\times D$ matrix 
\[
M:=\bigl(
r_t(a)w_j^a\bigr)_{\atop{0\leqslant a< D}{0\leqslant t<t_j,\; 1\leqslant j\leqslant m}}
\]
is
\begin{equation}\label{Equation:determinant}
k(t_1,\dots,t_m) \prod_{j=1}^m w_i^{t_j(t_j-1)/2}
\prod_{1\leqslant i<j\leqslant m}(w_j-w_i)^{ t_it_j}
\end{equation}
with
\[
k(t_1,\dots,t_m)=\prod_{j=1}^m\prod_{t=1}^{t_j-1} t!.
\]
\end{lemma}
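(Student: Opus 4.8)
The plan is to reduce the statement, by two successive rounds of elementary column operations, to the classical confluent (generalized) Vandermonde determinant. \emph{Step 1 (removing the $r_t$).} Since $r_t$ is monic of degree $t$, write $r_t(T)=\sum_{s=0}^{t}c_{t,s}T^{s}$ with $c_{t,t}=1$. For each fixed $j$, the column of $M$ indexed by $(t,j)$ is $\sum_{s=0}^{t}c_{t,s}$ times the column indexed by $(s,j)$ of the matrix $M_0:=\bigl(a^{t}w_j^{a}\bigr)_{0\le a<D,\ 0\le t<t_j,\ 1\le j\le m}$; since the $c_{t,s}$ are independent of $j$, this means $M=M_0C$ with $C$ block--diagonal, each block upper triangular with $1$'s on the diagonal. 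Hence $\det M=\det M_0$, and in particular $\det M$ does not depend on which monic polynomials $r_t$ are used; so I may take $r_t(T)=(T)_t:=T(T-1)\cdots(T-t+1)$, the falling factorial.

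\emph{Step 2 (extracting powers of $w_j$).} With $r_t=(T)_t$, the entry of $M$ at position $\bigl(a,(t,j)\bigr)$ is $(a)_t\,w_j^{a}=w_j^{t}\cdot\left.\frac{d^{t}}{dw^{t}}(w^{a})\right|_{w=w_j}$, because $\frac{d^{t}}{dw^{t}}(w^{a})=(a)_t\,w^{a-t}$. Pulling $w_j^{t}$ out of the column indexed by $(t,j)$ gives
\[
\det M=\left(\prod_{j=1}^{m}\prod_{t=0}^{t_j-1}w_j^{t}\right)\det N=\left(\prod_{j=1}^{m}w_j^{\,t_j(t_j-1)/2}\right)\det N ,
\]
where $N$ is the $D\times D$ matrix with rows indexed by $a\in\{0,\dots,D-1\}$, columns indexed by the pairs $(t,j)$ with $0\le t<t_j$ and $1\le j\le m$, and $(a,(t,j))$-entry $\left.\frac{d^{t}}{dw^{t}}(w^{a})\right|_{w=w_j}$. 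This $N$ is exactly the confluent Vandermonde matrix attached to the nodes $w_1,\dots,w_m$ with respective multiplicities $t_1,\dots,t_m$. (In the formula of the lemma the exponent $t_j(t_j-1)/2$ is of course borne by $w_j$.)

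\emph{Step 3 (the confluent Vandermonde determinant).} It remains to prove
\[
\det N=k(t_1,\dots,t_m)\prod_{1\le i<j\le m}(w_j-w_i)^{t_it_j},
\]
which I would do by a coalescence argument. Start from the ordinary Vandermonde identity $\det\bigl(\xi_k^{a}\bigr)_{0\le a<D,\ 1\le k\le D}=\prod_{1\le k<l\le D}(\xi_l-\xi_k)$ and relabel the $D$ variables as $y_{j,s}$ with $1\le j\le m$, $0\le s<t_j$, listed by increasing $j$ and, within each block, increasing $s$. Inside each block $j$, replace the columns $\bigl(y_{j,s}^{a}\bigr)_a$ ($0\le s<t_j$) by the columns whose $a$-th entry is the divided difference of $\xi\mapsto\xi^{a}$ at the nodes $y_{j,0},\dots,y_{j,s}$; the corresponding column--operation matrix is triangular with determinant $\prod_{0\le s<s'<t_j}(y_{j,s'}-y_{j,s})^{-1}$, so that the resulting matrix has determinant equal to the cross--block part $\prod_{1\le i<j\le m}\prod_{0\le s<t_i}\prod_{0\le s'<t_j}(y_{j,s'}-y_{i,s})$ of the Vandermonde product, the within--block factors having cancelled. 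Letting $y_{j,s}\to w_j$ for all $s$: the divided difference of $\xi\mapsto\xi^{a}$ at $y_{j,0},\dots,y_{j,s}$ tends to $\frac{1}{s!}\left.\frac{d^{s}}{dw^{s}}(w^{a})\right|_{w=w_j}$, so the matrix tends to the one obtained from $N$ by dividing each column $(s,j)$ by $s!$, while the cross--block product tends to $\prod_{1\le i<j\le m}(w_j-w_i)^{t_it_j}$. Comparing determinants, and using $0!=1$, yields the displayed identity with $k(t_1,\dots,t_m)=\prod_{j=1}^{m}\prod_{s=1}^{t_j-1}s!$; substituting into Step 2 proves the lemma. (Alternatively, $\det N$ is the classical confluent Vandermonde determinant and may be quoted directly.)

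The two column reductions are routine linear algebra. The part requiring care — and where I expect the only real, if mild, difficulty — is Step 3: keeping exact track of which factors of the Vandermonde product $\prod_{k<l}(\xi_l-\xi_k)$ are absorbed by the divided--difference normalization and which survive the coalescence limit, together with the standard fact that a divided difference converges to the corresponding normalized derivative as its nodes merge.
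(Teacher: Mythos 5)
Your proof is correct, but after the common first step (column operations showing that $\det M$ does not depend on the choice of the monic polynomials $r_t$) it takes a genuinely different route from the paper. The paper keeps $r_t(z)=z^t$, reads the columns through the operator $\left(z\,\rmd/\rmd z\right)^t$, shows by a Hermite--interpolation (injectivity) argument that the determinant is nonzero when the $w_j$ are distinct and nonzero, and then treats $\det M$ as a polynomial in $w_1,\dots,w_m$: it identifies the possible irreducible factors $w_j$ and $w_j-w_i$, proves divisibility by $(w_j-w_i)^{t_it_j}$ via a separate derivative argument, and pins down all exponents and the constant by homogeneity and partial--degree counts, the constant being computed as a product of ordinary Vandermonde determinants of $0,1,\dots,t_j-1$. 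You instead specialize to the falling factorials $r_t=(T)_t$, factor $w_j^{t}$ out of each column so that what remains is exactly the confluent Vandermonde matrix, and evaluate that by the standard divided--difference coalescence from the ordinary Vandermonde identity. Your route is shorter and delivers the constant $k(t_1,\dots,t_m)$ automatically from the $1/s!$ normalization of divided differences, at the cost of a limiting argument (harmless, since divided differences of monomials are polynomials in the nodes and the determinant is continuous in the entries); the paper's route stays entirely within polynomial algebra but needs the extra divisibility and degree bookkeeping. One small point to make explicit: the lemma asserts the identity for arbitrary $w_1,\dots,w_m$, so after establishing it for distinct values you should add that both sides are polynomials in $(w_1,\dots,w_m)$ and hence agree everywhere; with that sentence added, the argument is complete.
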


The matrix $M$ is 
\[
M=\bigl(M_1(w_1) \; M_2(w_2) \; \dots M_m(w_m) \bigr)
\]
where, for $1\leqslant j\leqslant m$ and $w\in\C$, $M_j(w)$ is the $D\times t_j$ matrix
 \[
M_j(w):=\bigl(
r_t(a)w_j^a\bigr)_{\atop{0\leqslant a< D}{ 0\leqslant t<t_j}},
\]
namely
\[
M_j(w):=
\begin{pmatrix}
r_0(0)&r_1(0)&r_2(0)&\cdots&r_{t_j-1}(0)
\\
r_0(1)w&r_1(1)w&r_2(1) w&\cdots&r_{t_j-1}(1)w
\\ 
r_0(2)w^2&r_1(2)w^2&r_2(2)w^2&\cdots&r_{t_j-1}(2)w^2
\\
\vdots&\vdots&\vdots&\ddots&\vdots
\\
r_0(D-1)w^{D-1}&r_1(D-1)w^{D-1}&r_2(D-1)w^{D-1}&\cdots&r_{t_j-1}(D-1)w^{D-1}
\end{pmatrix}.
\] 

\begin{proof}
By means of linear combinations of the columns, we see that the determinant does not depend on the polynomials $r_t$. Without loss of generality we assume $r_t(z)=z^t$.

For $a\geqslant 0$ and $t\geqslant 0$ we have 
\[
\left(z \frac \rmd{\rmd z}\right)^t z^a=a^tz^a.
\]
For $a=0$ we have $z^a=1$ and $(z\rmd/\rmd z)^t 1=\delta_{t,0}$ (Kronecker symbol; in particular $z^a=1$ for $z=a=0$). 

Denote by $\C[z]_{<D}$ the $D$--dimensional vector space of polynomials of degree $<D$. 
For $w\in\C^\times$ and $1\leqslant j\leqslant m$, we have
\begin{equation}\label{Equation:derivees}
 \left( \left( z \frac \rmd{\rmd z}\right)^tP\right)(w)=0
 \quad (0\leqslant t<t_j)
 \;
\Longleftrightarrow \;
 \left( \frac \rmd{\rmd z}\right)^tP (w)=0
 \quad (0\leqslant t<t_j).
\end{equation}
Therefore, when $w_1,\dots,w_m$ are nonzero distinct complex numbers, the linear map
\[ 
\begin{matrix}
\C[z]_{<D} & \longrightarrow &\C^D
\\
P(z)&\longmapsto&\displaystyle \left( \left( \left( z \frac \rmd{\rmd z}\right)^tP\right)(w_j)\right)_{ 0\leqslant t<t_j,\; 1\leqslant j\leqslant m}
\end{matrix}
\] 
is injective, hence its determinant in the canonical bases of $\C[z]_{<D}$ and $\C^D$, which is $M$, is not zero.

We deduce that the determinant of $M$, which is a polynomial $\Delta$ in $w_1,\dots,w_m$, vanishes only when either some $w_j$ vanishes or two $w_j$ coincide. Therefore this determinant is a constant times a product of powers of $w_j$ and a product of powers of $w_i-w_j$.

We expand this determinant as a sum of products using Laplace rule. Each $w_j$ occurs $t_j$ times with distinct exponents $\geqslant 0$, hence $\Delta$ is divisible by $w_j^{t_i(t_i-1)/2}$. Further, each monomial in this expansion has degree $D(D-1)/2$, hence this polynomial $\Delta$ is homogeneous of degree $D(D-1)/2$. Furthermore this expansion shows that the degree of $\Delta$ in $w_j$ is 
\[
(D-1)+(D-2)+\cdots+(D-t_j)=
 t_j(D-(t_j+1)/2).
\]
We claim that this polynomial is divisible by $(w_i-w_j)^{t_it_j}$ for $1\leqslant i<j\leqslant m$. To prove this claim, we consider the values at $w_j$ of the derivatives of $\Delta$ with respect to $w_i$. Using \eqref{Equation:derivees}, we replace $\partial/\partial w_i$ with $w_i \partial/\partial w_i$. The column of index $(t,i)$ with $0\leqslant t<t_i$ is the transpose of 
\begin{equation}\label{Equation:colonne(t,i)}
\bigl(\delta_{t,0},w_i, 2^t w_i^2,3^tw_i^3,\dots, (D-1)^tw_i^{D-1}\bigr)
\end{equation}
For $0\leqslant t\leqslant t_i-1$, let $\tau_t\geqslant 0$. If we apply the operator $(w_i \partial/\partial w_i)^{\tau_t}$ to the column which is the transpose of \eqref{Equation:colonne(t,i)} and then we substitute $w_j$ to $w_i$, we obtain the transpose of 
\begin{equation}\label{Equation:colonne(t+tau)}
\bigl(\delta_{t+\tau_t,0},w_j, 2^{t+\tau_t} w_j^2,3^{t+\tau_t} w_j^3,\dots, (D-1)^{t+\tau_t} w_j^{D-1}\bigr).
\end{equation}
Let $\tau\geqslant 0$. If the value of $(w_i \partial/\partial w_i)^\tau \Delta$ when we substitute $w_j$ to $w_i$ is not $0$, then there exists non negative integers $\tau_0,\dots,\tau_{t_i-1}$ with sum $\tau$ such that the $t_i-1$ columns \eqref{Equation:colonne(t+tau)} are pairwise distinct and are not equal to one the columns of index $(t',j)$ as long as $0\leqslant t'< t_j$. Therefore we have $\tau_t+t\geqslant t_j$ for $0\leqslant t<t_i$ and $\tau_0, \tau_1+1,\dots,\tau_{t_i}-1+t_i-1$ are pairwise distinct. This implies $\tau_0+\tau_1+\cdots+\tau_{t_i-1}\geqslant t_it_j$, and our claim is proved. 

We recover the partial degree of $\Delta$ with respect to $w_i$:
\[
t_i\sum_{j\not=i} t_j +\frac {t_i(t_i-1)} 2=
t_iD-\frac {t_i(t_i+1)} 2\cdotp
\]
From
\[
D^2=\left(\sum_{j=1}^m t_j\right)^2=\sum_{j=1}^m t_j^2+2\sum_{1\leqslant i < j\leqslant m} t_it_j
\]
we deduce
\[
\frac{D(D-1)}2 =\sum_{j=1}^m \frac{ t_j(t_j-1)} 2+\sum_{1\leqslant i < j\leqslant m} t_it_j
\]
and \eqref{Equation:determinant} follows.

For $t\geqslant 0$, the Vandermonde determinant of $0,1,2,\dots,t-1$, namely
 \[
 k(t):=\det
 \begin{pmatrix}
1&0&0&\cdots&0
\\
1&1&1&\cdots&1
\\ 
 1&2&2^2&\cdots&2^{t-1}
\\
\vdots&\vdots&\vdots&\ddots&\vdots
\\
1&(t-1) &(t-1)^2 &\cdots&(t-1)^{t-1} 
\end{pmatrix}
\]
has the value \footnote{This sequence $ \bigl(k(t)\bigr)_{t\geqslant 0}$ is the sequence \url{https://oeis.org/A000178} of the on-line Encyclopedia of Integer Sequences.}
 \[
 k(t)=1^{t-1}2^{t-2}\cdots (t-2)^2(t-1)=\prod_{i=1}^{t-1}i^{t-i}
 =\prod_{j=1}^{t-1} j!.
 \]
By induction we deduce
 \[
 k(t_1,\cdots,t_m)=\prod_{j=1}^mk(t_j).
 \]
 \hfill $\square$
 \end{proof}

From Lemma \ref{Equation:determinant} we deduce:

\begin{lemma}\label{Lemma:vanderMonde1variable1}
Let $w_1,\dots,w_m$ be nonzero complex numbers, $T$ a positive integer and $q_{t,j}$ ($0\leqslant t\leqslant T$, $1\leqslant j\leqslant m$) complex numbers, not all of which are $0$. There exists a constant $\eta>0$ with the following property. Let $A$ be a real number. For $0\leqslant a\leqslant m(T+1)$, define 
\[
\xi_a=\sum_{t=0}^T \sum_{j=1}^m q_{t,j}(A+a)^tw_j^a.
\]
{\rm (i)} 
Assume
\[
\xi_a=0\quad \hbox{for}\quad 0\leqslant a\leqslant m(T+1).
\]
Then $w_1,\dots,w_m$ are not pairwise distinct.
\\
{\rm (ii)} 
Assume that the numbers $\xi_a$ ($0\leqslant a\leqslant m(T+1)$) are not all $0$. Then 
\[
\max_{0\leqslant a\leqslant m(T+1)}
|\xi_a |\geqslant \eta \max\{1,A\}^{-mT(T+1)}.
\]
\end{lemma}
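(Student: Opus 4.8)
The plan is to read the two assertions off the Vandermonde--type determinant of Lemma~\ref{Lemma:Vandermonde}. Put $D:=m(T+1)$ and, for $0\leqslant a<D$, view $\xi_a$ as the $a$-th coordinate of $M\mathbf q$, where $\mathbf q:=(q_{t,j})_{0\leqslant t\leqslant T,\,1\leqslant j\leqslant m}\neq 0$ and $M$ is the $D\times D$ matrix of Lemma~\ref{Lemma:Vandermonde} attached to the monic polynomials $r_t(z)=(A+z)^t$ (which has degree $t$) and to $t_1=\dots=t_m=T+1$. Since, by the first line of the proof of Lemma~\ref{Lemma:Vandermonde}, that determinant does not depend on the choice of the monic polynomials $r_t$, we have
\[
\det M=k(T+1,\dots,T+1)\prod_{j=1}^{m} w_j^{T(T+1)/2}\prod_{1\leqslant i<j\leqslant m}(w_j-w_i)^{(T+1)^2},
\]
a quantity that does not depend on $A$. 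Assertion (i) follows at once: if $\xi_a=0$ for $0\leqslant a\leqslant m(T+1)$, then in particular $M\mathbf q=0$ with $\mathbf q\neq 0$, so $\det M=0$; as the $w_j$ are nonzero the factor $\prod_j w_j^{T(T+1)/2}$ does not vanish, hence $w_j=w_i$ for some $i\neq j$, i.e.\ $w_1,\dots,w_m$ are not pairwise distinct.

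For (ii) I would first reduce to the case where the $w_j$ are pairwise distinct: grouping them by their distinct values $v_1,\dots,v_{m'}$ and setting $Q_{t,k}:=\sum_{w_j=v_k}q_{t,j}$, one gets $\xi_a=\sum_{t,k}Q_{t,k}(A+a)^tv_k^a$, and the hypothesis that the $\xi_a$ are not all $0$ forces $(Q_{t,k})\neq 0$; since $m'\leqslant m$ and $\max\{1,A\}\geqslant 1$, the estimate for $m',v_k,Q_{t,k}$ implies the one claimed for $m,w_j,q_{t,j}$. So assume the $w_j$ pairwise distinct. The only real difficulty is the uniformity in $A$: the entries $(A+a)^t$ of $M$, hence also the size of $M^{-1}$, grow with $|A|$, so Cramer's rule applied to $M$ would yield only a bound like $\max_a|\xi_a|\geqslant c\,\|\mathbf q\|\,\max\{1,|A|\}^{-T(D-1)}$, which is fine for $A\geqslant 1$ (note $T(D-1)=mT(T+1)-T\leqslant mT(T+1)$) but useless when $A$ is large and negative. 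To remove the $A$-dependence I would change basis in the family $(A+a)^t$: from $(A+a)^t=\sum_{s=0}^{t}\binom{t}{s}A^{t-s}a^s$ one obtains
\[
\xi_a=\sum_{s=0}^{T}\sum_{j=1}^{m}\widetilde q_{s,j}\,a^sw_j^a,\qquad \widetilde q_{s,j}:=\sum_{t=s}^{T}\binom{t}{s}A^{t-s}q_{t,j},
\]
i.e.\ $(\xi_a)_{0\leqslant a<D}=\widetilde M\,\widetilde{\mathbf q}$, where $\widetilde M$ is the matrix with rows indexed by $0\leqslant a<D$ and columns by $(s,j)$ with entry $a^sw_j^a$ --- again a matrix of Lemma~\ref{Lemma:Vandermonde}, but now with all entries bounded in absolute value by the constant $(D-1)^T\max_j\max\{1,|w_j|\}^{D-1}$ (independent of $A$), and with $\det\widetilde M=\det M\neq 0$.

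Two elementary observations then finish the proof. First, bounding the $(D-1)\times(D-1)$ minors of $\widetilde M$ by the trivial estimate gives $\|\widetilde M^{-1}\|\leqslant\kappa$ for some $\kappa$ independent of $A$, hence $\|\widetilde{\mathbf q}\|\leqslant\kappa\max_{0\leqslant a\leqslant m(T+1)}|\xi_a|$. Second, if $T'$ is the largest $t$ for which $(q_{t,1},\dots,q_{t,m})\neq 0$, then $\widetilde q_{T',j}=q_{T',j}$ for every $j$, because the only $A$-dependent terms in $\widetilde q_{T',j}$ involve the $q_{t,j}$ with $t>T'$, which all vanish; therefore $\|\widetilde{\mathbf q}\|\geqslant\max_j|q_{T',j}|=:\rho>0$. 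Combining, $\max_{0\leqslant a\leqslant m(T+1)}|\xi_a|\geqslant\rho/\kappa=:\eta>0$, and since $\max\{1,A\}^{-mT(T+1)}\leqslant 1$ this is actually slightly stronger than the stated inequality. The one point deserving care --- the main obstacle --- is precisely this uniformity in $A$; the change of basis above (equivalently, the factorisation $M=\widetilde M\,N$ with $N$ a unipotent upper-triangular matrix that is a change of coordinates in the exponent $t$, the same for each $j$) is exactly what moves all the $A$-dependence into the coefficient vector while leaving its top-degree part $(q_{T',j})_j\neq 0$ --- and hence a constant lower bound for $\|\widetilde{\mathbf q}\|$ --- intact. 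Everything else is a routine use of Lemma~\ref{Lemma:Vandermonde} and the trivial determinant bound.
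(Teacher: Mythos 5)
Your proof is correct, and part (i) together with the reduction to pairwise distinct $w_j$ in part (ii) coincides with the paper's argument. Where you genuinely diverge is in how the uniformity in $A$ is obtained in part (ii). The paper keeps the matrix $M=\bigl((A+a)^tw_j^a\bigr)$, notes that its determinant is a nonzero constant by Lemma \ref{Lemma:Vandermonde}, and concludes by ``inverting the matrix'' that $c\leqslant \max\{1,A\}^{mT(T+1)}\max_a|\xi_a|$; this works when $A$ is bounded below (in the only application, Lemma \ref{Lemma:vanderMonde1variable2}, $A$ is positive), but as you observe the entries of $M$, hence of $M^{-1}$, grow like $|A|^{T}$ rather than $\max\{1,A\}^{T}$, so the displayed inequality is not transparent for $A$ large and negative even though the lemma is stated for all real $A$. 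Your change of basis $(A+a)^t=\sum_s\binom{t}{s}A^{t-s}a^s$ moves all the $A$-dependence into the coefficient vector $\widetilde{\mathbf q}$ while leaving an $A$-independent invertible matrix $\widetilde M$, and the observation that the top-degree block $\widetilde q_{T',j}=q_{T',j}$ is untouched gives the $A$-free lower bound $\|\widetilde{\mathbf q}\|\geqslant\rho$. The payoff is twofold: you obtain the stronger, uniform estimate $\max_a|\xi_a|\geqslant\rho/\kappa$ (which trivially implies the stated one since $\max\{1,A\}^{-mT(T+1)}\leqslant 1$), and you cover the negative-$A$ regime cleanly. The only cosmetic remark is that the lemma allows $a$ to run over $m(T+1)+1$ values while the square matrix uses only $D=m(T+1)$ of them; restricting to $0\leqslant a<D$, as you implicitly do, is of course harmless for both assertions.
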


\begin{proof} 
We use Lemma \ref{Lemma:Vandermonde} with $t_1=\dots=t_m=L+1$ for the sequence of polynomials $r_t(T)=(A+T)^t$ $(t\geqslant 0)$. 
\\
{\rm (i)} 
If there is a nontrivial solution $q_{t,j}$ to the linear system of equations $\xi_a=0$, the determinant of this system is zero. Lemma \ref{Equation:determinant} implies that $w_1,\dots,w_m$ are not pairwise distinct.
\\
{\rm (ii)} 
Without loss of generality, reordering the sequence $w_1,\dots,w_m$ if necessary, we may assume 
 \[
 \{w_1,\dots,w_m\}=\{w_1,\dots,w_\mu\}
 \]
 with $1\leqslant \mu\leqslant m$. We have
 \[
\xi_a=\sum_{t=0}^T \sum_{j=1}^\mu \widetilde{q}_{t,j}(A+a)^tw_j^a
\]
with 
\[
\widetilde{q}_{t,j}=\sum_{\atop{1\leqslant i\leqslant m}{w_i=w_j}} q_{t,i}
\quad\text{for}\quad 0\leqslant t\leqslant T,\; 1\leqslant j\leqslant \mu.
\]
Since the numbers $\xi_a$ ($0\leqslant a\leqslant m(T+1)$) not all vanish, the coefficients $\widetilde{q}_{t,j}$ are not all zero. The nonzero values $|\widetilde{q}_{t,j}|$ are bounded from below by 
\[
\min_{I}\left|\sum_{i\in I}q_{t,i}\right|,
\]
where the minimum is over the subsets $I$ of $ \{1,\dots,m\}$ such that $\sum_{i\in I}q_{t,i}\not=0$; this minimum is a positive constant which does not depend on $A$. 
Since $w_1,\dots,w_\mu$ are pairwise distinct, the determinant of the system of linear forms $\xi_a$ is not zero, and has an absolute value bounded from below by a positive constant, as shown by Lemma \ref{Lemma:Vandermonde}. Inverting the matrix shows that there is a constant $c>0$ such that
\[
c\leqslant \max\{1,A\}^{mT(T+1)} \max_{0\leqslant a\leqslant m(T+1)}
|\xi_a |.
\] 
 \hfill $\square$
 \end{proof}

\begin{lemma}\label{Lemma:vanderMonde1variable2}
Let $v_1,\dots,v_n$ be nonzero complex numbers. Let $Q\in\C[X_0,X_1,\dots,X_n]$ be a nonzero polynomial with partial degrees $\leqslant L$. There exist two positive constants $c_1$ and $c_2$ depending on $v_1,\dots,v_n$ and $Q$ such that the following holds. Let $A$ be a positive real number. Assume
\[ 
|Q(A+a, v_1^{A+a},\dots,v_n^{A+a})|<c_1 c_2^{-A}
\quad\hbox{for any $a\in\Z$ satisfying $0\leqslant a<(L+1)^{n+1}$}.
\] 
Then 
\[
Q(A+a, v_1^{A+a},\dots,v_n^{A+a})=0 
\quad\hbox{for any $a\in\Z$ satisfying $0\leqslant a<(L+1)^{n+1}$}
\]
and there exists $(h_1,\dots,h_n)\in\Z^n$ satisfying
\[
0<\max_{1\leqslant i\leqslant n} |h_i|\leqslant L
\quad\hbox{
and
}\quad
v_1^{h_1}\cdots v_n^{h_n}=1.
\]
\end{lemma}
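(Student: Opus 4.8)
Suppose $Q(\uX)=\sum_{\um}q_{\um}\,X_0^{m_0}X_1^{m_1}\cdots X_n^{m_n}$ with $\um=(m_0,\dots,m_n)$ and $0\leqslant m_i\leqslant L$. The plan is to substitute $(X_0,X_1,\dots,X_n)\mapsto(A+a,v_1^{A+a},\dots,v_n^{A+a})$ and to view the numbers
\[
\xi_a:=Q(A+a,v_1^{A+a},\dots,v_n^{A+a})\qquad(a\in\Z)
\]
as the values of a single exponential polynomial in the variable $a$ whose frequencies are the products $v_1^{m_1}\cdots v_n^{m_n}$. Let $w_1,\dots,w_M$ be the pairwise distinct values of $v_1^{m_1}\cdots v_n^{m_n}$ as $(m_1,\dots,m_n)$ runs over $\{0,\dots,L\}^n$, so $M\leqslant(L+1)^n$, and group accordingly:
\[
\xi_a=\sum_{j=1}^M R_j(A+a)\,w_j^{A+a},\qquad
R_j(X)=\sum_{t=0}^L b_{t,j}X^t,\qquad
b_{t,j}=\sum_{v_1^{m_1}\cdots v_n^{m_n}=w_j} q_{(t,m_1,\dots,m_n)}.
\]
Since $A+a\in\Z$, the quantity $(v_1^{m_1}\cdots v_n^{m_n})^{A+a}$ is an honest power, equal to $w_j^{A+a}$ for every multi-index in the $j$-th group; hence the $b_{t,j}$ are fixed complex numbers depending only on $v_1,\dots,v_n$ and $Q$, not on $A$.

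I would first dispose of the degenerate alternative: if all the $b_{t,j}$ vanish, then $\xi_a=0$ for every $a$, which is the first asserted conclusion, and, since $Q\neq0$, not all $q_{\um}$ vanish, so some group must be non-trivial; that is, there are $(m_1,\dots,m_n)\neq(m_1',\dots,m_n')$ in $\{0,\dots,L\}^n$ with $v_1^{m_1}\cdots v_n^{m_n}=v_1^{m_1'}\cdots v_n^{m_n'}$, and then $h_i:=m_i-m_i'$ satisfies $0<\max_i|h_i|\leqslant L$ and $v_1^{h_1}\cdots v_n^{h_n}=1$, which is the second conclusion. So it suffices to choose $c_1,c_2$ so that the hypothesis $|\xi_a|<c_1c_2^{-A}$ for $0\leqslant a<(L+1)^{n+1}$ forces all the $b_{t,j}$ to vanish.

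Assume they do not all vanish. Then $(\xi_a)$ is a non-trivial linear combination, with coefficients $w_j^Ab_{t,j}$, of the $M(L+1)$ sequences $a\mapsto(A+a)^tw_j^a$ ($0\leqslant t\leqslant L$, $1\leqslant j\leqslant M$), and I would recover those coefficients by inverting the corresponding square system. Let $N$ be the $M(L+1)\times M(L+1)$ matrix with rows indexed by $0\leqslant a<M(L+1)$ and columns by the pairs $(t,j)$, with entry $(A+a)^tw_j^a$. By Lemma~\ref{Lemma:Vandermonde}, applied with the monic polynomials $r_t(X)=(A+X)^t$ and $t_1=\cdots=t_M=L+1$, the determinant of $N$ equals $k(L+1,\dots,L+1)\prod_j w_j^{L(L+1)/2}\prod_{i<j}(w_j-w_i)^{(L+1)^2}$: a non-zero constant, independent of $A$, because the $w_j$ are non-zero and pairwise distinct and because this determinant does not depend on the chosen monic polynomials; moreover the entries of $N$ are bounded by a fixed power of $A+(L+1)^{n+1}$. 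Inverting $N$ thus expresses each $w_j^Ab_{t,j}$ as a linear combination of the $\xi_a$ ($0\leqslant a<M(L+1)\leqslant(L+1)^{n+1}$) with coefficients of at most polynomial size in $A$; since the non-zero numbers $|w_j^Ab_{t,j}|=|w_j|^A\,|b_{t,j}|$ are bounded below by $c\,\rho^{-A}$ for suitable $c>0$ and $\rho\geqslant1$ depending only on $v_1,\dots,v_n$ and $Q$, this yields $\max_{0\leqslant a<(L+1)^{n+1}}|\xi_a|\geqslant c_1c_2^{-A}$ for an appropriate choice of $c_1,c_2>0$. With that choice the hypothesis is contradicted, so the $b_{t,j}$ all vanish, completing the proof.

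The main obstacle is precisely the uniformity in $A$ of this last lower bound. A more naive grouping --- say by the values of $m_1\log v_1+\cdots+m_n\log v_n$ --- would produce coefficients depending on $A$ through genuine exponential polynomials, which can be arbitrarily small, so that no admissible $c_1,c_2$ could exist; this is why one takes the products $v_1^{m_1}\cdots v_n^{m_n}$ themselves as frequencies, so that the regrouping yields the $A$-free coefficients $b_{t,j}$, and why Lemma~\ref{Lemma:Vandermonde}, which pins down the relevant generalised Vandermonde determinant as an explicit non-zero constant, is the right tool. Once those are in hand, what remains is the routine bookkeeping of carrying the harmless scalars $w_j^A$ and the polynomial-in-$A$ size of $N^{-1}$ through the estimate.
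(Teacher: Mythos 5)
Your argument is in substance the paper's own: the paper routes the statement through Lemma~\ref{Lemma:vanderMonde1variable1} (regroup the monomials $v_1^{\ell_1}\cdots v_n^{\ell_n}$ by their distinct values, invert the generalised Vandermonde system of Lemma~\ref{Lemma:Vandermonde}, and bound the surviving nonzero coefficients from below by $c\,\rho^{-A}$), and you re-derive that intermediate lemma inline; the treatment of the degenerate case, the extraction of the relation $v_1^{h_1}\cdots v_n^{h_n}=1$ from a non-singleton group, and the inversion estimate are all correct. The one step that does not hold as written is the regrouping identity $\prod_i\bigl(v_i^{A+a}\bigr)^{m_i}=w_j^{A+a}$, which you justify by ``since $A+a\in\Z$'': the hypothesis only says that $A$ is a positive \emph{real} number, so $A+a$ need not be an integer, and for a non-integer exponent two multi-indices with $v_1^{m_1}\cdots v_n^{m_n}=v_1^{m_1'}\cdots v_n^{m_n'}$ may satisfy $\sum_i m_i\log v_i-\sum_i m_i'\log v_i\in 2\pi\rmi\Z\smallsetminus\{0\}$, in which case $\exp\bigl((A+a)\sum_i m_i\log v_i\bigr)\neq\exp\bigl((A+a)\sum_i m_i'\log v_i\bigr)$ and your coefficients $b_{t,j}$ no longer represent $\xi_a$. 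The paper avoids committing to this by writing $\bigl(v_1^{\ell_1}\cdots v_n^{\ell_n}\bigr)^{A+a}=\bigl(v_1^{\ell_1}\cdots v_n^{\ell_n}\bigr)^{A}\cdot\bigl(v_1^{\ell_1}\cdots v_n^{\ell_n}\bigr)^{a}$, so that the grouping is performed only over the integer exponent $a$; you should adopt that splitting (the grouped coefficients then carry factors of modulus $|w_j|^A$, which your $c\,\rho^{-A}$ lower bound already accommodates). Since every application of the lemma in the paper takes $A\in\Z$, this defect is harmless in practice, but it should be repaired if the statement is to be read with $A$ an arbitrary positive real.
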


\begin{proof} 
Let us write
\[
Q(X_0,X_1,\dots,X_n)=\sum_{\uell}q_{\uell}
X_0^{\ell_0}X_1^{\ell_1}\cdots X_n^{\ell_n},
\]
where $\uell$ stands for $(\ell_0,\ell_1,\dots,\ell_n)$, $0\leqslant \ell_i\leqslant L$ ($0\leqslant i\leqslant n$). 
Then
\[
Q(A+a, v_1^{A+a},\dots,v_n^{A+a})=
\sum_{\uell}\widetilde{q}_{\uell}
(A+a)^{\ell_0}(v_1^{\ell_1}\cdots v_n^{\ell_n})^a,
\]
where 
\[
\widetilde{q}_{\uell}=q_{\uell}(v_1^{\ell_1}\cdots v_n^{\ell_n})^A. 
\]
We use Lemma \ref{Lemma:vanderMonde1variable1} with 
 $m=(L+1)^n$ and the following numbers $w_1,\dots,w_m$: 
\[
v_1^{\ell_1}\cdots v_n^{\ell_n}, \qquad 0\leqslant \ell_i\leqslant L, \quad 1\leqslant i\leqslant n.
\] 
 \hfill $\square$
 \end{proof}

Notice that conversely, if there exists $(h_1,\dots,h_n)\in\Z^n\smallsetminus\{0\}$ satisfying
 \[
v_1^{h_1}\cdots v_n^{h_n}=1,
\]
then 
\[
\prod_{h_i>0} X_i^{h_i}-\prod_{h_i<0} X_i^{|h_i|}
\]
is a nonzero polynomial $Q$ of partial degree $|h_i|$ in $X_i$ and $0$ in $X_0$ which satisfies, for any $a\in\Z$, 
\[
Q(a,v_1^a,\dots,v_n^a)=0.
\]

We need a version of Lemma \ref{Lemma:vanderMonde1variable2} for two variables. Notice that elements $(v_1,w_1), \dots, (v_n,w_n)$ in $(\C^\times)^2$ are multiplicatively dependent if and only if there exists $(h_1,\dots,h_n)\in\Z^n\smallsetminus\{0\}$ such that 
\[
v_1^{h_1}\cdots v_n^{h_n}= w_1^{h_1}\cdots w_n^{h_n}=1,
\]
which is equivalent to 
\[
(v_1^aw_1^b)^{h_1}\cdots (v_n^aw_n^b)^{h_n}=1
\]
for all $(a,b)\in\Z^2$.

\begin{lemma}\label{Lemma:vanderMonde2variables}
	Let $(v_1,w_1),\dots,(v_n,w_n)$ be $n$ elements in $(\C^\times)^2$. The following assertions are equivalent.
	\\
	{\rm (i)} There exists a nonzero polynomial $P\in\C[X_1,\dots,X_n]$ such that, for any $(a,b)\in\Z^2$, 
\[ 
	P(v_1^aw_1^b,\dots,v_n^aw_n^b)=0.
\] 	 
	{\rm (ii)} There exists a nonzero polynomial $Q\in\C[T_1,T_2,X_1,\dots,X_n]$ such that, for any $(a,b)\in\Z^2$, 
\[ 
	Q(a,b, v_1^aw_1^b,\dots,v_n^aw_n^b)=0.
\] 
	{\rm (iii)}
	The elements $(v_1,w_1), \dots, (v_n,w_n)$ in $(\C^\times)^2$ are multiplicatively dependent.
\end{lemma}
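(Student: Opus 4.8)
The plan is to prove the three implications $(iii)\Rightarrow(i)\Rightarrow(ii)$ and $(ii)\Rightarrow(iii)$, the last one being where the work of the previous lemmas is brought to bear. The implication $(iii)\Rightarrow(i)$ is a direct analogue of the remark following Lemma \ref{Lemma:vanderMonde1variable2}: if $(h_1,\dots,h_n)\in\Z^n\smallsetminus\{0\}$ satisfies $v_1^{h_1}\cdots v_n^{h_n}=w_1^{h_1}\cdots w_n^{h_n}=1$, then the nonzero binomial $P=\prod_{h_i>0}X_i^{h_i}-\prod_{h_i<0}X_i^{|h_i|}$ satisfies $P(v_1^aw_1^b,\dots,v_n^aw_n^b)=\prod(v_i^aw_i^b)^{h_i}\bigl(\text{numerator}\bigr)-\cdots$, which vanishes for every $(a,b)$ because $\prod_i(v_i^aw_i^b)^{h_i}=(\prod v_i^{h_i})^a(\prod w_i^{h_i})^b=1$. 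The implication $(i)\Rightarrow(ii)$ is trivial: a polynomial $P$ as in (i) is already a polynomial $Q$ as in (ii) that happens not to involve $T_1,T_2$.

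The substance is $(ii)\Rightarrow(iii)$, and here the idea is to reduce from two integer parameters $(a,b)$ to one by freezing $b$ and applying the one-variable Lemma \ref{Lemma:vanderMonde1variable2}. First I would argue by contraposition: assume $(v_1,w_1),\dots,(v_n,w_n)$ are multiplicatively independent and show that no nonzero $Q$ can vanish on all of $\Z^2$. For a fixed integer $b$, set $V_i=V_i(b):=v_iw_i^b\in\C^\times$ and consider $Q_b(X_0,X_1,\dots,X_n):=Q(X_0,b,X_1,\dots,X_n)$; if $Q_b$ is not the zero polynomial then, since $Q_b(a,V_1^a,\dots,V_n^a)=0$ for all $a\in\Z$, the converse remark to Lemma \ref{Lemma:vanderMonde1variable2} forces a relation $V_1^{h_1}\cdots V_n^{h_n}=1$ with $0<\max|h_i|\leqslant L$, i.e. $\prod v_i^{h_i}\cdot(\prod w_i^{h_i})^b=1$. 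The key observation is that there are only finitely many exponent vectors $(h_1,\dots,h_n)$ with $\max|h_i|\leqslant L$, while $b$ ranges over an infinite set; so some single vector $\bfh=(h_1,\dots,h_n)\neq 0$ works for infinitely many values of $b$. Comparing the relation for two such values $b\neq b'$ gives $(\prod w_i^{h_i})^{b-b'}=1$, hence $\prod w_i^{h_i}$ is a root of unity; replacing $\bfh$ by a suitable integer multiple $N\bfh$ we get $\prod w_i^{Nh_i}=1$ and then, from the relation at one admissible $b$, also $\prod v_i^{Nh_i}=1$ — contradicting multiplicative independence. It therefore remains to rule out the case where $Q_b\equiv 0$ for every integer $b$; but the coefficients of $Q_b$ (in the variables $X_0,X_1,\dots,X_n$) are polynomials in $b$, and a polynomial in $b$ vanishing at all $b\in\Z$ is identically zero, so $Q_b\equiv 0$ for all $b$ would force $Q=0$, contrary to hypothesis. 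This completes $(ii)\Rightarrow(iii)$ and the cycle.

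The main obstacle I anticipate is the bookkeeping in the pigeonhole step: one must be careful that the exponent bound $\max|h_i|\leqslant L$ coming out of Lemma \ref{Lemma:vanderMonde1variable2} is genuinely uniform in $b$ (it is, since it depends only on the partial degrees of $Q$, which do not change when we substitute a value for $T_2$), and one must handle the passage from "root of unity" to "$=1$" cleanly by clearing the order of the root of unity into the exponent vector. Everything else is a routine application of the already-established one-variable machinery.
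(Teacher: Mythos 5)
Your cycle of implications, the (iii)$\Rightarrow$(i) construction, and the closing pigeonhole/root-of-unity argument are all sound, but the pivotal reduction step in (ii)$\Rightarrow$(iii) fails as written. With $Q_b(X_0,X_1,\dots,X_n):=Q(X_0,b,X_1,\dots,X_n)$ and $V_i=v_iw_i^b$ one has
\[
Q_b(a,V_1^a,\dots,V_n^a)=Q\bigl(a,\,b,\,v_1^aw_1^{ab},\dots,v_n^aw_n^{ab}\bigr),
\]
whereas hypothesis (ii) only gives the vanishing of $Q\bigl(a,ab,v_1^aw_1^{ab},\dots\bigr)$ (second slot $ab$, not $b$) or of $Q\bigl(a,b,v_1^aw_1^{b},\dots\bigr)$ (exponent of $w_i$ equal to $b$, not $ab$); neither is the quantity you claim vanishes. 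The source of the trouble is that ``freezing $b$'' restricts to the horizontal line $\{(a,b):a\in\Z\}$, along which the evaluation points are $w_i^b\cdot v_i^a$ --- a constant times $v_i^a$, not a power $V_i^a$ of a single number. If you absorb those constants into the coefficients so as to apply Lemma \ref{Lemma:vanderMonde1variable2} legitimately, the relation you extract is $v_1^{h_1}\cdots v_n^{h_n}=1$: it involves only the $v_i$ and says nothing about the $w_i$, so it cannot produce the common exponent vector that multiplicative dependence of the pairs $(v_i,w_i)$ requires.

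The repair is to restrict to lines through the origin instead: set $Q_b(X_0,\uX):=Q(X_0,bX_0,\uX)$, so that hypothesis (ii) applied at the points $(a,ab)$ gives exactly $Q_b(a,V_1^a,\dots,V_n^a)=0$ for all $a\in\Z$, with $V_i=v_iw_i^b$. The partial degree of $Q_b$ in $X_0$ is at most $2L$, still uniform in $b$, and $Q_b\neq0$ for all but finitely many $b$ since its coefficients are polynomials in $b$; your pigeonhole over infinitely many admissible $b$, the deduction that $\prod_iw_i^{h_i}$ is a root of unity, and the passage to the multiple $N\mathbf{h}$ then all go through. This corrected argument is the paper's proof with the roles of $a$ and $b$ exchanged: the paper works along the lines $\{(ab,b):b\in\Z\}$ for $a$ in the finite range $\{0,\dots,(L+1)^n\}$ and finishes by computing in the index-$(a_2-a_1)$ subgroup $\Z(a_1,1)+\Z(a_2,1)$ of $\Z^2$, whereas your root-of-unity ending is a clean alternative. (Also, the citation should be to Lemma \ref{Lemma:vanderMonde1variable2} itself, not to the ``converse'' remark following it, which goes in the opposite direction.)
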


\begin{proof}
 {\rm (i)} $\Rightarrow $ {\rm (ii)} is plain. 
 
 \noindent 
	 {\rm (iii)} $\Rightarrow $ {\rm (i)}: if there exists $(h_1,\dots,h_n)\in\Z^n\smallsetminus\{0\}$ such that 
\[ 
	(v_1^aw_1^b)^{h_1}\cdots (v_n^aw_n^b)^{h_n}=1
\] 
	for all $(a,b)$ in $\Z^2$, then the polynomial 
\[ 
	P(X_1,\dots,X_n)=
	\prod_{h_i>0} 
	X_i^{h_i}-
	\prod_{h_i<0} 
	X_i^{-h_i}
\] 
	satisfies property {\rm (i)}. 
	
	\noindent {\rm (ii)} $\Rightarrow $ {\rm (iii)}: assume $Q$ satisfies property {\rm (ii)}. Let $L$ be an upper bound for the partial degrees of $Q$.
	For each $ a \in\Z$, we have, for all $b\in\Z$, 
\[ 
	Q(ab, b, (v_1^aw_1)^b,\dots,(v_n^aw_n)^b)=0.
\] 
	From Lemma \ref{Lemma:vanderMonde1variable2} with $v_i$ replaced with $v_i^aw_i$, it follows that 
 there exists $(\ell_1^{(a)},\dots,\ell_n^{(a)})\in\Z^n\smallsetminus\{0\}$ such that 
\[ 
	(v_1^aw_1)^{\ell_1^{(a)}}\cdots (v_n^aw_n)^{\ell_n^{(a)}}=1
\quad\hbox{
	and 
}\quad
	\max_{1\leqslant i\leqslant n} |\ell_i^{(a)}|\leqslant L.
\] 
	There are at most $(L+1)^n$ tuples $(\ell_1^{(a)},\dots,\ell_n^{(a)})$. 
From Dirichlet's box principle we deduce that there exist two distinct integers $a_1$ and $a_2$ in $\Z$ with $0\leqslant a_1<a_2\leqslant (L+1)^n$ for which $\ell_i^{(a_1)}=\ell_i^{(a_2)}$ for $1\leqslant i\leqslant n$:
\[ 
	(v_1^{a_1}w_1 )^{ \ell_1^{(a_1)}} \cdots (v_n^{a_1}w_n )^{ \ell_n^{(a_1)}}=
	(v_1^{a_2}w_1 )^{ \ell_1^{(a_1)}} \cdots (v_n^{a_2}w_n )^{ \ell_n^{(a_1)}}
	=1.
\] 
The subgroup $\Z(a_1,1)+\Z(a_2,1)$ has index $a_2-a_1$ in $\Z^2$.
For $i=1,\dots,n$ set $h_i=(a_2-a_1)\ell_i^{(a_1)}$. For $(a,b)\in\Z^2$ we have 
\[
(a_2-a_1)(a,b)=(-a+ba_2)(a_1,1)+(a-ba_1)(a_2,1),
\]
hence
\[
\prod_{i=1}^n (v_i^aw_i^b )^{h_i}= \prod_{i=1}^n (v_i^aw_i^b )^{(a_2-a_1)\ell_i^{(a_1)}}
=
\prod_{i=1}^n (v_i^{a_1}w_i )^{ \ell_i^{(a_1)}(-a+ba_2)}
\prod_{i=1}^n (v_i^{a_2}w_i )^{ \ell_i^{(a_1)}(a-ba_1)}=1.
\]
 \hfill $\square$
 \end{proof}

%%%%%%%%%%%%%%%%%%%%%%%%%
\subsection{Without the sigma function}%\label{SS:WithoutSigma}

We give a necessary and sufficient condition for functions $z,	\wp(z), \zeta(z),
	\rme^{t_1z},\dots,\rme^{t_rz},f_{u_1}(z),\dots,f_{u_s}(z)$ to be algebraically independent.

\begin{proposition}\label{Proposition:independance_f_u}
	Let $t_1,\dots,t_r$, $u_1,\dots,u_s$ be complex numbers with $u_i\not\in\Omega$ for $1\leqslant i\leqslant s$. 
	The following conditions are equivalent.\\
	{\rm (i)} The $3+r+s $ functions 
\[ 
z,	\wp(z), \zeta(z),
	\rme^{t_1z},\dots,\rme^{t_rz},f_{u_1}(z),\dots,f_{u_s}(z)
\] 
	are algebraically dependent.
	\\
	{\rm (ii)} 
	There exists $(h_1,\dots,h_{r+s})\in\Z^{r+s}\smallsetminus\{0\}$ such that the meromorphic function
\[ 
	\rme^{(h_1t_1+\cdots+h_rt_r)z}
	f_{u_1}(z)^{h_{r+1}}\cdots f_{u_s}(z)^{h_{r+s}}
\] 
	belongs to $\C\bigl(\wp(z),\wp'(z)\bigr)$.
	\\
	{\rm (iii)} The $r+s$ elements 
\[ 
	\exp(t_1\bfomega), \dots, \exp(t_r\bfomega), \; 
	\exp(\bflambda(u_1,\bfomega) ),\ldots, \exp(\bflambda(u_s,\bfomega) )
\] 
	of $(\C^\times)^\Omega$ are multiplicatively dependent. 
	\\
	{\rm (iv)} The $2+r+s$ elements 
\[
\begin{pmatrix} 2\pi\rmi \\ 0 \end{pmatrix}, 
\begin{pmatrix} 0\\ 2\pi\rmi \end{pmatrix}, 
\begin{pmatrix} t_1\omega_1\\ t_1\omega_2\end{pmatrix}, \dots, 
\begin{pmatrix} t_r\omega_1\\ t_r\omega_2 \end{pmatrix}, \; 
\begin{pmatrix} \lambda(u_1,\omega_1)\\ \lambda(u_1,\omega_2)\end{pmatrix} ,\ldots, 
\begin{pmatrix} \lambda(u_s,\omega_1)\\ \lambda(u_s,\omega_2) \end{pmatrix} 
\]
of $\C^2$ are linearly dependent over $\Q$. 
\end{proposition}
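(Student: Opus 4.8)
The plan is to prove the cycle $(\mathrm{i})\Leftrightarrow(\mathrm{ii})\Leftrightarrow(\mathrm{iii})\Leftrightarrow(\mathrm{iv})$, the only substantial step being $(\mathrm{i})\Rightarrow(\mathrm{ii})$. For $\underline h=(h_1,\dots,h_{r+s})\in\Z^{r+s}$ write
\[
G_{\underline h}(z)=\rme^{(h_1t_1+\dots+h_rt_r)z}\,f_{u_1}(z)^{h_{r+1}}\cdots f_{u_s}(z)^{h_{r+s}}.
\]
By \eqref{Equation:lambda}, $G_{\underline h}(z+\omega)=G_{\underline h}(z)\exp\!\bigl((h_1t_1+\dots+h_rt_r)\omega+\sum_{j=1}^s h_{r+j}\lambda(u_j,\omega)\bigr)$ for all $\omega\in\Omega$, so $G_{\underline h}$ is $\Omega$--periodic if and only if $(h_1t_1+\dots+h_rt_r)\omega+\sum_j h_{r+j}\lambda(u_j,\omega)\in 2\pi\rmi\Z$ for every $\omega\in\Omega$. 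As this is $\Z$--linear in $\omega\in\Z\omega_1+\Z\omega_2$, that condition is exactly the multiplicative triviality, for the exponents $h_i$, of $\exp(t_1\bfomega),\dots,\exp(t_r\bfomega),\exp(\bflambda(u_1,\bfomega)),\dots,\exp(\bflambda(u_s,\bfomega))$ in $(\C^\times)^\Omega$; testing it on $\omega=\omega_1,\omega_2$ and clearing denominators (the vectors $(2\pi\rmi,0)$ and $(0,2\pi\rmi)$ accounting for the indeterminacy modulo $2\pi\rmi\Z$) it is also the $\Q$--linear dependence of the $2+r+s$ vectors in $(\mathrm{iv})$, with $h_i$ as the coefficients of the last $r+s$ of them. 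Since a meromorphic $\Omega$--periodic function on $\C$ is a rational function of $\wp(z)$ and $\wp'(z)$, the condition ``$G_{\underline h}\in\C(\wp(z),\wp'(z))$'' is equivalent to ``$G_{\underline h}$ is $\Omega$--periodic'', and $(\mathrm{ii})\Leftrightarrow(\mathrm{iii})\Leftrightarrow(\mathrm{iv})$ follows.

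For $(\mathrm{ii})\Rightarrow(\mathrm{i})$: writing the elliptic function $G_{\underline h}$ as $A(\wp)+\wp'B(\wp)$ with $A,B$ rational and using $(\wp')^2=4\wp^3-g_2\wp-g_3$, one gets $\bigl(G_{\underline h}-A(\wp)\bigr)^2=(4\wp^3-g_2\wp-g_3)B(\wp)^2$; clearing denominators yields a nonzero polynomial relation between $\wp(z)$ and the functions $\rme^{t_iz}$, $f_{u_j}(z)$ entering the monomial $G_{\underline h}$, a fortiori between all $3+r+s$ functions of $(\mathrm{i})$.

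The heart is $(\mathrm{i})\Rightarrow(\mathrm{ii})$, which I would handle by differential algebra, the field of constants being $\C$ throughout (the constant field of the differential field of functions meromorphic on $\C$). Put $F=\C\bigl(\wp(z),\wp'(z)\bigr)$ and $N=F\bigl(z,\zeta(z)\bigr)$. By the Ostrowski--Kolchin theorem for primitives together with \eqref{Equation:Legendre}, $z$ and $\zeta(z)$ are algebraically independent over $F$ — no nonzero $a\cdot1-b\,\wp$ (with $a,b\in\C$) is of the form $g'$ with $g\in F$, since its primitives $az+b\,\zeta(z)+c$ ($c\in\C$) fail to be $\Omega$--periodic unless $a=b=0$ — so $N=\C\bigl(z,\wp(z),\wp'(z),\zeta(z)\bigr)$ has transcendence degree $3$ over $\C$. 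Assumption $(\mathrm{i})$ then forces the $r+s$ functions $\rme^{t_1z},\dots,\rme^{t_rz},f_{u_1}(z),\dots,f_{u_s}(z)$ to be algebraically dependent over $N$. Each is an exponential of a primitive of an element of $N$: $(\rme^{t_iz})'/\rme^{t_iz}=t_i\in\C\subset N$, and $f_u'/f_u=\zeta(z+u)-\zeta(z)-\zeta(u)$ is $\Omega$--periodic and meromorphic, hence lies in $F\subset N$. By the Ostrowski--Kolchin theorem on exponential elements over a differential field with constant field $\C$, algebraic dependence over $N$ produces a nontrivial multiplicative relation with a factor in $N$: there is $\underline h\in\Z^{r+s}\smallsetminus\{0\}$ with $G_{\underline h}\in N$.

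It remains to descend from $N$ to $F$. For $\omega\in\Omega$ the substitution $z\mapsto z+\omega$ induces an $F$--automorphism $\phi_\omega$ of $N=F(z,\zeta)$, sending $z\mapsto z+\omega$ and $\zeta(z)\mapsto\zeta(z)+\eta(\omega)$; as $\omega$ ranges over $\Z\omega_1+\Z\omega_2$, these $\phi_\omega$ are the translations of the affine plane with coordinates $(z,\zeta)$ by the two vectors $(\omega_1,\eta_1)$ and $(\omega_2,\eta_2)$, which span $\C^2$ by \eqref{Equation:Legendre}. Quasi--periodicity gives $\phi_\omega(G_{\underline h})=\mu(\omega)\,G_{\underline h}$ with $\mu(\omega)\in\C^\times$, so writing $G_{\underline h}=P_1/P_2$ in lowest terms with $P_1,P_2\in F[z,\zeta]$, the polynomials $\phi_\omega(P_1)$ and $P_1$ (and likewise $\phi_\omega(P_2)$ and $P_2$) are associates in the factorial ring $F[z,\zeta]$. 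Comparing top--degree homogeneous parts forces the unit factor to be $1$, so $P_1$ and $P_2$ are genuinely invariant under translation by $(\omega_1,\eta_1)$ and by $(\omega_2,\eta_2)$; but a polynomial in two variables over a field of characteristic $0$ invariant under two translations whose vectors span the plane must lie in the base field. Hence $P_1,P_2\in F$ and $G_{\underline h}\in F=\C\bigl(\wp(z),\wp'(z)\bigr)$, which is $(\mathrm{ii})$. I expect the main obstacle to be exactly this last descent, together with pinning down the precise form of the Ostrowski--Kolchin input: the monomial furnished by differential algebra lies a priori only in $N=\C(z,\wp,\wp',\zeta)$, and it is the rigidity of the rank--two group $\{\phi_\omega\}$ of translation automorphisms that rules out any genuine occurrence of $z$ or $\zeta(z)$ in it; the three remaining equivalences are bookkeeping.
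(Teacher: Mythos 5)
Your proof is correct, but you handle the one substantial implication by a genuinely different method from the paper's. For (i) $\Rightarrow$ (ii)/(iii) the paper stays elementary and explicit: it evaluates the vanishing polynomial relation at the lattice translates $z_0+a\omega_1+b\omega_2$ of a well-chosen point, uses the quasi-periodicity laws to rewrite these values as $Q_{z_0}\bigl(a,b,\rme^{t_i\omega},\rme^{\lambda(u_j,\omega)}\bigr)=0$, and extracts the multiplicative relation (iii) from the generalized Vandermonde machinery of Lemma \ref{Lemma:vanderMonde2variables} (built on Lemmas \ref{Lemma:Vandermonde}--\ref{Lemma:vanderMonde1variable2}). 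You instead invoke the Kolchin--Ostrowski theorem over $N=\C\bigl(z,\wp(z),\wp'(z),\zeta(z)\bigr)$ --- whose transcendence degree $3$ you correctly establish from the primitive case of that theorem together with Legendre's relation --- to produce a monomial $G_{\underline h}\in N$, and then descend from $N$ to $\C\bigl(\wp(z),\wp'(z)\bigr)$ via the rank-two group of translation automorphisms of $F[z,\zeta]$; the reduced-fraction and leading-homogeneous-form argument there is sound, since $(\omega_1,\eta_1)$ and $(\omega_2,\eta_2)$ span $\C^2$ and a bivariate polynomial over a field of characteristic $0$ invariant under two independent translations is constant. Your route is shorter and more conceptual, at the cost of the Kolchin--Ostrowski black box; the paper's route is self-contained and, importantly, effective: it bounds $\max_i|h_i|$ by the partial degrees of the dependence relation (Remark \ref{Remark:majorationcoefficients}), a quantitative refinement exploited in the later corollaries which your argument does not deliver. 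The remaining equivalences (ii) $\Leftrightarrow$ (iii) $\Leftrightarrow$ (iv) and (ii) $\Rightarrow$ (i) are the same bookkeeping in both treatments.
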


\begin{proof}
	The equivalence {\rm (iii)} $\Leftrightarrow$ {\rm (iv)} and the implication {\rm (ii)} $\Rightarrow $ {\rm (i)} are plain.
	
	\noindent
	 {\rm (iii)} $\Rightarrow $ {\rm (ii)}:
	Assume that the $r+s$ elements 
\[ 
	\bigl(\exp(t_1\omega)\bigr)_{\omega\in \Omega}, \dots, \exp\bigl((t_r\omega)\bigr)_{\omega\in\Omega}, \; 
	\exp\bigl(\lambda(u_1,\omega)\bigr)_{\omega\in\Omega},\ldots, \exp\bigl(\lambda(u_s,\omega)\bigr)_{\omega\in\Omega}
\] 
	of $(\C^\times)^\Omega$ are multiplicatively dependent: there exists $(h_1,\dots,h_{r+s})\in\Z^{r+s}\smallsetminus\{0\}$ such that 
\[ 
	\rme^{(h_1t_1+\cdots+h_rt_r)\omega+
		h_{r+1}\lambda(u_1,\omega)+\cdots+h_{r+s} \lambda(u_s,\omega)}=1
\] 
	for all $\omega\in\Omega$. From \eqref{Equation:lambda} it follows that the function 
\[ 
	F(z)=\rme^{(h_1t_1+\cdots+h_rt_r)z}
	f_{u_1}(z)^{h_{r+1}}\cdots f_{u_s}(z)^{h_{r+s}}
\] 
	satisfies $F(z+\omega)=F(z)$ for all $\omega\in \Omega$, and therefore belongs to $\C\bigl(\wp(z),\wp'(z)\bigr)$.

	\noindent 	
	\noindent {\rm (i)} $\Rightarrow $ {\rm (iii)}:
	 
	Assume that 
\[ 
	P(Y_1,Y_2,Y_3,X_1,\dots,X_{r+s})=\sum_{\um}\sum_{\uell} p_{\um,\uell} Y_1^{m_1} Y_2^{m_2} Y_3^{m_3}X_1^{\ell_1}\cdots X_{r+s}^{\ell_{r+s}}
\] 
	is a nonzero polynomial in $\C[Y_1,Y_2,Y_3,,X_1,\dots,X_{r+s}]$ such that the meromorphic function 
\begin{equation}\label{Equation:F(z)}
	F(z)= P\bigl(z,\wp(z),\zeta(z), \rme^{t_1z},\dots, \rme^{t_rz}, f_{u_1}(z),\dots,f_{u_s}(z)\bigr)
\end{equation}
	is $0$. For $z_0\in\C\smallsetminus\Omega$ let $Q_{z_0}(T_1,T_2,X_1,\dots,X_{r+s}) \in\C[T_1,T_2,X_1,\dots,X_{r+s}]$
	denote the polynomial 
\[ 
	\begin{aligned}
		Q_{z_0}(T_1,T_2,X_1,\dots,&X_{r+s}) = 
		\\
		&
		\sum_{\um} \sum_{\uell} p_{\um,\uell} (z_0+T_1\omega_1+T_2\omega_2)^{m_1}
		\wp(z_0)^{m_2}
		(\zeta(z_0)+T_1\eta_1+T_2\eta_2)^{m_3}
		\\
		&
		\rme^{(\ell_1t_1+\cdots+\ell_rt_r)z_0}
		\bigl(f_{u_1}(z_0)\bigr)^{\ell_{r+1}}\cdots \big(f_{u_s}(z_0)\bigr)^{\ell_{r+s}}
		X_1^{\ell_1}\cdots X_{r+s}^{\ell_{r+s}}.
	\end{aligned}
\] 
Recall Legendre relation \eqref{Equation:Legendre}: $\omega_2\eta_1-\omega_1\eta_2=2\rmi \pi$. Since the coefficients $p_{\um,\uell} $ are not all zero, there exists $z_0\in\C\smallsetminus\Omega$ such that $Q_{z_0}$ is not the zero polynomial.
For $\omega=a\omega_1+b\omega_2\in\Omega$ we deduce from $F(z_0+\omega)=0$:
\[ 
	\begin{aligned}
	\sum_{\um} \sum_{\uell} p_{\um,\uell} 
	&(z_0+a\omega_1+b\omega_2)^{m_1}
		\wp(z_0)^{m_2}
		(\zeta(z_0)+a\eta_1+b\eta_2)^{m_3}
		\\
		&
\rme^{(\ell_1t_1+\cdots+\ell_rt_r)(z_0+\omega)}
	\bigl(f_{u_1}(z_0+\omega)\bigr)^{\ell_{r+1}}\cdots \bigl(f_{u_s}(z_0+\omega)\bigr)^{\ell_{r+s}}=0
\end{aligned}
\] 
 which, by \eqref{Equation:lambda}, is
\[ 
	Q_{z_0}(a,b,\rme^{t_1\omega},\dots,\rme^{t_r\omega}, 
	\rme^{\lambda(u_1,\omega)},\dots,\rme^{\lambda(u_s,\omega)})=0.
\] 
The conclusion follows from Lemma \ref{Lemma:vanderMonde2variables}.
 \hfill $\square$
 \end{proof}

\begin{remark}\label{Remark:majorationcoefficients}
The proof of {\rm (i)} $\Rightarrow $ {\rm (iii)} together with Lemma \ref{Lemma:vanderMonde1variable2} shows that it the equation \eqref{Equation:F(z)} is satisfied, then there exists $(h_1,\dots,h_{r+s})\in\Z^{r+s}\smallsetminus\{0\}$ such that 
\[
(h_1t_1+\cdots+h_rt_r)\omega+h_{r+1}\lambda(u_1,\omega)+\cdots+h_{r+s}\lambda(u_s,\omega)\in 2\pi\rmi\Z
\]
for all $\omega\in\Omega$ and 
\[
\max\{|h_1|,\dots,|h_{r+s}|\}\leqslant L
\]
where $L$ is an upper bound for the partial degrees of $P$. 
\end{remark}

\begin{corollary} 
Let $t_1,\dots,t_r$, $u_1,\dots,u_s$ be complex numbers. Assume $u_j\not\in\Omega$ for $1\leqslant j\leqslant s$. 
Let $i_1,\dots,i_\varrho$, $j_1,\dots,j_\sigma$ be indices such that 
\[
\begin{pmatrix} 2\pi\rmi \\ 0 \end{pmatrix}, 
\begin{pmatrix} 0\\ 2\pi\rmi \end{pmatrix}, 
\begin{pmatrix} t_{i_1}\omega_1\\ t_{i_1}\omega_2\end{pmatrix}, \dots, 
\begin{pmatrix} t_{i_\varrho}\omega_1\\ t_{i_\varrho}\omega_2 \end{pmatrix}, \; 
\begin{pmatrix} \lambda(u_{j_1},\omega_1)\\ \lambda(u_{j_1},\omega_2)\end{pmatrix} ,\ldots, 
\begin{pmatrix} \lambda(u_{j_\sigma},\omega_1)\\ \lambda(u_{j_\sigma},\omega_2) \end{pmatrix} 
\]
is a basis of the $\Q$--vector space spanned by
\[
\begin{pmatrix} 2\pi\rmi \\ 0 \end{pmatrix}, 
\begin{pmatrix} 0\\ 2\pi\rmi \end{pmatrix}, 
\begin{pmatrix} t_1\omega_1\\ t_1\omega_2\end{pmatrix}, \dots, 
\begin{pmatrix} t_r\omega_1\\ t_r\omega_2 \end{pmatrix}, \; 
\begin{pmatrix} \lambda(u_1,\omega_1)\\ \lambda(u_1,\omega_2)\end{pmatrix} ,\ldots, 
\begin{pmatrix} \lambda(u_s,\omega_1)\\ \lambda(u_s,\omega_2) \end{pmatrix}. 
\]
Then 
\[
\bigl\{z, \wp(z), \zeta(z), \rme^{t_{i_1}z},\dots, \rme^{t_{i_\varrho}z}, f_{u_{j_1}}(z),\dots,f_{u_{j_\sigma}}(z)\bigr\}
\]
is a transcendence basis over $\C$ of the field 
\[
\C\bigl(z, \wp(z), \zeta(z), \rme^{t_1z},\dots, \rme^{t_rz}, f_{u_1}(z),\dots,f_{u_s}(z)\bigr).
\]
\end{corollary}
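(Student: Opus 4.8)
The plan is to deduce the Corollary directly from Proposition~\ref{Proposition:independance_f_u} by an elementary argument: one shows that, among the functions in the big list, the sublist indexed by $i_1,\dots,i_\varrho$ and $j_1,\dots,j_\sigma$ (together with $z$, $\wp(z)$, $\zeta(z)$, which are always there) is algebraically independent, and that each of the remaining functions $\rme^{t_iz}$ and $f_{u_j}(z)$ is algebraic over the field generated by the sublist. Both halves will come from the equivalence (i)$\Leftrightarrow$(iv) in the Proposition, applied to suitably chosen subfamilies.

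First I would prove algebraic independence of the proposed transcendence basis. Suppose $z,\wp(z),\zeta(z),\rme^{t_{i_1}z},\dots,\rme^{t_{i_\varrho}z},f_{u_{j_1}}(z),\dots,f_{u_{j_\sigma}}(z)$ were algebraically dependent. By Proposition~\ref{Proposition:independance_f_u} applied with the reduced parameter sets $\{t_{i_1},\dots,t_{i_\varrho}\}$ and $\{u_{j_1},\dots,u_{j_\sigma}\}$, condition~(iv) would give a nontrivial $\Q$-linear relation among the vectors $\binom{2\pi\rmi}{0},\binom{0}{2\pi\rmi},\binom{t_{i_k}\omega_1}{t_{i_k}\omega_2},\binom{\lambda(u_{j_\ell},\omega_1)}{\lambda(u_{j_\ell},\omega_2)}$; but these were assumed to form a \emph{basis} of the $\Q$-vector space they span, hence are $\Q$-linearly independent, a contradiction. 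Here I must check that Proposition~\ref{Proposition:independance_f_u} does apply to a subfamily --- but that is immediate, since its hypotheses only require $u_{j_\ell}\notin\Omega$, which is inherited.

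Next I would show that each function not in the sublist is algebraic over $\C$ applied to the sublist. Fix an index $i\notin\{i_1,\dots,i_\varrho\}$ (the case of an omitted $u_j$ is identical). By the basis hypothesis, the vector $\binom{t_i\omega_1}{t_i\omega_2}$ lies in the $\Q$-span of the basis vectors, so there is a nontrivial $\Q$-linear relation involving $\binom{t_i\omega_1}{t_i\omega_2}$ and vectors from the basis; clearing denominators, the enlarged family $\{z,\wp(z),\zeta(z),\rme^{t_{i_1}z},\dots,\rme^{t_{i_\varrho}z},f_{u_{j_1}}(z),\dots,f_{u_{j_\sigma}}(z),\rme^{t_iz}\}$ satisfies condition~(iv), hence by Proposition~\ref{Proposition:independance_f_u} is algebraically dependent over $\C$. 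Combined with the algebraic independence of the sublist just established, this forces $\rme^{t_iz}$ to be algebraic over the field generated by the sublist. Iterating over all omitted indices, the full field $\C\bigl(z,\wp(z),\zeta(z),\rme^{t_1z},\dots,\rme^{t_rz},f_{u_1}(z),\dots,f_{u_s}(z)\bigr)$ is algebraic over $\C$ applied to the sublist, which together with algebraic independence of the sublist yields that the sublist is a transcendence basis.

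The one genuinely delicate point is the bookkeeping in the algebraicity step: passing from ``the enlarged family is algebraically dependent'' to ``the new function is algebraic over the field generated by the old ones'' uses the standard exchange principle for transcendence bases, but one must be sure that the algebraic relation produced does actually involve $\rme^{t_iz}$ nontrivially --- equivalently, that $\rme^{t_iz}$ does not already lie in the algebraic closure for a spurious reason. This is guaranteed because the $\Q$-relation placing $\binom{t_i\omega_1}{t_i\omega_2}$ in the span must have nonzero coefficient on that vector (otherwise it would be a relation among the basis vectors alone), and unwinding Proposition~\ref{Proposition:independance_f_u} via Remark~\ref{Remark:majorationcoefficients} shows the resulting multiplicative relation, hence the algebraic relation, genuinely involves the variable attached to $\rme^{t_iz}$. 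I expect this to be the main obstacle; the rest is routine linear algebra over $\Q$ plus repeated invocation of the Proposition.
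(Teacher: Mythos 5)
Your proof is correct and follows essentially the same route as the paper: apply the equivalence (i)$\Leftrightarrow$(iv) of Proposition~\ref{Proposition:independance_f_u} to the subfamily to get algebraic independence of the proposed basis, then to each enlarged family to get algebraic dependence and hence algebraicity of each omitted function over the field generated by the sublist. The ``delicate point'' you flag is in fact handled automatically by the standard field-theoretic fact that if $S$ is algebraically independent and $S\cup\{f\}$ is algebraically dependent then $f$ is algebraic over $\C(S)$, which is all the paper invokes.
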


\begin{proof}
By renumbering the sequences $t_1,\dots,t_r$ and $u_1,\dots,u_s$ if necessary, there is no loss of generality to assume that 
$(i_1,\dots,i_\varrho)=(1,\dots,\varrho)$ and $(j_1,\dots,j_\sigma)=(1,\dots,\sigma)$. By the implication (iv)$\Rightarrow$ (i) of Proposition \ref{Proposition:independance_f_u}, the $3+\varrho+\sigma$ functions 
\begin{equation}\label{Equation:basetranscendance}
z, \wp(z), \zeta(z), \rme^{t_1z},\dots, \rme^{t_\varrho z}, f_{u_1}(z),\dots,f_{u_\sigma}(z)
\end{equation}
are algebraically independent. It only remains to prove that for $\varrho<i\leqslant r$ in case $r>\varrho$ (resp. $\sigma<j\leqslant s$ in case $s>\sigma$) the function $\rme^{t_iz}$ (resp. $f_{u_j}(z)$) is algebraic over the field generated over $\C$ by the functions \eqref{Equation:basetranscendance}. 

Assume $r>\varrho$ and let $i$ satisfy $\varrho<i\leqslant r$. By assumption the $3+\varrho+\sigma$ elements
\[
\begin{pmatrix} 2\pi\rmi \\ 0 \end{pmatrix}, 
\begin{pmatrix} 0\\ 2\pi\rmi \end{pmatrix}, 
\begin{pmatrix} t_1\omega_1\\ t_1\omega_2\end{pmatrix}, \dots, 
\begin{pmatrix} t_\varrho\omega_1\\ t_\varrho\omega_2 \end{pmatrix}, \; 
\begin{pmatrix} t_i\omega_1\\ t_i\omega_2 \end{pmatrix}, \; 
\begin{pmatrix} \lambda(u_1,\omega_1)\\ \lambda(u_1,\omega_2)\end{pmatrix} ,\ldots, 
\begin{pmatrix} \lambda(u_\sigma,\omega_1)\\ \lambda(u_\sigma,\omega_2) \end{pmatrix}
\]
are linearly dependent over $\Q$. From (v)$\Rightarrow$(1) in
Proposition \ref{Proposition:independance_f_u}, it follows that the $4+\varrho+\sigma$ functions 
\[
z, \wp(z), \zeta(z), \rme^{t_1z},\dots, \rme^{t_\varrho z}, \rme^{t_i z}, f_{u_1}(z),\dots,f_{u_\sigma}(z)
\]
are algebraically dependent. Since the functions \eqref{Equation:basetranscendance} are algebraically independent, it follows that the function $\rme^{t_i z}$ is algebraic over the field generated by the functions \eqref{Equation:basetranscendance}. 

In the same way, if $s>\sigma$, for $\sigma<j\leqslant s$, the $3+\varrho+\sigma$ elements
\[
\begin{pmatrix} 2\pi\rmi \\ 0 \end{pmatrix}, 
\begin{pmatrix} 0\\ 2\pi\rmi \end{pmatrix}, 
\begin{pmatrix} t_1\omega_1\\ t_1\omega_2\end{pmatrix}, \dots, 
\begin{pmatrix} t_\varrho\omega_1\\ t_\varrho\omega_2 \end{pmatrix}, \; 
\begin{pmatrix} \lambda(u_1,\omega_1)\\ \lambda(u_1,\omega_2)\end{pmatrix} ,\ldots, 
\begin{pmatrix} \lambda(u_\sigma,\omega_1)\\ \lambda(u_\sigma,\omega_2) \end{pmatrix}, \;
\begin{pmatrix} \lambda(u_j,\omega_1)\\ \lambda(u_j,\omega_2) \end{pmatrix}
\]
are linearly dependent over $\Q$. 
Proposition \ref{Proposition:independance_f_u} implies that the $4+\varrho+\sigma$ functions 
\[
z, \wp(z), \zeta(z), \rme^{t_1z},\dots, \rme^{t_\varrho z}, f_{u_1}(z),\dots,f_{u_\sigma}(z), f_{u_j}(z),
\]
are algebraically dependent. Since the functions \eqref{Equation:basetranscendance} are algebraically independent, it follows that the function $f_{u_j}(z)$ is algebraic over the field generated by the functions \eqref{Equation:basetranscendance}.

 \hfill $\square$
 \end{proof}

We show that the assumption in Theorem \ref{Theorem:AlgebraicIndependenceFunctions} that $\omega_1,\omega_2,u_1,\dots,u_s$ are linearly independent over $\Q$ cannot be omitted (see the discussion in \cite{M} after Lemma 20.10 p.~257).

 \begin{proposition}\label{Proposition:AlgebraicIndependenceSerreFunctions}
	Let $u_1,\dots,u_s$ be complex numbers which
do not belong to $ \Omega$ but such that $\omega_1,\omega_2$, $u_1,\dots,u_s$ are linearly dependent over $\Q$. Then there exist $(k_1,\dots,k_r)\in\Z^r\smallsetminus\{0\}$ and $t_0\in\C$ such that
	the function
\[ 
	\rme^{t_0z} f_{u_1}^{k_1} (z)\dots f_{u_s}^{k_s}(z)
\] 
	belongs to $\C\bigl(\wp(z),\wp'(z)\bigr)$.
	
	\end{proposition}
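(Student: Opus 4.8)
The plan is to realize the asserted function as an elliptic function. Recall that a meromorphic function on $\C$ lies in $\C\bigl(\wp(z),\wp'(z)\bigr)$ precisely when it is periodic with respect to the full lattice $\Omega$, and that this periodicity need only be verified on the basis $\omega_1,\omega_2$. Since each $u_i\notin\Omega$, every $f_{u_i}$ is a nonzero meromorphic function, so for any $(k_1,\dots,k_s)\in\Z^s$ and any $t_0\in\C$ the function
\[
F(z)=\rme^{t_0z}\,f_{u_1}(z)^{k_1}\cdots f_{u_s}(z)^{k_s}
\]
is a nonzero meromorphic function, and from the quasi--periodicity relation \eqref{Equation:lambda} it satisfies, for $\omega\in\Omega$,
\[
F(z+\omega)=F(z)\exp\Bigl(t_0\omega+{\textstyle\sum_{i=1}^s} k_i\lambda(u_i,\omega)\Bigr).
\]
Hence it suffices to produce $(k_1,\dots,k_s)\in\Z^s\smallsetminus\{0\}$ and $t_0\in\C$ such that $t_0\omega_j+\sum_{i=1}^s k_i\lambda(u_i,\omega_j)\in 2\pi\rmi\Z$ for $j=1,2$; then $F(z+\omega_1)=F(z+\omega_2)=F(z)$, so $F$ is $\Omega$--periodic and thus lies in $\C\bigl(\wp(z),\wp'(z)\bigr)$.

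To produce these data I use the hypothesis. Clearing denominators in a nontrivial $\Q$--linear relation among $\omega_1,\omega_2,u_1,\dots,u_s$ yields integers $a,b,k_1,\dots,k_s$, not all zero, with $a\omega_1+b\omega_2+\sum_{i=1}^s k_iu_i=0$; since $\omega_1,\omega_2$ are linearly independent over $\R$, the $k_i$ are not all zero, which is the sought integer vector. Applying the consequence \eqref{Equation:ConsequenceLegendre} of Legendre's relation to each $u_i$, namely $\lambda(u_i,\omega_1)\omega_2-\lambda(u_i,\omega_2)\omega_1=2\pi\rmi u_i$, multiplying by $k_i$, summing over $i$, and substituting $\sum_i k_iu_i=-(a\omega_1+b\omega_2)$, I obtain
\[
\mu_1\omega_2-\mu_2\omega_1=-2\pi\rmi\,(a\omega_1+b\omega_2),\qquad\text{where}\quad \mu_j:=\sum_{i=1}^s k_i\lambda(u_i,\omega_j),
\]
equivalently $(\mu_1+2\pi\rmi b)\,\omega_2=(\mu_2-2\pi\rmi a)\,\omega_1$. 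Setting $t_0:=-(\mu_1+2\pi\rmi b)/\omega_1$ gives $t_0\omega_1+\mu_1=-2\pi\rmi b\in2\pi\rmi\Z$, and dividing the last identity by $\omega_1$ gives $-t_0\omega_2=\mu_2-2\pi\rmi a$, i.e. $t_0\omega_2+\mu_2=2\pi\rmi a\in2\pi\rmi\Z$. This is exactly the condition required above, which completes the argument.

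I do not expect a genuine obstacle here: the proof is a short computation driven entirely by the generalized Legendre relation \eqref{Equation:ConsequenceLegendre}. The two points worth care are that the coefficients $k_i$ extracted from the given $\Q$--linear dependence are really not all zero — this is where one uses that $(\omega_1,\omega_2)$ is a lattice basis, so that $\omega_1/\omega_2\notin\R$ — and the passage from periodicity verified on $\{\omega_1,\omega_2\}$ to periodicity on all of $\Omega$, combined with the classical identification of the field of $\Omega$--elliptic functions with $\C\bigl(\wp(z),\wp'(z)\bigr)$.
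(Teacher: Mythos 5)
Your proof is correct and follows essentially the same route as the paper: extract integers $k_1,\dots,k_s$ (not all zero) from the $\Q$--linear dependence and choose $t_0$ so that the quasi--period homomorphism $\omega\mapsto t_0\omega+\sum_i k_i\lambda(u_i,\omega)$ lands in $2\pi\rmi\Z$, whence the function is $\Omega$--periodic and therefore lies in $\C\bigl(\wp(z),\wp'(z)\bigr)$. The only cosmetic difference is that the paper writes $t_0$ in closed form as $\sum_i k_i\zeta(u_i)-\eta(\omega_0)$ with $\omega_0=\sum_i k_iu_i\in\Omega$ and checks the condition for all $\omega$ at once via the generalized Legendre relation \eqref{Equation:LegendreGeneralise}, whereas you solve for $t_0$ from the two basis relations; the resulting $t_0$ is the same.
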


\begin{proof}
	Assume that $\omega_1,\omega_2,u_1,\dots,u_s$ are linearly dependent over $\Q$:
\[ 
	k_1 u_1+\cdots+k_su_s=\omega_0\in\Omega
\] 
with $(k_1,\dots,k_r)\in\Z^r\smallsetminus\{0\}$. 
	Let 
\[ 
	t_0=k_1 \zeta(u_1)+\cdots+k_s\zeta(u_s)- \eta(\omega_0).
\] 
	Then for $\omega\in\Omega$, 
\[ 
	t_0\omega+k_1\lambda(u_1,\omega)+\cdots+k_s\lambda(u_s,\omega)=\eta(\omega)\omega_0-\eta(\omega_0)\omega\in 2\pi \rmi\Z.
\] 
We deduce that the function 
\[ 
	\rme^{t_0z} f_{u_1}^{k_1} (z)\dots f_{u_s}^{k_s}(z)
\] 
is periodic with group of periods containing $\Omega$, hence belongs to $\C\bigl(\wp(z),\wp'(z)\bigr)$. 
 \hfill $\square$
 \end{proof}

\begin{example}
Algebraic dependence of the three function $\rme^{tz}$, $\wp(z)$ and $f_u(z)$ when $u$ is a torsion point.
\\ 
\rm
Let $u\in(\Omega \otimes_\Z\Q)\smallsetminus\Omega$. There exists an integer $h\geqslant 2$, such that $\omega_0:=hu$ belongs to $\Omega$. Set
$t_0=h\zeta(\omega_0/h)-\eta(\omega_0) $. Proposition \ref{Proposition:AlgebraicIndependenceSerreFunctions} shows that the function 
$f_u(z)^h\rme^{t_0 z}$ belongs to the field $\C\bigl(\wp(z),\wp'(z)\bigr)$. 

Using \cite{Wald79b} (see \cite{BW2} for more details) one checks that $t_0$ belongs to $\Q\bigl(g_2,g_3,\wp(u),\wp'(u)\bigr)$ and that the function 
$f_u^{h^2}(z)\rme^{ht_0 z}$ belongs to the field $\Q\bigl(g_2,g_3,\wp(u),\wp'(u), \wp(z),\wp'(z)\bigr)$. 

From the algebraic independence of the two functions $\rme^{t_1z}$ and $\rme^{t_2z}$, when $t_1$ and $t_2$ are linearly independent over $\Q$, it follows that the set of $t\in\C^\times$ for which the three functions $\rme^{tz}, \wp(z), f_u(z)$ are algebraically dependent is the set $\Q^\times t_0$ of nonzero rational multiples of $t_0$. 
\end{example}

We now prove part 1 of Theorem \ref{Theorem:AlgebraicIndependenceFunctions} without the sigma function.

\begin{proposition}\label{Proposition:sansCMnisigma}
Let $t_1,\dots,t_r$ be complex numbers linearly independent over $\Q$. Let $u_1,\dots,u_s$ be complex numbers such that $\omega_1,\omega_2,u_1,\dots,u_s$ are linearly independent over $\Q$.
	Then the $3+r+s$ functions 
\[ 
	z, \wp(z), \zeta(z), \rme^{t_1z},\dots, \rme^{t_rz}, f_{u_1}(z),\dots,f_{u_s}(z)
\]
	are algebraically independent. 
\end{proposition}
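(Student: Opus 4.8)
The plan is to reduce Proposition \ref{Proposition:sansCMnisigma} to the already-established equivalences of Proposition \ref{Proposition:independance_f_u}, so that essentially all the work is in verifying a linear-algebra condition. By Proposition \ref{Proposition:independance_f_u}, the $3+r+s$ functions $z,\wp(z),\zeta(z),\rme^{t_1z},\dots,\rme^{t_rz},f_{u_1}(z),\dots,f_{u_s}(z)$ are algebraically dependent if and only if condition {\rm (iv)} holds, i.e. the $2+r+s$ vectors
\[
\begin{pmatrix} 2\pi\rmi \\ 0 \end{pmatrix},
\begin{pmatrix} 0\\ 2\pi\rmi \end{pmatrix},
\begin{pmatrix} t_1\omega_1\\ t_1\omega_2\end{pmatrix}, \dots,
\begin{pmatrix} t_r\omega_1\\ t_r\omega_2 \end{pmatrix}, \;
\begin{pmatrix} \lambda(u_1,\omega_1)\\ \lambda(u_1,\omega_2)\end{pmatrix} ,\ldots,
\begin{pmatrix} \lambda(u_s,\omega_1)\\ \lambda(u_s,\omega_2) \end{pmatrix}
\]
are linearly dependent over $\Q$. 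So I would argue by contradiction: suppose such a nontrivial $\Q$-linear relation exists, say with integer coefficients $c_0,c_0',h_1,\dots,h_r,k_1,\dots,k_s$ (clearing denominators), and derive a $\Q$-linear relation among $\omega_1,\omega_2,u_1,\dots,u_s$, contradicting the hypothesis.

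To carry this out, I would look at the two coordinate equations. Write $\lambda(u_j,\omega_i)=\eta_i u_j-\omega_i\zeta(u_j)$ for $i=1,2$. The first coordinate gives
\[
2\pi\rmi\, c_0 + \Bigl(\sum_{\ell=1}^r h_\ell t_\ell\Bigr)\omega_1 + \sum_{j=1}^s k_j\bigl(\eta_1 u_j-\omega_1\zeta(u_j)\bigr)=0,
\]
and the second coordinate gives the analogous equation with $\omega_2,\eta_2$ in place of $\omega_1,\eta_1$. The key manoeuvre is to eliminate the $\eta_i$ and the $\zeta(u_j)$ terms: multiply the first equation by $\omega_2$, the second by $\omega_1$, subtract, and invoke the Legendre relation $\omega_2\eta_1-\omega_1\eta_2=2\pi\rmi$ \eqref{Equation:Legendre}, together with $\lambda(u_j,\omega_1)\omega_2-\lambda(u_j,\omega_2)\omega_1=2\pi\rmi u_j$ \eqref{Equation:ConsequenceLegendre}. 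The $\omega_i\zeta(u_j)$ cross-terms cancel in the combination, and what survives is
\[
2\pi\rmi\bigl(c_0\omega_2-c_0'\omega_1\bigr)+\Bigl(\sum_{\ell=1}^r h_\ell t_\ell\Bigr)\bigl(\omega_1\omega_2-\omega_1\omega_2\bigr)+2\pi\rmi\sum_{j=1}^s k_j u_j=0,
\]
that is, $c_0\omega_2-c_0'\omega_1+\sum_{j=1}^s k_j u_j=0$ after dividing by $2\pi\rmi$. This is a $\Q$-linear relation among $\omega_1,\omega_2,u_1,\dots,u_s$; by hypothesis it must be trivial, so $c_0=c_0'=0$ and $k_1=\cdots=k_s=0$.

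It remains to handle the surviving terms. Once $c_0=c_0'=k_j=0$, the original relation reduces to $\bigl(\sum_\ell h_\ell t_\ell\bigr)\omega_1=0$ and $\bigl(\sum_\ell h_\ell t_\ell\bigr)\omega_2=0$; since $\omega_1\neq0$, this forces $\sum_\ell h_\ell t_\ell=0$, and since $t_1,\dots,t_r$ are linearly independent over $\Q$, all $h_\ell=0$. Thus the relation was trivial, contradiction, and condition {\rm (iv)} fails; hence the functions are algebraically independent. The main obstacle here is purely bookkeeping: getting the elimination via \eqref{Equation:Legendre} and \eqref{Equation:ConsequenceLegendre} to cancel exactly the right terms, and making sure the hypothesis that the $u_j\notin\Omega$ (needed for $f_{u_j}$ and $\lambda(u_j,\cdot)$ to be defined) is in place — which follows from $\omega_1,\omega_2,u_1,\dots,u_s$ being $\Q$-linearly independent, since any $u_j\in\Omega$ would be a $\Z$-combination of $\omega_1,\omega_2$.
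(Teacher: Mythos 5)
Your proposal is correct and follows essentially the same route as the paper: invoke the equivalence (i)$\Leftrightarrow$(iv) of Proposition \ref{Proposition:independance_f_u}, then eliminate the $\eta_i$ and $\zeta(u_j)$ terms by taking $\omega_2\cdot(\text{first equation})-\omega_1\cdot(\text{second equation})$ and applying \eqref{Equation:ConsequenceLegendre}, producing a $\Q$-linear relation among $\omega_1,\omega_2,u_1,\dots,u_s$. The only cosmetic difference is the order of deductions (the paper first checks that the coefficients of the $\lambda$-terms are not all zero using $\omega_2/\omega_1\notin\Q$, whereas you conclude all coefficients vanish directly from the hypothesis and then dispose of the $t_\ell$ terms); both are valid.
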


\begin{proof}
 Assume that 
 $t_1,\dots,t_r$ are linearly independent over $\Q$ and that the $3+r+s$ functions 
\[ 	z, \wp(z), \zeta(z), \rme^{t_1z},\dots, \rme^{t_rz}, f_{u_1}(z),\dots,f_{u_s}(z)
\] 
	are algebraically dependent. 
From Proposition \ref{Proposition:independance_f_u} we deduce that the $2+r+s$ elements 
\[ 
	\begin{pmatrix} 2\pi\rmi \\ 0 \end{pmatrix}, 
	\begin{pmatrix} 0\\ 2\pi\rmi \end{pmatrix}, 
	\begin{pmatrix} t_1\omega_1\\ t_1\omega_2\end{pmatrix}, \dots, 
	\begin{pmatrix} t_r\omega_1\\ t_r\omega_2 \end{pmatrix}, \; 
	\begin{pmatrix} \lambda(u_1,\omega_1)\\ \lambda(u_1,\omega_2)\end{pmatrix} ,\ldots, 
	\begin{pmatrix} \lambda(u_s,\omega_1)\\ \lambda(u_s,\omega_2) \end{pmatrix} 
\] 
	of $\C^2$ are linearly dependent over $\Q$. Hence there exist rational integers $h_0,h'_0,h_1,\dots,h_{r+s}$, not all zero, such that
\[
	\left\{
	\begin{aligned}
		2\pi \rmi h_0+(h_1t_1+\cdots+h_rt_r)\omega_1+h_{r+1}\lambda(u_1,\omega_1)+\cdots+h_{r+s}\lambda(u_s,\omega_1)&=0,
		\\
		2\pi \rmi h'_0+(h_1t_1+\cdots+h_rt_r)\omega_2+h_{r+1}\lambda(u_1,\omega_2)+\cdots+h_{r+s}\lambda(u_s,\omega_2)&=0.
	\end{aligned}
	\right.
\] 
Since $t_1,\dots,t_r$ are linearly independent over $\Q$ and since $\omega_2/\omega_1\not\in\Q$, the numbers $h_{r+1},\dots,h_{r+s}$ are not all zero. 
	We eliminate $h_1t_1+\cdots+h_rt_r$ by multiplying the first equation by $\omega_2$ and subtracting the second equation multiplied by $\omega_1$. From \eqref{Equation:ConsequenceLegendre} we deduce
\[ 
	h_{r+1} u_1+\cdots+h_{r+s} u_s = h_0\omega_2-h'_0\omega_1,
\] 
which proves that $\omega_1,\omega_2,u_1,\dots,u_s$ are linearly dependent over $\Q$.	
\hfill$\square$
\end{proof}
 
 \bigskip
 
The following result, which is a consequence of Proposition \ref{Proposition:independance_f_u} and Remark \ref{Remark:majorationcoefficients}, shows that there is no formula expressing $f_{u_1+u_2}(z)$ in terms of $f_{u_1}(z)$ and $f_{u_2}(z)$. As a function of the variable $u$ in $\C$, $f_u(z)$ is not meromorphic in $\C$: there is an essential singularity at the points $u$ in $\Omega$. However $f_u(z)$ is an analytic function of $(u,z)$ in $(\C\smallsetminus\Omega)^2$.

\begin{corollary} Let $\wp$ be an elliptic function. 
\\
(a)
The six functions 
\[
z,\wp(z),\zeta(z), f_{u_1}(z), f_{u_2}(z), f_{u_1+u_2}(z)
\]
of three variables $(z,u_1,u_2)$ in the domain 
\[
\{(z,u_1,u_2)\in\C^3\; \mid \; z\not\in\Omega, \; u_1\not\in\Omega, \; u_2\not\in\Omega, \; u_1+u_2\not\in\Omega\}
\]
are algebraically independent over $\C$.
\\
(b)
Let $\lambda\in\C\smallsetminus\{0,1,-1\}$. Then the five functions 
\[
z,\wp(z),\zeta(z), f_{u}(z), f_{\lambda u}(z)
\]
of two variables $(z,u)$ in the domain 
\[
\{(z,u)\in\C^2\; \mid \; z\not\in\Omega, \; u\not\in\Omega, \; \lambda u\not\in\Omega\}
\]
are algebraically independent over $\C$.
\\
(c)
Let $s\geqslant 1$ and let $u_1,\dots,u_s$ be $s$ elements in $\C\smallsetminus\Omega$. The two following assertions are equivalent.
\\
{\rm (i)} The meromorphic functions 
\[
z,\wp(z),\zeta(z), f_{u_1}(z),\dots, f_{u_s}(z)
\]
of a single variable $z$ in $\C$ are algebraically dependent over $\C$.
\\
{\rm (ii)} There exists $(a_1,\dots,a_s)\in \Z^s\smallsetminus\{0\}$ such that 
\begin{equation}\label{Equation:a1u1}
a_1u_1+\cdots+a_su_s\in\Omega\quad\text{and}\quad a_1\zeta(u_1)+\cdots+a_s\zeta(u_s)=\eta(a_1u_1+\cdots+a_su_s).
\end{equation}
\\
If property {\rm (ii)} is satisfied, then the function 
\[
f_{u_1}(z)^{a_1}\cdots f_{u_s}(z)^{a_s}
 \]
belongs to $\C\bigl(\wp(z),\wp'(z)\bigr)$.
\end{corollary}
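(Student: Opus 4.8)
The plan is to establish part (c) first, directly from Proposition~\ref{Proposition:independance_f_u} and Legendre's relation~\eqref{Equation:Legendre}, and then to derive (a) and (b) from (c) by specializing the extra variables to a sufficiently generic value.

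\textbf{Part (c).} I would apply Proposition~\ref{Proposition:independance_f_u} with $r=0$ (no exponential factors). Its condition (iv) then says that $z,\wp(z),\zeta(z),f_{u_1}(z),\dots,f_{u_s}(z)$ are algebraically dependent if and only if the $s+2$ vectors $(2\pi\rmi,0)$, $(0,2\pi\rmi)$, $\bigl(\lambda(u_j,\omega_1),\lambda(u_j,\omega_2)\bigr)$ for $1\leqslant j\leqslant s$ of $\C^2$ are linearly dependent over $\Q$. After clearing denominators, such a dependence is a relation $\mu_0(2\pi\rmi,0)+\mu_0'(0,2\pi\rmi)+\sum_j a_j\bigl(\lambda(u_j,\omega_1),\lambda(u_j,\omega_2)\bigr)=(0,0)$ with integers $\mu_0,\mu_0',a_1,\dots,a_s$ not all zero; since $(2\pi\rmi,0)$ and $(0,2\pi\rmi)$ are already $\Q$-independent, the $a_j$ cannot all vanish, so $(a_1,\dots,a_s)\in\Z^s\smallsetminus\{0\}$. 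Inserting $\lambda(u,\omega)=\eta(\omega)u-\omega\zeta(u)$ and eliminating in the resulting $2\times 2$ system by means of Legendre's relation, exactly as in the proof of Proposition~\ref{Proposition:sansCMnisigma}, I obtain $\sum_j a_j u_j=\mu_0'\omega_1-\mu_0\omega_2\in\Omega$ and $\sum_j a_j\zeta(u_j)=\mu_0'\eta_1-\mu_0\eta_2=\eta\bigl(\sum_j a_j u_j\bigr)$, which is precisely~\eqref{Equation:a1u1}. Conversely, assuming~\eqref{Equation:a1u1} with $\omega_0:=\sum_j a_j u_j\in\Omega$, one computes $\sum_j a_j\lambda(u_j,\omega)=\eta(\omega)\omega_0-\omega\eta(\omega_0)$; writing $\omega=a\omega_1+b\omega_2$, $\omega_0=c\omega_1+d\omega_2$ this equals $(ad-bc)2\pi\rmi\in 2\pi\rmi\Z$ by Legendre's relation, so by the quasi-periodicity~\eqref{Equation:lambda} the meromorphic function $f_{u_1}(z)^{a_1}\cdots f_{u_s}(z)^{a_s}$ is $\Omega$-periodic and therefore lies in $\C\bigl(\wp(z),\wp'(z)\bigr)$. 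This is condition (ii) of Proposition~\ref{Proposition:independance_f_u}, which entails its condition (i); this yields both the equivalence (i)$\Leftrightarrow$(ii) of the corollary and the last displayed claim.

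\textbf{Parts (a) and (b).} For (a), suppose $P\bigl(z,\wp(z),\zeta(z),f_{u_1}(z),f_{u_2}(z),f_{u_1+u_2}(z)\bigr)\equiv 0$ on the domain for some nonzero $P$; it suffices to produce a single pair $(u_1,u_2)$ at which the functions of the variable $z$ alone are algebraically independent over $\C$, because specializing $P$ there forces $P=0$. I would choose $(u_1,u_2)$ with $\omega_1,\omega_2,u_1,u_2$ linearly independent over $\Q$ and $\zeta(u_1+u_2)\neq\zeta(u_1)+\zeta(u_2)$: the first requirement excludes only a countable union of affine lines, and the second only the zero set of the meromorphic function $(u_1,u_2)\mapsto\zeta(u_1+u_2)-\zeta(u_1)-\zeta(u_2)=\frac{\wp'(u_1)-\wp'(u_2)}{2\bigl(\wp(u_1)-\wp(u_2)\bigr)}$, which is not identically zero, so such pairs exist (and then $u_1,u_2,u_1+u_2\notin\Omega$ automatically). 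For such a pair, (c) applied with $s=3$ to $(u_1,u_2,u_1+u_2)$ shows that an algebraic dependence would force $(a_1,a_2,a_3)\in\Z^3\smallsetminus\{0\}$ with $(a_1+a_3)u_1+(a_2+a_3)u_2\in\Omega$, whence $a_1=a_2=-a_3\neq 0$, and then the second half of~\eqref{Equation:a1u1} reads $\zeta(u_1+u_2)=\zeta(u_1)+\zeta(u_2)$, a contradiction. For (b), the same scheme applies: by (c) with $s=2$ applied to $(u,\lambda u)$, a dependence forces $(a_1,a_2)\in\Z^2\smallsetminus\{0\}$ with $(a_1+\lambda a_2)u\in\Omega$ and $a_1\zeta(u)+a_2\zeta(\lambda u)=\eta\bigl((a_1+\lambda a_2)u\bigr)$; when $a_1+\lambda a_2\neq 0$ this confines $u$ to a countable set, and when $a_1+\lambda a_2=0$ then $\lambda=p/q\in\Q$, $(a_1,a_2)$ is an integer multiple of $(-p,q)$, and the relation becomes $p\zeta(u)=q\zeta\bigl((p/q)u\bigr)$, whose two sides differ as meromorphic functions of $u$ because the coefficients of $u^{-1}$ in $p\zeta(u)$ and $q\zeta\bigl((p/q)u\bigr)$ are $p$ and $q^2/p$, distinct since $\lambda\neq\pm 1$. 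Choosing $u$ outside this exceptional set (and with $u,\lambda u\notin\Omega$) gives algebraic independence of the five functions of $z$, and the contradiction argument concludes.

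The only genuinely delicate points are the non-vanishing of the auxiliary meromorphic functions $\zeta(u_1+u_2)-\zeta(u_1)-\zeta(u_2)$ and $p\zeta(u)-q\zeta\bigl((p/q)u\bigr)$; the former follows from the $\zeta$ addition formula, the latter from the Laurent expansion $\zeta(u)=u^{-1}+O(u^{3})$ at the origin together with $\lambda\neq\pm1$. Everything else reduces to linear algebra over $\Q$ driven by Legendre's relation, so I anticipate no serious obstacle. Alternatively, (a) and (b) may be obtained without choosing a generic specialization: Remark~\ref{Remark:majorationcoefficients} bounds the integers in the would-be relation uniformly by the degree of $P$, and the finitely many resulting conditions each cut out a proper analytic subset, so they cannot cover the whole domain.
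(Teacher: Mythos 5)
Your proof is correct, and for parts (a) and (b) it takes a genuinely different route from the paper's. For (c) the two arguments essentially coincide: you pass through condition (iv) of Proposition \ref{Proposition:independance_f_u} and eliminate via Legendre's relation \eqref{Equation:ConsequenceLegendre}, while the paper reads off the quasi-periodicity relation for $\zeta$ and lets $z\to 0$; both give \eqref{Equation:a1u1}, and the converse direction is identical. For (a) and (b) the paper argues on the whole family: Remark \ref{Remark:majorationcoefficients} furnishes, at \emph{every} point of the domain, an integer relation whose coefficients are bounded by the degree of $P$, and the contradiction is obtained by fixing $z$ and letting $u_1\to 0$ (resp.\ $u\to0$, then $z\to0$), the uniform bound being essential to control those limits. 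You instead produce a \emph{single} pair $(u_1,u_2)$ (resp.\ a single $u$) at which the specialized functions of $z$ are algebraically independent, and conclude $P=0$ by specialization; the existence of such a point is secured by excluding a countable union of proper analytic subsets, the only substantive checks being the non-vanishing of $\zeta(u_1+u_2)-\zeta(u_1)-\zeta(u_2)$ (addition formula) and of $p\zeta(u)-q\zeta(pu/q)$ for $p^2\neq q^2$ (Laurent expansion at $0$), both of which you handle correctly. Your version is shorter and avoids both the uniform bound of the Remark and the delicate limiting step; the paper's version is quantitatively sharper in spirit (it localizes the possible relations to finitely many bounded ones at every point), which is exactly the alternative you sketch in your closing sentence.
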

 
 \begin{proof}
 \null\hfill\break
(a) Assume that there is a nonzero polynomial $P\in\C[X_0,X_1,X_2,Y_1,Y_2,Y_3]$ satisfying
 \[
 P\bigl(z,\wp(z),\zeta(z), f_{u_1}(z), f_{u_2}(z), f_{u_1+u_2}(z)\bigr)=0.
 \]
 From Proposition \ref{Proposition:independance_f_u} and Remark \ref{Remark:majorationcoefficients}, we deduce that for all $(u_1,u_2)\in (\C\smallsetminus\Omega)^2$ such that $u_1+u_2\not\in\Omega$, there exists $(a_1,a_2,a_3)\in\Z^3\smallsetminus\{0\}$ with $\max\{|a_1|,|a_2|,|a_3|\}$ bounded above by the partial degrees of $P$ with
\begin{equation}\label{Equation:a_i-lambda}
 a_1\lambda(u_1,\omega)+a_2\lambda(u_2,\omega)+a_3\lambda(u_1+u_2,\omega)\in 2\pi\rmi\Z
 \end{equation}
 for all $\omega\in \Omega$. 
 
 Writing the relation 
 \[
\bigl(a_1\zeta(u_1)+a_2\zeta(u_2) + a_3\zeta(u_1+u_2)\bigr) \omega-
\bigl(a_1 u_1 +a_2 u_2+a_3(u_1+u_2)\bigr)\eta(\omega)\in 2\pi\rmi\Z
 \]
 for $\omega_1$ and $\omega_2$ yields the existence of two rational integers $b_1$ and $b_2$ such that 
 \[
 \begin{aligned}
 a_1 u_1 +a_2 u_2+a_3(u_1+u_2)&=b_1\omega_1+b_2\omega_2,
 \\
 a_1\zeta(u_1)+a_2\zeta(u_2) + a_3\zeta(u_1+u_2)&=b_1\eta_1+b_2\eta_2.
 \end{aligned}
 \]
 One deduces
\begin{equation}\label{Equation:zeta(z+au)}
 \zeta\bigl(z+a_1 u_1 +a_2 u_2+a_3(u_1+u_2)\bigr)=\zeta(z)+a_1\zeta(u_1)+a_2\zeta(u_2) + a_3\zeta(u_1+u_2).
\end{equation}
 We fix $z\in\C\smallsetminus\Omega$ and we let $u_2$ be sufficiently small not $0$ so that $u_2$ and $z+(a_2+a_3)u_2$ are not poles of $\wp$. We let $u_1$ tend to $0$. The existence of an upper bound for $\max\{|a_1|,|a_2|,|a_3|\}$, independent on $u_1$ and $u_2$, shows that the left hand side of \eqref{Equation:zeta(z+au)} is bounded above and that $\zeta(z)+a_2\zeta(u_2) + a_3\zeta(u_1+u_2)$ on the right hand side is also bounded above.
 Hence $a_1=0$ for $|u_1|$ and $|u_2|$ sufficiently small. In the same way $a_2=0$ for $|u_1|$ and $|u_2|$ sufficiently small. Then equation \eqref{Equation:a_i-lambda} gives a contradiction. 
 \\
 (b) Let $\lambda\in\C\smallsetminus\{0\}$. Assume that there is a nonzero polynomial $P\in\C[X_0,X_1,X_2,Y_1,Y_2]$ such that 
 \[
 P\bigl(z,\wp(z),\zeta(z), f_{u}(z), f_{\lambda u}(z)\bigr)=0.
 \]
The same argument as before, based on Proposition \ref{Proposition:independance_f_u} and Remark \ref{Remark:majorationcoefficients}, shows that for each $u\in\C\smallsetminus\Omega$ such that $\lambda u\not\in\Omega$, there exists $(a_1,a_2)\in\Z^2\smallsetminus\{0\}$, bounded by the degree of $P$, such that 
\[
 \zeta\bigl(z+(a_1 +a_2\lambda ) u \bigr)=\zeta(z)+a_1\zeta(u)+a_2\zeta(\lambda u).
 \]
 Letting $u$ tend to $0$ shows that $a_1+a_2/\lambda=0$. Fixing $u\not\in\Omega$ with $\lambda u\not\in\Omega$ and letting $z$ tend to $0$ shows that $(a_1+a_2\lambda)u\in\Omega$. This is true for all $u\not\in\Omega$ with $\lambda u\not\in\Omega$, hence $a_1+a_2\lambda=0$. We conclude $\lambda^2=1$. 
 \\
 (c) Assume \eqref{Equation:a1u1}. Then 
$a_1\lambda(u,\omega)+\cdots+a_s\lambda(u_s,\omega)\in 2\pi\rmi\Z$ for all $\omega\in\Omega$, hence the function 
\[
f_{u_1}(z)^{a_1}\cdots f_{u_s}(z)^{a_s}
 \] 
is periodic with $\Omega$ as periods, and therefore belongs to $\C\bigl(\wp(z),\wp'(z)\bigr)$. 

 Conversely, assume that there is a nonzero polynomial $P\in\C[X_0,X_1,X_2,Y_1,\dots,Y_s]$ such that 
 \[
 P\bigl(z,\wp(z),\zeta(z), f_{u_1}(z),\dots, f_{u_s}(z)\bigr)=0.
 \]
Proposition \ref{Proposition:independance_f_u} and Remark \ref{Remark:majorationcoefficients} show that there exists $(a_1,\dots,a_s)\in\Z^s\smallsetminus\{0\}$, with absolute values bounded by the degree of $P$, such that 
$a_1\lambda(u,\omega)+\cdots+a_s\lambda(u_s,\omega)\in 2\pi\rmi\Z$ for all $\omega\in\Omega$, hence
\[
 \zeta (z+a_1u +\cdots+a_su_s )=\zeta(z)+a_1\zeta(u_1)+\cdots+a_s\zeta(u_s).
 \]
 Letting $z$ tend to $0$, we deduce $a_1u +\cdots+a_su_s\in\Omega$ and \eqref{Equation:a1u1} follows. 
 \hfill $\square$
 \end{proof}
 
 \noindent
 {\em Remarks.} 
 \\
 $\bullet$ In (c), assertion (ii) with $s=1$ corresponds to $s=1$, $t_0=0$ in Proposition \ref{Proposition:AlgebraicIndependenceSerreFunctions}.
 \\
$\bullet$ Given $a_1$ and $a_2$ in $\Z\smallsetminus\{0\}$, the numbers $u\in\C\smallsetminus\Omega$ such that $v=-a_1u/a_2$ satisfies $v\not\in\Omega$ and $\zeta(v)=-a_1\zeta(u)/a_2$ are the zeroes of the derivative of the function 
 \[
 \frac{\sigma(-a_1z/a_2)^{a_2^2}}{\sigma(z)^{a_1^2}};
 \]
 see the multiplication formula for the Weierstrass zeta function, for instance \cite[Table 2]{BW1}.

%%%%%%%%%%%%%%%%%%%%%%%%%
\subsection{With CM and without the sigma function}%\label{SS:CMcase}
The next result implies that when the elliptic curve $E$ is CM, for $t\in\C\smallsetminus\{0\}$, $u\in\C\smallsetminus\Omega\otimes_{\Z}\Q$ and $\alpha\in\End(E)\smallsetminus\Z$, the six functions 
\[
z, \rme^{tz}, \wp(z),\zeta(z), f_u(z), f_u(\alpha z)
\]
are algebraically independent. This is related with the fact that when $u$ is not a torsion point, the nontrivial endomorphisms of $E$ do not extend to endomorphisms of the extension of $E$ by $\G_m$ associated with $u$ --- see \cite{B}.

\begin{proposition}\label{Proposition:independance_f_uCM}
Let $t_1,\dots,t_r$ be complex numbers linearly independent over $\Q$ and $u_1,\dots,u_s$ complex numbers such that $\omega_1,,u_1,\dots,u_s$ are linearly independent over $k$. 
Let $\alpha_1,\dots,\alpha_s$ be elements of $\End(E)\smallsetminus\Z$. Then the $3+r+2s $ functions 
\[ 
z,	\wp(z), \zeta(z),
	\rme^{t_1z},\dots,\rme^{t_rz},f_{u_1}(z),\dots,f_{u_s}(z),f_{u_1}(\alpha_1z),\dots,f_{u_s}(\alpha_sz)
\] 
	are algebraically independent.
\end{proposition}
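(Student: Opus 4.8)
The plan is to run the argument of Proposition~\ref{Proposition:sansCMnisigma} after extending Proposition~\ref{Proposition:independance_f_u} so as to accommodate the functions $f_{u_i}(\alpha_i z)$. Since $\alpha_i\in\End(E)$ satisfies $\alpha_i\Omega\subseteq\Omega$, the function $z\mapsto f_{u_i}(\alpha_i z)$ is meromorphic on $\C$ and, for $\omega\in\Omega$,
\[
f_{u_i}\bigl(\alpha_i(z+\omega)\bigr)=f_{u_i}(\alpha_i z)\exp\bigl(\lambda(u_i,\alpha_i\omega)\bigr),
\]
with $\omega\mapsto\lambda(u_i,\alpha_i\omega)$ still $\Z$-linear on $\Omega$ because $\alpha_i\omega_1,\alpha_i\omega_2\in\Omega$. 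So $f_{u_i}(\alpha_i z)$ enters the scheme of Proposition~\ref{Proposition:independance_f_u} on exactly the same footing as $\rme^{t_jz}$ and $f_{u_i}(z)$. Repeating that proof verbatim (specialise $z=z_0+a\omega_1+b\omega_2$ with a generic $z_0$, collect the vanishing of the polynomial into an identity $Q_{z_0}(a,b,\dots)=0$ for all $(a,b)\in\Z^2$, then apply Lemma~\ref{Lemma:vanderMonde2variables}) shows that the $3+r+2s$ functions in the statement are algebraically dependent if and only if the $2+r+2s$ vectors
\[
\begin{pmatrix}2\pi\rmi\\0\end{pmatrix},\ \begin{pmatrix}0\\2\pi\rmi\end{pmatrix},\ \begin{pmatrix}t_j\omega_1\\t_j\omega_2\end{pmatrix}_{1\leqslant j\leqslant r},\ \begin{pmatrix}\lambda(u_i,\omega_1)\\\lambda(u_i,\omega_2)\end{pmatrix}_{1\leqslant i\leqslant s},\ \begin{pmatrix}\lambda(u_i,\alpha_i\omega_1)\\\lambda(u_i,\alpha_i\omega_2)\end{pmatrix}_{1\leqslant i\leqslant s}
\]
of $\C^2$ are linearly dependent over $\Q$.

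Next I would assume such a relation, with integers $h_0,h'_0$ (coefficients of $\begin{pmatrix}2\pi\rmi\\0\end{pmatrix}$ and $\begin{pmatrix}0\\2\pi\rmi\end{pmatrix}$), $h_1,\dots,h_r$, $g_1,\dots,g_s$, $g'_1,\dots,g'_s$, not all zero. As in Proposition~\ref{Proposition:sansCMnisigma} I would eliminate $T:=h_1t_1+\cdots+h_rt_r$ by forming $\omega_2$ times the first coordinate minus $\omega_1$ times the second. Using \eqref{Equation:ConsequenceLegendre} for the terms produced by the $f_{u_i}(z)$, and noting that in $\lambda(u_i,\alpha_i\omega_1)\omega_2-\lambda(u_i,\alpha_i\omega_2)\omega_1$ the $\zeta(u_i)$ contributions cancel (because $(\alpha_i\omega_1)\omega_2=\alpha_i\omega_1\omega_2=(\alpha_i\omega_2)\omega_1$), one is left with
\[
h_0\omega_2-h'_0\omega_1+\sum_{i=1}^s g_iu_i+\frac1{2\pi\rmi}\sum_{i=1}^s g'_i\,\mu_i\,u_i=0,\qquad
\mu_i:=\eta(\alpha_i\omega_1)\omega_2-\eta(\alpha_i\omega_2)\omega_1 .
\]
The crucial step — where the CM hypothesis intervenes — evaluates $\mu_i$: writing $\alpha_i\omega_1=a\omega_1+b\omega_2$, $\alpha_i\omega_2=c\omega_1+d\omega_2$ with $a,b,c,d\in\Z$, so that (using $\Z$-linearity of $\eta$) $\eta(\alpha_i\omega_1)=a\eta_1+b\eta_2$ and $\eta(\alpha_i\omega_2)=c\eta_1+d\eta_2$, a short calculation with Legendre's relation \eqref{Equation:Legendre} gives $\alpha_i\mu_i=(ad-bc)\,2\pi\rmi$; since $ad-bc=\alpha_i\overline{\alpha_i}$ is the nonzero norm of $\alpha_i$, this yields
\[
\mu_i=2\pi\rmi\,\overline{\alpha_i},
\]
and $\overline{\alpha_i}\in k\smallsetminus\Q$ because $\alpha_i\in\End(E)\smallsetminus\Z$.

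Substituting this and writing $\omega_2=\tau\omega_1$ with $\tau\in k$, the displayed identity becomes a $k$-linear relation
\[
(h_0\tau-h'_0)\,\omega_1+\sum_{i=1}^s\bigl(g_i+g'_i\overline{\alpha_i}\bigr)u_i=0
\]
among $\omega_1,u_1,\dots,u_s$. By hypothesis these are linearly independent over $k$, so every coefficient vanishes; $\tau\notin\Q$ then forces $h_0=h'_0=0$, and $\overline{\alpha_i}\notin\Q$ forces $g_i=g'_i=0$ for each $i$. Feeding this back into the original relation leaves $T\omega_1=T\omega_2=0$, whence $T=h_1t_1+\cdots+h_rt_r=0$ and then $h_1=\cdots=h_r=0$ by the $\Q$-linear independence of the $t_j$ --- contradicting that the coefficient vector was nonzero. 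Hence no such relation exists and the functions are algebraically independent. I expect the only genuinely substantive point to be the identity $\mu_i=2\pi\rmi\,\overline{\alpha_i}$ (equivalently: under $\alpha_i$ the quasi-periods transform through the conjugate/dual endomorphism), since it is exactly what turns the $\Q$-linear elimination of Proposition~\ref{Proposition:sansCMnisigma} into a $k$-linear one and lets the stronger hypothesis ``$\omega_1,u_1,\dots,u_s$ linearly independent over $k$'' close the argument; the extension of Proposition~\ref{Proposition:independance_f_u} itself is routine bookkeeping (one checks that a generic $z_0$ still makes $Q_{z_0}$ a nonzero polynomial and that Lemma~\ref{Lemma:vanderMonde2variables} applies unchanged).
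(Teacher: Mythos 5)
Your proof is correct and follows essentially the same route as the paper: extend Proposition~\ref{Proposition:independance_f_u} to the translated functions $f_{u_i}(\alpha_i z)$ via \eqref{Equation:lambda} and Lemma~\ref{Lemma:vanderMonde2variables}, then eliminate $h_1t_1+\cdots+h_rt_r$ using Legendre's relation to land on a $k$-linear relation among $\omega_1,u_1,\dots,u_s$. The paper phrases your key identity $\mu_i=2\pi\rmi\,\overline{\alpha_i}$ as $\alpha_i\omega_2\eta(\alpha_i\omega_1)-\alpha_i\omega_1\eta(\alpha_i\omega_2)=2\pi\rmi k_i$ with $k_i\in\Z\smallsetminus\{0\}$ via \eqref{Equation:LegendreGeneralise} (so its coefficient $\alpha_i^{-1}k_i$ is exactly your $\overline{\alpha_i}$); your explicit computation of $k_i$ as the norm of $\alpha_i$ is a correct and slightly more informative version of the same step.
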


\begin{proof}
Since $\End(E)\cap\Q=\Z$, the assumption of Proposition \ref{Proposition:independance_f_uCM} implies that the numbers $\alpha_i$ are irrational. 

We assume that $t_1,\dots,t_r$ are linearly independent over $\Q$ and that 
\[ 
	P(Y_1,Y_2,Y_3,X_1,\dots,X_{r+2s})=\sum_{\um}\sum_{\uell} p_{\um,\uell} Y_1^{m_1} Y_2^{m_2} Y_3^{m_3}X_1^{\ell_1}\cdots X_{r+2s}^{\ell_{r+2s}}
\] 
	is a nonzero polynomial in $\C[Y_1,Y_2,Y_3,,X_1,\dots,X_{r+2s}]$ such that the meromorphic function 
\[ 
	F(z)= P\bigl(z,\wp(z),\zeta(z), \rme^{t_1z},\dots, \rme^{t_rz}, f_{u_1}(z),\dots,f_{u_s}(z),f_{u_1}(\alpha_1z),\dots,f_{u_s}(\alpha_sz)\bigr)
\] 
is $0$. We will prove that $\omega_1,,u_1,\dots,u_s$ are linearly dependent over $k$. 
	
	For $z_0\in\C\smallsetminus\Omega$ let $Q_{z_0}(T_1,T_2,X_1,\dots,X_{r+2s}) \in\C[T_1,T_2,X_1,\dots,X_{r+2s}]$
	denote the polynomial 
\[ 
	\begin{aligned}
		Q_{z_0}(T_1,T_2,&X_1,\dots,X_{r+2s}) = 
		\\
		&
		\sum_{\um} \sum_{\uell} p_{\um,\uell} (z_0+T_1\omega_1+T_2\omega_2)^{m_1}
		\wp(z_0)^{m_2}
		(\zeta(z_0)+T_1\eta_1+T_2\eta_2)^{m_3} \rme^{(\ell_1t_1+\cdots+\ell_rt_r)z_0}
		\\
		&
		\bigl(f_{u_1}(z_0)\bigr)^{\ell_{r+1}}\cdots \bigl(f_{u_s}(z_0)\bigr)^{\ell_{r+s}} 
				\bigl(f_{u_1}(\alpha_1z_0)\bigr)^{\ell_{r+s+1}}\cdots \bigl(f_{u_s}(\alpha_sz_0)\bigr)^{\ell_{r+2s}}
		X_1^{\ell_1}\cdots X_{r+2s}^{\ell_{r+2s}}.
	\end{aligned}
\] 
Since the coefficients $p_{\um,\uell} $ are not all zero, there exists $z_0\in\C\smallsetminus\Omega$ such that $Q_{z_0}$ is not the zero polynomial.
For $\omega=a\omega_1+b\omega_2\in\Omega$ we deduce from $F(z_0+\omega)=0$:
\[ 
	\begin{aligned}
	\sum_{\um} \sum_{\uell} p_{\um,\uell} 
	&(z_0+a\omega_1+b\omega_2)^{m_1}
		\wp(z_0)^{m_2}
		(\zeta(z_0)+a\eta_1+b\eta_2)^{m_3}
\rme^{(\ell_1t_1+\cdots+\ell_rt_r)(z_0+\omega)}
		\\
		&
	\bigl(f_{u_1}(z_0+\omega)\bigr)^{\ell_{r+1}}\cdots \bigl(f_{u_s}(z_0+\omega)\bigr)^{\ell_{r+s}}
	\bigl(f_{u_1}(\alpha_1(z_0+\omega))\bigr)^{\ell_{r+s+1}}\cdots \bigl(f_{u_s}(\alpha_s(z_0+\omega))\bigr)^{\ell_{r+2s}}=0.
\end{aligned}
\] 
Since $\alpha_i\omega\in\Omega$ for $1\leqslant i\leqslant s$, we may use \eqref{Equation:lambda}; hence
\[ 
	Q_{z_0}(a,b,\rme^{t_1\omega},\dots,\rme^{t_r\omega}, 
	\rme^{\lambda(u_1,\omega)},\dots,\rme^{\lambda(u_s,\omega)},
	\rme^{\lambda(u_1,\alpha_1\omega)},\dots,\rme^{\lambda(u_s,\alpha_s\omega)})=0.
\] 
From (ii)$\Rightarrow$(iii) in Lemma \ref{Lemma:vanderMonde2variables} we deduce that the $2+r+2s$ elements 
\[
\begin{aligned}
\begin{pmatrix} 2\pi\rmi \\ 0 \end{pmatrix}, 
\begin{pmatrix} 0\\ 2\pi\rmi \end{pmatrix}, 
\begin{pmatrix} t_1\omega_1\\ t_1\omega_2\end{pmatrix}, \dots, 
\begin{pmatrix} t_r\omega_1\\ t_r\omega_2 \end{pmatrix}, \; 
&
\begin{pmatrix} \lambda(u_1,\omega_1)\\ \lambda(u_1,\omega_2)\end{pmatrix} ,\ldots, 
\begin{pmatrix} \lambda(u_s,\omega_1)\\ \lambda(u_s,\omega_2) \end{pmatrix}, \; 
\\
&
\begin{pmatrix} \lambda(u_1,\alpha_1\omega_1)\\ \lambda(u_1,\alpha_1\omega_2)\end{pmatrix} ,\ldots, 
\begin{pmatrix} \lambda(u_s,\alpha_s\omega_1)\\ \lambda(u_s,\alpha_s\omega_2) \end{pmatrix} 
\end{aligned}
\]
of $\C^2$ are linearly dependent over $\Q$. Hence there exist rational integers $h_0,h'_0,h_1,\dots,h_{r+2s}$, not all zero, such that
\begin{equation}\label{Equation2ipih0}
	\left\{
	\begin{aligned}
		2\pi \rmi h_0+(h_1t_1+\cdots+h_rt_r)\omega_1+&h_{r+1}\lambda(u_1,\omega_1)+\cdots+h_{r+s}\lambda(u_s,\omega_1)
		\\
		&+h_{r+s+1}\lambda(u_1,\alpha_1\omega_1)+\cdots+h_{r+2s}\lambda(u_s,\alpha_s\omega_1)=0,
		\\
		2\pi \rmi h'_0+(h_1t_1+\cdots+h_rt_r)\omega_2+&h_{r+1}\lambda(u_1,\omega_2)+\cdots+h_{r+s}\lambda(u_s,\omega_2)
		\\&
		+h_{r+s+1}\lambda(u_1,\alpha_1\omega_2)+\cdots+h_{r+s}\lambda(u_s,\alpha_s\omega_2)=0.
	\end{aligned}
	\right.
\end{equation}
Since $t_1,\dots,t_r$ are linearly independent over $\Q$ and since $\omega_2/\omega_1\not\in\Q$, the numbers $h_{r+1},\dots,h_{r+2s}$ are not all zero. 
	We eliminate $h_1t_1+\cdots+h_rt_r$ by multiplying the first equation of \eqref{Equation2ipih0} by $\omega_2$ and subtracting the second equation multiplied by $\omega_1$. Recall \eqref{Equation:ConsequenceLegendre}. Since $\alpha_i\not=0$, the two periods $\omega'_1:=\alpha_i\omega_1$ and $\omega'_2:=\alpha_i\omega_2$ are linearly independent, hence from \eqref{Equation:LegendreGeneralise} it follows that there exist nonzero integers $k_1,\dots,k_s$ such that 
$\alpha_i\omega_2\eta(\alpha_i\omega_1)-\alpha_i\omega_1\eta(\alpha_i\omega_2)=2\pi \rmi k_i$ for $1\leqslant i\leqslant s$;
then
\[
	\lambda(u_i,\alpha_i\omega_1)\omega_2-\lambda(u_i,\alpha_i\omega_2)\omega_1=\alpha_i^{-1}k_iu_i.
\] 
Hence
\[ 
	(h_{r+1}+\alpha_1^{-1}h_{r+s+1} k_1)u_1+\cdots+(h_{r+s} +\alpha_s^{-1} h_{r+2s}k_s) u_s = h_0\omega_2-h'_0\omega_1.
\] 
Since $\alpha_i\not\in\Q$ and $k_i\not=0$ for $1\leqslant i\leqslant s$, the coefficients $h_{r+i}+\alpha_i^{-1}h_{r+s+i} k_i$ ($1\leqslant i\leqslant s$) are not all zero. 
Hence $\omega_1, u_1,\dots,u_s$ are linearly dependent over $k$.
 \hfill $\square$
 \end{proof}

%%%%%%%%%%%%%%%%%%%%%%%%%
\subsection{With the sigma function}%\label{SS:WithSigma}

\bigskip
We will deduce Theorem \ref{Theorem:AlgebraicIndependenceFunctions} from the following result.

\begin{proposition}\label{Proposition:sigma}
Let $t_1,\dots,t_r$, $u_1,\dots,u_s$ be complex numbers with $u_i\not\in\Omega$.
\\
1.
Then the Weierstrass sigma function is transcendental over the field 
\[
\C\bigl(z, \wp(z), \zeta(z), \rme^{t_1z},\dots, \rme^{t_rz}, f_{u_1}(z),\dots,f_{u_s}(z)\bigr).
\]
2. Assume further that the elliptic curve $E$	has complex multiplication. Let $\alpha_1,\dots,\alpha_s$ be elements in $\End(E)\smallsetminus\Z$. Then
the Weierstrass sigma function is transcendental over the field 
\[
\C\bigl(z, \wp(z), \zeta(z), \rme^{t_1z},\dots, \rme^{t_rz}, f_{u_1}(z),\dots,f_{u_s}(z),f_{u_1}(\alpha_1z),\dots,f_{u_s}(\alpha_sz)\bigr).
\]
\end{proposition}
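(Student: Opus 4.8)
\emph{Strategy and reduction.} The plan is to reduce, by elementary differential algebra, to the assertion that $\zeta$ is not a rational multiple of a logarithmic derivative in the field at hand, and then to rule this out using that $\zeta$ is a genuinely new ``primitive'' (antiderivative) over a suitable differential subfield. In either case let $F$ denote the field over which $\sigma$ is to be shown transcendental. By the case $r=s=0$ of Proposition~\ref{Proposition:sansCMnisigma}, the functions $z,\wp(z),\zeta(z)$ are algebraically independent, so one may extend $\{z,\wp(z),\zeta(z)\}$ to a transcendence basis $\mathcal{B}$ of $F$ whose elements are chosen among the displayed generators $z,\wp,\zeta,\rme^{t_iz},f_{u_j}(z)$ (together with the $f_{u_j}(\alpha_jz)$ in case~2). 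Since $F$ is algebraic over $F_0:=\C(\mathcal{B})$ and transcendence of $\sigma$ over $F_0$ forces transcendence over $F$, it suffices to treat $F_0$. Set $L_0:=\C\bigl(\mathcal{B}\smallsetminus\{z,\zeta\},\,\wp'(z)\bigr)$ and $L:=L_0(z,\zeta)=F_0\bigl(\wp'(z)\bigr)$; as $[L:F_0]\leqslant 2$, it is enough to show $\sigma$ transcendental over $L$. The essential point is that $L$ is a differential subfield of the field of meromorphic functions on $\C$, whose field of constants is $\C$: indeed $z'=1$, $\wp''\in\C(\wp)$ because $\wp'^{\,2}=4\wp^3-g_2\wp-g_3$, $\zeta'=-\wp$, $(\rme^{t_iz})'=t_i\rme^{t_iz}$, and $(f_u)'=\frac{1}{2}\,\frac{\wp'(z)-\wp'(u)}{\wp(z)-\wp(u)}\,f_u$ by the addition theorem for $\zeta$; in case~2 one uses that $\wp(\alpha_jz),\wp'(\alpha_jz)\in\C\bigl(\wp(z),\wp'(z)\bigr)$ since $\alpha_j\in\End(E)$, which gives $\bigl(f_{u_j}(\alpha_jz)\bigr)'\in L_0$ as well. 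Finally, by construction $z$ and $\zeta$ are algebraically independent over $L_0$.

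\emph{Passing to a power of $\sigma$.} Suppose $\sigma$ is algebraic over $L$, with monic minimal polynomial $Y^d+a_{d-1}Y^{d-1}+\cdots+a_0\in L[Y]$. Differentiating, using $\sigma'=\zeta\sigma$, and subtracting $d\zeta$ times the relation gives $\sum_{i=0}^{d-1}\bigl(a_i'-(d-i)\zeta a_i\bigr)\sigma^i=0$, hence $a_i'=(d-i)\zeta a_i$ for every $i<d$ by minimality. Here $a_0\neq 0$, since otherwise $Y$ would divide the minimal polynomial; consequently $\bigl(a_0\sigma^{-d}\bigr)'=\bigl(a_0'-d\zeta a_0\bigr)\sigma^{-d}=0$, so $a_0\sigma^{-d}$ is a nonzero constant and $\sigma^d\in L$. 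Thus everything reduces to proving that there is no $g\in L^{\times}$ and no integer $d\geqslant 1$ with $g'/g=d\zeta$.

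\emph{Ruling out $g'/g=d\zeta$.} Put $E:=L_0(z)$; this is a differential field with field of constants $\C$, and $\zeta$ is transcendental over $E$ with $\zeta'=-\wp\in L_0\subseteq E$, so $L=E(\zeta)$. Assume $g\in L^{\times}$ satisfies $g'/g=d\zeta\in E[\zeta]$, and factor $g=c\prod_i p_i^{e_i}$ with $c\in E^{\times}$, the $p_i$ distinct monic irreducible elements of the polynomial ring $E[\zeta]$ (each of $\zeta$-degree $\geqslant 1$, as $E$ is a field) and $e_i\in\Z\smallsetminus\{0\}$. Writing the derivation on $E[\zeta]$ as $\delta=\delta_E+(-\wp)\partial_\zeta$, one has $\deg_\zeta\delta(p_i)<\deg_\zeta p_i$; moreover $\delta(p_i)\neq 0$, because its next-to-leading $\zeta$-coefficient has the form $\delta_E(b)-(\deg_\zeta p_i)\,\wp$ with $b\in E$, and this cannot vanish: otherwise $\wp$ would be the derivative of an element of $E$, whence $\zeta\in E$, a contradiction. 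Therefore $p_i$ does not divide $\delta(p_i)$. Now $\sum_i e_i\,\delta(p_i)/p_i=g'/g-c'/c=d\zeta-c'/c$ lies in $E[\zeta]$; bringing the left-hand sum to the common denominator $\prod_i p_i$ and reducing modulo each $p_i$ (using that $p_i$ is coprime to $\delta(p_i)$, to $\prod_{k\neq i}p_k$, and to $e_i$) forces every $e_i$ to vanish. Hence $g=c\in E$, so $g'/g=c'/c\in E$, contradicting $d\zeta\notin E$. This proves $\sigma$ transcendental over $L$, and therefore over $F_0$ and over $F$.

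\emph{Main obstacle.} The one genuinely delicate step is the first: one must check that the ambient field is closed under $\rmd/\rmd z$ — so that the Serre functions, and in the CM case $\wp(\alpha_jz)$ and $\wp'(\alpha_jz)$, introduce no new transcendentals — while at the same time arranging $z$ and $\zeta$ to form an iterated primitive tower over a differential subfield $L_0$ over which they remain algebraically independent; the passage to a transcendence basis chosen among the given generators is precisely what reconciles these two requirements. Everything after that is formal.
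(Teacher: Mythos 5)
Your proof is correct, but it follows a genuinely different route from the paper. The paper's argument is analytic: writing the putative relation as $F_0(z)\sigma(z)^{L_0}+F_1(z)=0$, it chooses (Lemma \ref{Lemma:sigma}) a period $\omega_1$ with $\Real(\omega_1\eta_1)>0$ and a generic base point $z_0$ (Lemma \ref{Lemma:ZeroesMeromorphicFunctions}), then compares growth along $z_0+a\omega_1$: the quasi-periodicity \eqref{Equation:sigma} forces $|F_0(z_0+a\omega_1)|\leqslant \rme^{-|a|^2\Real(\omega_1\eta_1)}c^{|a|}$, while $F_0(z_0+a\omega_1)=Q(a,v_1^a,\dots)$ is an exponential polynomial in $a$ to which the quantitative Vandermonde bound of Lemma \ref{Lemma:vanderMonde1variable2} applies, giving the contradiction. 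You instead run a purely differential-algebraic (Ostrowski/Liouville-type) argument: after the reduction to the differential field $L=E(\zeta)$ with $E=L_0(z)$ — where the only point needing care, and which you handle correctly, is that the logarithmic derivatives of the $\rme^{t_iz}$, $f_{u_j}(z)$ and $f_{u_j}(\alpha_jz)$ all lie in $\C(\wp,\wp')$, so that a transcendence basis chosen among the generators yields a differential subfield over which $z$ and $\zeta$ remain algebraically independent — the minimal-polynomial computation reduces everything to the nonexistence of $g\in L^\times$ with $g'/g=d\zeta$, which you exclude by the standard partial-fraction/irreducible-factor argument in $E[\zeta]$, using that $\wp$ is not a derivative in $E$ (else $\zeta\in E$). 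Your approach buys conceptual clarity and generality (it treats parts 1 and 2 uniformly and needs no growth estimates at all, only that $\zeta$ is a new primitive and $\sigma=\exp(\int\zeta)$), whereas the paper's approach keeps the proof within the same quantitative Vandermonde machinery used throughout the rest of the paper and exhibits concretely which analytic feature of $\sigma$ (its order-two growth along lattice rays) is responsible for its transcendence.
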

 
We will use two preliminary results. 

\begin{lemma}\label{Lemma:ZeroesMeromorphicFunctions}
Let $F$ be a nonzero meromorphic function in $\C$, let $\omega\in\C\smallsetminus\{0\}$ and let $\calU$ be a nonempty open subset of $\C$ contained in $\{z\in\C \mid 0<\Real(z/\omega)<1\}$. Then there exists $z_0\in\calU$ such that for all $a\in\Z$, $z_0+a\omega$ is neither a pole nor a zero of $F$.
\end{lemma}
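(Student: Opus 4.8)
The plan is a countability (equivalently, measure-zero) argument; the hypothesis that $\calU$ lies in the strip $\{0<\Real(z/\omega)<1\}$ plays no role in the proof and is recorded only for the later applications. Let $S$ be the set of points of $\C$ that are either a zero or a pole of $F$. Since $F$ is a nonzero meromorphic function on $\C$, near every $p\in\C$ one may write $F(z)=(z-p)^{k}u(z)$ with $k\in\Z$ and $u$ holomorphic and non-vanishing near $p$; hence the zeros of $F$ and the poles of $F$ are isolated, so $S$ is a discrete closed subset of $\C$, and in particular $S$ is countable. (Equivalently $S$ has Lebesgue measure zero in $\C$, identified with $\R^2$; the vanishing of the measure of the zero set of a nonzero meromorphic function is one of the facts already invoked in the proof of Proposition~\ref{Prop:SSPIF}.)

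Now set
\[
T:=\bigcup_{a\in\Z}\bigl(S-a\omega\bigr)=\bigl\{\, w\in\C \;:\; w+a\omega\in S \text{ for some } a\in\Z \,\bigr\}.
\]
As a countable union of countable sets, $T$ is countable, hence of Lebesgue measure zero. Since $\calU$ is a nonempty open subset of $\C$ it has positive Lebesgue measure, so $\calU\not\subset T$, and we may choose any $z_0\in\calU\smallsetminus T$. By the definition of $T$, for every $a\in\Z$ we have $z_0\notin S-a\omega$, that is $z_0+a\omega\notin S$; by the definition of $S$ this means precisely that $z_0+a\omega$ is neither a zero nor a pole of $F$, which is the assertion. (One could equally avoid measure theory and use the Baire category theorem: each $S-a\omega$ is closed with empty interior, so $T$ has empty interior and cannot contain the nonempty open set $\calU$.)

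There is essentially no obstacle; the only point needing a line of justification is that the zero set and pole set of a nonzero meromorphic function on $\C$ are countable, which follows from their discreteness. For the record: when $\calU$ is moreover contained in the strip $\{z\in\C\mid 0<\Real(z/\omega)<1\}$, the chosen $z_0$ automatically has the property that the translates $z_0+a\omega$ ($a\in\Z$) are pairwise distinct, since $\Real\bigl((z_0+a\omega)/\omega\bigr)=\Real(z_0/\omega)+a$ lies in $(0,1)$ only for $a=0$; this is what makes the lemma usable in the subsequent specialisation arguments.
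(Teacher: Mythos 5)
Your proof is correct and rests on the same idea as the paper's: the zero/pole set of a nonzero meromorphic function is countable, whereas the nonempty open set $\calU$ is uncountable, so some point of $\calU$ avoids all $\Z\omega$-translates of that set. The only cosmetic difference is that the paper runs the argument by contradiction and uses the strip hypothesis to make the map $z_0\mapsto z_0+a_{z_0}\omega$ injective, while you take the (still countable) union of all translates and thereby observe, correctly, that the strip hypothesis is not needed for the lemma itself.
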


\begin{proof}[Proof of Lemma \ref{Lemma:ZeroesMeromorphicFunctions}]
Assume that $F$ is a meromorphic function in $\C$ such that, for each $z_0\in\calU$, there exists $a_{z_0}\in\Z$ such that $z_0+a_{z_0}\omega$ is either a pole or a zero of $F$. Since the points $z_0+a_{z_0}\omega$ are pairwise distinct, the set $\{z_0+a_{z_0}\omega\mid z_0\in\calU\}$ is not countable, hence $F=0$.
 \hfill $\square$
 \end{proof}

\begin{lemma}\label{Lemma:sigma}
 There exists $\omega\in\Omega$ such that, for $\eta=\eta(\omega)$, we have
$\Real(\omega\eta)>0$.
 \end{lemma}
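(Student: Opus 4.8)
The plan is to show that the real quadratic form $q(\omega):=\Real\bigl(\omega\,\eta(\omega)\bigr)$ on $\Omega$ takes a strictly positive value; this is an elementary consequence of Legendre's relation \eqref{Equation:Legendre} and needs no transcendence input. First I would extend $\eta$ to an $\R$-linear map $\widetilde\eta\colon\C\to\C$ by $\widetilde\eta(a\omega_1+b\omega_2):=a\eta_1+b\eta_2$ for $a,b\in\R$, which is legitimate because $\omega_1,\omega_2$ form an $\R$-basis of $\C$ (as $\omega_2/\omega_1\notin\R$). Every $\R$-linear endomorphism of $\C$ has the form $\omega\mapsto\alpha\omega+\beta\overline{\omega}$ for unique $\alpha,\beta\in\C$, so $\omega\,\widetilde\eta(\omega)=\alpha\omega^2+\beta|\omega|^2$ and hence
\[
q(\omega)=\Real\bigl(\alpha\omega^2\bigr)+\Real(\beta)\,|\omega|^2\qquad\text{for all }\omega\in\C .
\]

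The crucial point is the value of $\beta$. Solving the system $\eta_i=\alpha\omega_i+\beta\overline{\omega_i}$ ($i=1,2$) by Cramer's rule, and evaluating the numerator with Legendre's relation \eqref{Equation:Legendre}, gives
\[
\beta=\frac{\omega_1\eta_2-\omega_2\eta_1}{\omega_1\overline{\omega_2}-\omega_2\overline{\omega_1}}
=\frac{-2\pi\rmi}{-2\rmi\,\im\bigl(\overline{\omega_1}\omega_2\bigr)}
=\frac{\pi}{\im\bigl(\overline{\omega_1}\omega_2\bigr)},
\]
and $\im(\overline{\omega_1}\omega_2)=|\omega_1|^2\,\im(\omega_2/\omega_1)>0$ by the standing convention on the order of $\omega_1,\omega_2$. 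Thus $\beta=\Real(\beta)>0$. (This incidentally recovers the classical closed form of the quasi-period map, but only the value of $\beta$ is used here.)

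Finally I would evaluate $q$ at $1$ and at $\rmi$: by the displayed formula $q(1)+q(\rmi)=\bigl(\Real\alpha+\beta\bigr)+\bigl(-\Real\alpha+\beta\bigr)=2\beta>0$, so $q(v_0)>0$ for at least one $v_0\in\{1,\rmi\}$. Since $q$ is continuous and $\Q\omega_1+\Q\omega_2$ is dense in $\C$, there is a point $v=(a\omega_1+b\omega_2)/N$, with $a,b\in\Z$ and $N\in\Z_{>0}$, at which $q(v)>0$; then $\omega:=a\omega_1+b\omega_2=Nv$ lies in $\Omega$ and satisfies $\Real\bigl(\omega\,\eta(\omega)\bigr)=q(\omega)=N^2q(v)>0$, which is the assertion. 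The only point requiring care is the sign of $\im(\overline{\omega_1}\omega_2)$, which is pinned down precisely by the convention under which \eqref{Equation:Legendre} holds with $+2\pi\rmi$ on the right-hand side; there is no genuine obstacle.
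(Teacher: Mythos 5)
Your proof is correct, and it takes a genuinely different route from the paper's. The paper argues by contradiction via growth: $\sigma$ is entire of order $\geqslant 2$ (it vanishes on the lattice), whereas if $\Real\bigl(\omega\eta(\omega)\bigr)\leqslant 0$ held for every $\omega\in\Omega$, the functional equation \eqref{Equation:sigma} would force $\sigma$ to have order $\leqslant 1$. You instead make the quadratic form $q(\omega)=\Real\bigl(\omega\eta(\omega)\bigr)$ completely explicit: writing the $\R$-linear extension of $\eta$ as $\omega\mapsto\alpha\omega+\beta\overline{\omega}$, Legendre's relation pins down $\beta=\pi/\im(\overline{\omega_1}\omega_2)>0$ (your sign check against the convention $\im(\omega_2/\omega_1)>0$ is the one delicate point, and it is handled correctly), so that $q(1)+q(\rmi)=2\beta>0$; homogeneity $q(Nv)=N^2q(v)$ plus density of $\Q\omega_1+\Q\omega_2$ then transfers positivity to a lattice point. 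Your argument is purely algebraic, needs no function theory, and yields strictly more than the lemma: it identifies the positive trace part $\beta=\pi/A$ ($A$ the area of a fundamental parallelogram) of the quadratic form, hence shows $q$ is positive on a full open cone of directions. The paper's argument is softer and shorter to state given that the order-$2$ fact is already available, but gives only the bare existence statement. Both proofs are valid; yours is arguably the more informative one.
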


\begin{proof}[Proof of Lemma \ref{Lemma:sigma}]
Recall that the Weierstrass sigma function is entire and not zero, and has a zero at each point of $\Omega$. Using either Schwarz Lemma or the canonical product \eqref{Equation:sigmaDef}, we deduce that it has order of growth (see for instance \cite[Chap.2 \S1]{Lang}) at least $2$ (as a matter of fact the order is $2$).

If we had $\Real(\omega\eta)\leqslant 0$ for all $\omega\in\Omega$, we would deduce from 
\eqref{Equation:sigma} that there exists a constant $C>0$ such that, for all $z\in\C$, 
\[
|\sigma(z)|\leqslant C\exp\bigl( |z| \max\{\Real(\eta_1),\Real(\eta_2)\} \bigr)
\]
with 
\[
C=\sup\{|\sigma(x\omega_1+y\omega_2)|\; \mid \; 0\leqslant x,y\leqslant 1\},
\]
and $\sigma$ would have order $\leqslant 1$.
 \hfill $\square$
 \end{proof}

\begin{proof}[Proof of Proposition \ref{Proposition:sigma}]
 \null\hfill\break
 1. 
Thanks to Lemma \ref{Lemma:sigma}, there is no loss of generality in assuming $\Real(\omega_1\eta_1)>0$. Suppose that there exists a nonzero polynomial in $4+r+s $ variables $P\in\C[T_0,T_1,T_2,T_3,X_1,\dots,X_{r+s}]$ of degree $L_0\geqslant 1$ in $T_0$ such that the meromorphic function 
\[
F(z)=P\bigl(\sigma(z), z, \wp(z), \zeta(z), \rme^{t_1z},\dots, \rme^{t_rz}, f_{u_1}(z),\dots,f_{u_s}(z)\bigr)
\]
is zero. Write 
\[
P(T_1,T_2,T_3,X_1,\dots,X_{r+s},T_0)=
P_0(T_1,T_2,T_3,X_1,\dots,X_{r+s})T_0^{L_0}+
P_1(T_0,T_1,T_2,T_3,X_1,\dots,X_{r+s} ),
\]
with $P_0(T_1,T_2,T_3,X_1,\dots,X_{r+s})\not=0$ while $P_1(T_0,T_1,T_2,T_3,X_1,\dots,X_{r+s})$ has degree $<L_0$ in $T_0$ , so that 
\begin{equation}\label{Equation=F0+F1}
F(z)=F_0(z)\sigma(z)^{L_0}+F_1(z)
\end{equation}
with 
\begin{equation}
\label{F_0etF_1}
\begin{gathered}
F_0(z)=P_0\bigl(z, \wp(z), \zeta(z), \rme^{t_1z},\dots, \rme^{t_rz}, f_{u_1}(z),\dots,f_{u_s}(z) \bigr),
\\
F_1(z)=P_0\bigl(\sigma(z), z, \wp(z), \zeta(z), \rme^{t_1z},\dots, \rme^{t_rz}, f_{u_1}(z),\dots,f_{u_s}(z)\bigr).
\end{gathered}
\end{equation} 
Hence $F_0$ is a nonzero meromorphic function. 
 
 We use Lemma \ref{Lemma:ZeroesMeromorphicFunctions} with $F=F_0$ and $\omega=\omega_1$:
 let $z_0\in\C\smallsetminus\Omega$ be such that, for all $a\in\Z$, $z_0+a\omega_1$ is not a pole of $F_0$ and $F_0(z_0+a\omega_1)\not=0$. 
 
Using \eqref{Equation:sigma}, we deduce that there exists a constant $c_1(z_0)=c_1>1$ such that, for any $a\in\Z$ with sufficiently large $|a|$,
\begin{equation}\label{Equation:BorneF0}
|\sigma(z_0+a\omega_1)| \geqslant \rme^{|a|^2\Real(\omega_1\eta_1)}c_1^{-|a|}.
\end{equation}
From \eqref{Equation:sigma} and \eqref{Equation:lambda} it follows that there exists a constant $c_2(z_0)=c_2>1$ such that, for sufficiently large $|a|$, 
\begin{equation}\label{Equation:BorneF1}
|F_1(z_0+a\omega_1)|\leqslant \rme^{|a|^2\Real(\omega_1\eta_1)(L_0-1)}c_2^{|a|}.
\end{equation}
Combining \eqref{Equation=F0+F1}, \eqref{Equation:BorneF0} and \eqref{Equation:BorneF1} with the assumption $F=0$, we deduce that there exists a constant $c_3(z_0)=c_3>1$ such that, for sufficiently large $|a|$, 
 \[
 |F_0(z_0+a\omega_1)|\leqslant \rme^{-|a|^2\Real(\omega_1\eta_1) }c_3^{|a|}.
 \]
Write 
\begin{equation}\label{Equation:P0}
P_0(T_1,T_2,T_3,X_1,\dots,X_{r+s})=\sum_{\um}\sum_{\uell} p_{\um,\uell} T_1^{m_1}T_2^{m_2}T_3^{m_3}X_1^{\ell_1}\cdots X_{r+s}^{\ell_{r+s}},
\end{equation}
where $\um,\uell$ stands for $(m_1,m_2,m_3,\ell_1,\dots,\ell_{r+s})$. Then $F_0(z_0+a\omega_1)=Q(a, v_1^a,\dots,v_{r+s}^a)$ where
\begin{equation}\label{Equation:Q0}
\begin{aligned}
Q(X_0,X_1,\dots,X_{r+s})=
\sum_{\um}\sum_{\uell} p_{\um,\uell} 
&\rme^{\ell_1t_1z_0} \cdots \rme^{\ell_rt_rz_0}
\bigl(f_{u_1}(z_0)\bigr)^{\ell_{r+1}}\cdots\bigl(f_{u_s}(z_0)\bigr)^{\ell_{r+s}}
\\
&(z_0+X_0\omega_1)^{m_1}\wp(z_0)^{m_2}(\zeta(z_0)+X_0\eta_1)^{m_3}
X_1^{\ell_1}\cdots X_{r+s}^{\ell_{r+s}}
\end{aligned}
\end{equation}
and 
\begin{equation}\label{Equation:vj}
v_i=\rme^{t_i\omega_1}, \quad (1\leqslant i\leqslant r), \qquad 
v_{r+j}=\lambda(u_j,\omega_1), \quad (1\leqslant j\leqslant s).
\end{equation}
Lemma \ref{Lemma:vanderMonde1variable2} implies that $F_0$ vanishes at $z_0+a\omega_1$ for all sufficiently large $|a|$, which is a contradiction.
 
\medskip\noindent
2. The proof of the second part of Proposition \ref{Proposition:sigma} is the same. The meromorphic function $F$ and the polynomial $P$ are now 
\[
F(z)=P\bigl(\sigma(z), z, \wp(z), \zeta(z), \rme^{t_1z},\dots, \rme^{t_rz}, f_{u_1}(z),\dots,f_{u_s}(z),
f_{u_1}(\alpha_1z),\dots,f_{u_s}(\alpha_sz), 
\bigr)
\]
and
\[
\begin{aligned}
P(T_0,T_1,T_2,T_3,X_1,\dots,X_{r+2s})=&
P_0(T_1,T_2,T_3,X_1,\dots,X_{r+2s})T_0^{L_0}
\\
&
+
P_1(T_0,T_1,T_2,T_3,X_1,\dots,X_{r+2s}).
\end{aligned}
\] 
We replace the definition \eqref{F_0etF_1} of $F_0$ and $F_1$ by setting
\[
\begin{gathered}
F_0(z)=P_0\bigl(z, \wp(z), \zeta(z), \rme^{t_1z},\dots, \rme^{t_rz}, f_{u_1}(z),\dots,f_{u_s}(z),
f_{u_1}(\alpha_1z),\dots,f_{u_s}(\alpha_sz) \bigr),
\\
F_1(z)=P_0\bigl(\sigma(z), z, \wp(z), \zeta(z), \rme^{t_1z},\dots, \rme^{t_rz}, f_{u_1}(z),\dots,f_{u_s}(z),
f_{u_1}(\alpha_1z),\dots,f_{u_s}(\alpha_sz)\bigr)
\end{gathered}
\]
and the definition \eqref{Equation:P0} of $P_0$ with
\[
P_0(T_1,T_2,T_3,X_1,\dots,X_{r+2s})=\sum_{\um}\sum_{\uell} p_{\um,\uell} T_1^{m_1}T_2^{m_2}T_3^{m_3}X_1^{\ell_1}\cdots 
X_{r+2s}^{\ell_{r+2s}}.
\]
Formula \eqref{Equation:Q0} becomes
\[
\begin{aligned}
Q(X_0,X_1,\dots,X_{r+2s})&=
\sum_{\um}\sum_{\uell} p_{\um,\uell} 
\rme^{\ell_1t_1z_0} \cdots \rme^{\ell_rt_rz_0}\\
&
\bigl(f_{u_1}(z_0)\bigr)^{\ell_{r+1}}\cdots\bigl(f_{u_s}(z_0)\bigr)^{\ell_{r+s}}
\bigl(f_{u_1}(\alpha_1z_0)\bigr)^{\ell_{r+s+1}}\cdots\bigl(f_{u_s}(\alpha_sz_0)\bigr)^{\ell_{r+2s}}
\\
&(z_0+X_0\omega_1)^{m_1}\wp(z_0)^{m_2}(\zeta(z_0)+X_0\eta_1)^{m_3}
X_1^{\ell_1}\cdots 
X_{r+2s}^{\ell_{r+2s}}.
\end{aligned}
\]
Finally formula \eqref{Equation:vj} is replaced with
\[
v_i=\rme^{t_i\omega_1}, \quad (1\leqslant i\leqslant r), \qquad 
v_{r+j}=\lambda(u_j,\omega_1),\quad
v_{r+s+j}=\lambda(u_j,\alpha_j\omega_1) \qquad (1\leqslant j\leqslant s).
\]
 \hfill $\square$
 \end{proof}
 
 \begin{proof}[Proof of Theorem \ref{Theorem:AlgebraicIndependenceFunctions}]
 Part 1 of Theorem \ref{Theorem:AlgebraicIndependenceFunctions} follows from Propositions \ref{Proposition:sansCMnisigma} and \ref{Proposition:sigma}.1.
 
Part 2 of Theorem \ref{Theorem:AlgebraicIndependenceFunctions} follows from Propositions \ref{Proposition:independance_f_uCM} and \ref{Proposition:sigma}.2. 
 \hfill $\square$
 \end{proof}

\section{Appendix: Weierstrass vs Riemann}

This section is motivated by some remarks by David Masser in his book \cite[Chap.20]{M}, in particular Lemma 20.7 and the two following exercises:

\medskip
\noindent
{\bf 
Exercise 20.87. p.274 } {\em Show that the Riemann zeta function and the Weierstrass zeta function are algebraically independent over $\C$.}
[{\em Hint: } the first is a function of $s$ and the second a function of $z$.]

\medskip
\noindent
{\bf 
Exercise 20.88. p.275} {\em No, seriously, show that the Riemann zeta function $\zeta(z)$ and the Weierstrass zeta function $\zeta(z)$ are algebraically independent over $\C$. Well, you know what I mean\dots}

\bigskip
Here, in order to avoid any confusion, we denote by $\zeta_R$ the Riemann zeta function and we keep the letter $\zeta$ for the Weierstrass zeta function associated with the elliptic function $\wp$. 

\begin{proposition} \label{prop:WeierstrasVsRiemann}
	The four functions $z,\wp(z),\zeta(z), \zeta_R(z)$ are algebraically independent over $\C$. 
\end{proposition}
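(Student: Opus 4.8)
The plan is to exploit the fact that $\zeta_R(z)$, the Riemann zeta function, is a function whose behaviour is governed by an Euler product and a functional equation, while $z$, $\wp(z)$, $\zeta(z)$ are meromorphic on all of $\C$ with prescribed poles on the lattice $\Omega$, so the two families are analytically incompatible. More precisely, I would argue by contradiction: suppose $P\bigl(z,\wp(z),\zeta(z),\zeta_R(z)\bigr)=0$ for some nonzero $P\in\C[T_0,T_1,T_2,T_3]$. Collecting $P$ as a polynomial in $T_3$, write $P=\sum_{j=0}^{N}P_j(T_0,T_1,T_2)T_3^{j}$ with $P_N\neq 0$ and $N\geqslant 1$ (if $N=0$ we contradict the algebraic independence of $z,\wp(z),\zeta(z)$, which follows from Theorem~\ref{Theorem:AlgebraicIndependenceFunctions}.1 with $r=s=0$, or more elementarily from the fact that $\wp$ and $\zeta$ are transcendental over $\C(z)$). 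Then $\zeta_R(z)$ satisfies a polynomial equation of degree $N$ over the field $\C\bigl(z,\wp(z),\zeta(z)\bigr)$.

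The key step is to produce a contradiction from this algebraic relation. I would use the known analytic singularities of $\zeta_R$: it is meromorphic on $\C$ with a single simple pole at $s=1$ and no other poles, whereas any element of $\C\bigl(z,\wp(z),\zeta(z)\bigr)$ is a rational function of three meromorphic functions whose only poles lie on $\Omega$ (for $\wp,\zeta$) — the coordinate $z$ has no poles — so the field $\C\bigl(z,\wp(z),\zeta(z)\bigr)$ consists of meromorphic functions all of whose poles lie in $\Omega$. Now consider a point $s_0\notin\Omega\cup\{1\}$ which is a zero of $\zeta_R$, for instance a point on the critical line; near such $s_0$, all the $P_j\bigl(z,\wp(z),\zeta(z)\bigr)$ are holomorphic, hence the relation $\sum_j P_j\, \zeta_R^{\,j}=0$ is an honest identity of holomorphic functions in a neighbourhood. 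The cleaner route, avoiding zeros of $\zeta_R$ altogether, is to look at the pole at $s=1$: since $1\notin\Omega$ (we may assume this, translating if necessary — or simply note that $\Omega$ being discrete we may choose a basis so that $1\notin\Omega$; alternatively argue with any half-integer translate), the functions $P_j\bigl(z,\wp(z),\zeta(z)\bigr)$ are all holomorphic at $s=1$, and $\zeta_R$ has there a simple pole. Evaluating the order of the pole of $\sum_j P_j\,\zeta_R^{\,j}$ at $s=1$: the term $P_N\zeta_R^{\,N}$ has a pole of order exactly $N$ provided $P_N\bigl(1,\wp(1),\zeta(1)\bigr)\neq 0$, while every other term has a pole of order $<N$; since $P_N$ is a nonzero polynomial and $z,\wp(z),\zeta(z)$ are algebraically independent, $P_N\bigl(z,\wp(z),\zeta(z)\bigr)$ is not the zero function, so after possibly replacing $s=1$ by a nearby base point — or rather, after noting that the residue computation can be localized — we get that $\sum_j P_j\,\zeta_R^{\,j}$ has a genuine pole at $s=1$, contradicting that it is the zero function.

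The main obstacle is the bookkeeping needed to guarantee that $P_N\bigl(z,\wp(z),\zeta(z)\bigr)$ does not itself vanish at the chosen point: one must either invoke the algebraic independence of $z,\wp(z),\zeta(z)$ to know $P_N\bigl(z,\wp(z),\zeta(z)\bigr)\not\equiv 0$ and then pick the base point on the critical line (or wherever $\zeta_R$ is holomorphic and the leading coefficient is nonzero) so that none of the finitely many coefficient functions vanish there while $\zeta_R$ still does something forbidden, or, in the pole-at-$1$ approach, observe that the Laurent expansion at $s=1$ forces a contradiction term by term. I would present the pole-at-$s=1$ version as the main line, since it needs nothing beyond (i) $\zeta_R$ has a pole at $s=1$ and is otherwise meromorphic with poles only there, (ii) the field $\C\bigl(z,\wp(z),\zeta(z)\bigr)$ is a field of meromorphic functions whose poles lie in the discrete set $\Omega$, hence is holomorphic at $s=1$ (choosing $\Omega$ with $1\notin\Omega$), and (iii) the algebraic independence of $z,\wp(z),\zeta(z)$ already established. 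A clean alternative, if one prefers not to fuss over whether $1\in\Omega$: compose with a translation $z\mapsto z+\tau$, which transforms $\C\bigl(z,\wp(z),\zeta(z)\bigr)$ into a field of the same type with lattice $\Omega$ unchanged for $\wp,\zeta$ but shifts the pole of the Riemann-zeta part, reducing to the generic case. Either way the proof is short once the framework is in place.
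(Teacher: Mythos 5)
Your reduction to a relation $\sum_{j=0}^{N}P_j\bigl(z,\wp(z),\zeta(z)\bigr)\zeta_R(z)^{j}=0$ with $P_N\not\equiv0$ is fine, but the pole-at-$s=1$ argument that is supposed to finish the proof has a gap that cannot be repaired within that framework. The difficulty you yourself flag --- that $P_N\bigl(1,\wp(1),\zeta(1)\bigr)$ may vanish --- is not bookkeeping: $s=1$ is the \emph{only} pole of $\zeta_R$, so there is no freedom to move the test point; the translation $z\mapsto z+\tau$ replaces the coefficient functions by $P_j\bigl(z+\tau,\wp(z+\tau),\zeta(z+\tau)\bigr)$ and the pole by $z=1-\tau$, so the critical values $P_j\bigl(1,\wp(1),\zeta(1)\bigr)$ are unchanged; and if the minimum of $\mathrm{ord}_{s=1}(P_j)-j$ is attained for several $j$, the leading Laurent coefficients can perfectly well cancel, so ``term by term'' gives nothing. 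The decisive objection is that purely local information at the unique pole can never prove transcendence here: the identity $(z-1)\cdot\frac{1}{z-1}-1=0$ has the exact shape you are analysing (a function with a simple pole at $s=1$, annihilated by a polynomial whose coefficients are entire and holomorphic at $s=1$), so your argument, if valid, would show that $1/(z-1)$ is transcendental over $\C(z)$. Your fallback of evaluating at a zero $s_0$ of $\zeta_R$ is equally empty: it only yields the single numerical relation $P_0\bigl(s_0,\wp(s_0),\zeta(s_0)\bigr)=0$, which is no contradiction. (A smaller slip: the \emph{field} $\C\bigl(z,\wp(z),\zeta(z)\bigr)$ does not consist of functions with poles only in $\Omega$ --- denominators may vanish anywhere --- though the subring of polynomials in these functions does, which is all you actually use.)

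What is missing is a \emph{global} input from $\zeta_R$ that interacts with infinitely many points, and this is exactly what the paper exploits: from $\zeta_R(s)-1=\sum_{n\geqslant2}n^{-s}$ one gets $|\zeta_R(s)-1|<2^{1-\sigma}$ for $\sigma\geqslant3$, so $\zeta_R\to1$ exponentially fast as $\Re(s)\to+\infty$. Choosing a basis $\omega_1,\omega_2$ of $\Omega$ with positive real parts, writing $A=A_0(X_0,X_1,X_2)+(Y-1)A_1$ with $A_0\neq0$, and evaluating at the two-parameter family $v+a\omega_1+b\omega_2$ (where $\wp$ is periodic and $\zeta$ quasi-periodic), one finds that $A_0\bigl(v+a\omega_1+b\omega_2,\wp(v),\zeta(v)+a\eta_1+b\eta_2\bigr)\to0$; since this is a polynomial in $a$ and $b$, it vanishes identically, and varying $v$ kills $A_0$, a contradiction. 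If you want to salvage your plan, replace the analysis at $s=1$ by this asymptotic analysis at $\Re(s)\to+\infty$.
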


\begin{proof}
From $\zeta(3)=1.202\dots<5/4$ we deduce 
\[
\left(\frac 2 3 \right)^\sigma+\left(\frac 2 4 \right)^\sigma+\cdots+\left(\frac 2 n \right)^\sigma+\cdots<1
\]
for $\sigma=3$, hence for $\sigma\geqslant 3$. 
Therefore, with the classical notations $s=\sigma+\rmi t$ where $\sigma$ and $t$ are real, we have
\begin{equation}\label{EquationZetaRiemann}
|\zeta_R(s)|< 2^{1-\sigma}
\end{equation}
for $\sigma\geqslant 3$.

	We select a basis $\omega_1,\omega_2$ of the lattice $\Omega$ with positive real parts of $\omega_1$ and $\omega_2$. Assume that there is a nonzero polynomial $A(X_0,X_1,X_2,Y)\in\Z[X_0,X_1,X_2,Y]$ such that $A\bigl(z,\wp(z),\zeta(z),\zeta_R(z)\bigr)=0$. Without loss of generality we may assume that $A$ is not divisible by $Y-1$: writing 
\[ 
	A(X_0,X_1,X_2,Y)=A_0(X_0,X_1,X_2)+(Y-1)A_1(X_0,X_1,X_2,Y)
\] 
we assume $A_0\not=0$. Fix $v\in\C \smallsetminus \Omega$ with real part $\geqslant 1$. We have
\[ 
	A\bigl(v+a\omega_1+b\omega_2,\wp(v),\zeta(v)+a\eta_1+b\eta_2, \zeta_R(v+a\omega_1+b\omega_2)\bigr)=0
\] 
	for all $(a,b)\in\Z$ with $a>0$, $b>0$. For $\max\{a,b\}$ sufficiently large, we deduce from \eqref{EquationZetaRiemann}
\[ 
	|\zeta_R(v+a\omega_1+b\omega_2)-1|\leqslant 
	|2^{1-v-a\omega_1-b\omega_2}|\leqslant 
	 2^{-c\max\{a,b\} }
\] 
	with $c=\min\{\Re(\omega_1),\Re(\omega_2)\}>0$. 
	Hence 
\[ 
	(\zeta_R(v+a\omega_1+b\omega_2)-1) A_1\bigl(v+a\omega_1+b\omega_2,\wp(v),\zeta(v)+a\eta_1+b\eta_2, \zeta_R(v+a\omega_1+b\omega_2)\bigr) 
\] 
	tends to $0$ as $\max\{a,b\}\to \infty$. Consequently 
\[ 
	|A_0(v+a\omega_1+b\omega_2,\wp(v),\zeta(v)+a\eta_1+b\eta_2) |
\] 
	also tends to $0$ as $\max\{a,b\}\to \infty$. Let $b$ be a positive integer. Consider the polynomial 
\[
	P_b(X)=A_0(v+X\omega_1+b\omega_2,\wp(v),\zeta(v)+X\eta_1+b\eta_2) \in\C[X].
\]
Since $|P_b(a)|$ tends to $0$ as $a\to\infty$, this polynomial is $0$. Let $z\in\C$ and let 
\[ 
Q_z(Y)=A_0(v+z\omega_1+Y\omega_2,\wp(v),\zeta(v)+z\eta_1+Y\eta_2) \in\C[Y],
\]
so that $Q_z(b)=P_b(z)$. 
Since $P_b(z)=0$ for all positive integers $b$, the polynomial $Q_z(Y)$ has infinitely many zeroes, hence it is $0$. 
We deduce 
\[ 
	A_0(v+X\omega_1+Y\omega_2,\wp(v),\zeta(v)+X\eta_1+Y\eta_2) =0.
\] 
 Since $\omega_1\eta_2-\omega_2\eta_1\not=0$, we deduce that the polynomial $A_0(X,\wp(v),Z)\in\C[X,Z]$ is $0$ for all $v\in\C\smallsetminus\Omega$, a contradiction.
 \hfill $\square$
 \end{proof}

Using Propositions \ref{Prop:SSPIF} and \ref{prop:WeierstrasVsRiemann}, we deduce:

\begin{corollary} %\label{Corollary:almostallRiemann}
	For almost all $n$--tuples $(z_1,\dots,z_n)$ of complex numbers, the $4n$ numbers 
\[ 
	z_1,\dots,z_n,
	\wp(z_1),\dots,\wp(z_n),
	\zeta(z_1),\dots,\zeta(z_n),
	\zeta_R(z_1),\dots,\zeta_R(z_n)
\] 
	are algebraically independent over $\Q(g_2,g_3)$. 
\end{corollary}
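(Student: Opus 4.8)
The plan is to obtain this as an immediate consequence of Proposition~\ref{Prop:SSPIF} applied to the four functions that Proposition~\ref{prop:WeierstrasVsRiemann} singles out. Concretely, I would take $t=4$, $f_1(z)=z$, $f_2(z)=\wp(z)$, $f_3(z)=\zeta(z)$, $f_4(z)=\zeta_R(z)$, and $K=\Q(g_2,g_3)$. The first three functions are meromorphic in $\C$ by construction, and the Riemann zeta function $\zeta_R$ is meromorphic in $\C$ as well (its only pole is the simple pole at $s=1$), so the hypothesis of Proposition~\ref{Prop:SSPIF} on the nature of the $f_j$ is met.

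The key point to check is the algebraic independence hypothesis over $K$. Proposition~\ref{prop:WeierstrasVsRiemann} asserts that $z,\wp(z),\zeta(z),\zeta_R(z)$ are algebraically independent over $\C$; since $K=\Q(g_2,g_3)$ is a subfield of $\C$, algebraic independence over $\C$ trivially entails algebraic independence over $K$ (any polynomial relation with coefficients in $K$ is in particular one with coefficients in $\C$). Hence $f_1,\dots,f_4$ are algebraically independent over the finitely generated extension $K$ of $\Q$, as required.

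Applying Proposition~\ref{Prop:SSPIF} then gives, for almost all tuples $(z_1,\dots,z_n)\in\C^n$ (for Lebesgue measure), that the $4n$ numbers $f_j(z_i)$, namely
\[
z_1,\dots,z_n,\ \wp(z_1),\dots,\wp(z_n),\ \zeta(z_1),\dots,\zeta(z_n),\ \zeta_R(z_1),\dots,\zeta_R(z_n),
\]
are algebraically independent over $K=\Q(g_2,g_3)$, which is exactly the claim.

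There is no real obstacle here: the corollary is a formal deduction, and the only substantive input — the algebraic independence over $\C$ of the quadruple $z,\wp(z),\zeta(z),\zeta_R(z)$ — has already been established in Proposition~\ref{prop:WeierstrasVsRiemann}. The mild verifications worth mentioning explicitly are that $\zeta_R$ is meromorphic on all of $\C$ and that algebraic independence descends along the inclusion $\Q(g_2,g_3)\subset\C$; both are routine.
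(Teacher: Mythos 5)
Your proof is correct and is exactly the deduction the paper intends: the corollary is stated as an immediate consequence of Propositions \ref{Prop:SSPIF} and \ref{prop:WeierstrasVsRiemann}, applied with $t=4$, $f_1(z)=z$, $f_2=\wp$, $f_3=\zeta$, $f_4=\zeta_R$ and $K=\Q(g_2,g_3)$. The verifications you add (meromorphy of $\zeta_R$ on $\C$, descent of algebraic independence from $\C$ to the subfield $\Q(g_2,g_3)$) are routine and consistent with the paper, which gives no further detail.
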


%-------------------------------------------
\bibliographystyle{plain}

\begin{thebibliography}{W79b}
\def\doi#1{\href{https://doi.org/#1}{doi #1}}
\def\zbl#1{\href{http://msp.org/idx/zbl/#1}{Zbl #1}} 
\def\MR#1{\href{https://mathscinet.ams.org/mathscinet/relay-station?mr=#1}{MR #1}}

 
 \bibitem[B02]{B02} 
 C. Bertolin.
 \newblock {\em P\'eriodes de 1-motifs et transcendence.}
 \newblock J. Number Theory {\bf 97} no. 2, 204--221 (2002).
\\ 
\doi{10.1016/S0022-314X(02)00002-1}
\quad
\zbl{1067.11041} 
\quad
\MR{1942957}

 \bibitem[B20]{B20}
 C. Bertolin.
 \newblock {\em Third kind elliptic integrals and 1-motives. }
 With a letter of Y. Andr\'e and an appendix by M. Waldschmidt.
 \newblock J. Pure Appl. Algebra {\bf 224}, no.10, Article ID 106396 (2020).
\\
\doi{10.1016/j.jpaa.2020.106396}
\quad
\zbl{1450.11077}
\qquad
\MR{4093081}




\bibitem[BW25]{BW1}
	C. Bertolin \&\ M. Waldschmidt.
	\newblock
	{\em Variations on Schanuel's Conjecture for elliptic and quasi--elliptic functions.} I: the split case.
	\newblock 
Work in progress
	
	
\bibitem[BW26]{BW2}
 C. Bertolin \&\ M. Waldschmidt. 
\newblock {\em Variations on Schanuel's Conjecture for elliptic and quasi--elliptic functions II: the non--split case. }
\newblock Work in progress. 
	

\bibitem[Be82]{B}	
D. Bertrand.
\newblock {\em Endomorphismes de groupes alg\'{e}briques; applications arithm\'{e}tiques.}
\newblock Approximations diophantiennes et nombres transcendants, Colloq. Luminy/Fr. 1982, Prog. Math. {\bf 31}, 1--45 (1983).
	\\
\zbl{0526.10029}
\qquad
\MR{0702188 }	

\bibitem[D77]{D}
A. Durand 
\newblock
{\em Ind\'{e}pendance alg\'{e}brique de nombres complexes et crit\`{e}re de transcendance.}
\newblock Compos. Math. {\bf 35}, 259--267 (1977).
\\
\href{http://www.numdam.org/item?id=CM_1977__35_3_259_0}{Numdam}
\qquad
\zbl{0372.10022}
\qquad
\MR{0457364}


	
\bibitem[L66]{Lang}
S. Lang. 
\newblock 
{\em Introduction to transcendental numbers. }
\newblock Addison--Wesley Series in Mathematics. 
Reading, Mass., (1966). 
\\
ISBN 9780201041767 \qquad %https://books.google.fr/books?id=1m2NtAEACAAJ
\zbl{0144.04101}
\qquad
\MR{0214547}
\\
\newblock 
Reprint, 
Collected papers. Volume I: 1952--1970.
New York, NY: Springer 
(2000). 
\\ 
\url{https://link.springer.com/book/9781461461364}\qquad
 \zbl{0955.01032}
\qquad
\MR{3087336}

		
\bibitem[M16]{M}
 D. W. Masser.
 \newblock {\em Elliptic functions and transcendence.}
 \newblock Lecture Notes in Mathematics, Vol. {\bf 437}. Springer--Verlag, Berlin--New York (1975).
\\ 
\doi{10.1007/BFb0069432}\qquad
\zbl{0312.10023}
\qquad
\MR{0379391}


\bibitem[Re82]{Reyssat82}
E. Reyssat.
\newblock {\em Propri\'{e}t\'{e}s d'ind\'{e}pendance alg\'{e}brique de nombres li\'{e}s aux fonctions de Weierstrass.}
\newblock Acta Arith. {\bf 41}, 291--310 (1982).
 \\
 \doi{10.4064/aa-41-3-291-310}
\qquad
\zbl{0491.10026}
\qquad
\MR{0668915}
	
	


\bibitem[W79a]{Wald79a} 
	M. Waldschmidt. 
{\em Nombres transcendants et groupes alg\'{e}briques. } 
Compl\'{e}t\'{e} par deux appendices de Daniel Bertrand et Jean-Pierre Serre. 
Ast\'{e}risque {\bf 69-70} (1979). 
\\
\url{https://smf.emath.fr/system/files/filepdf/AS-69.pdf}
\\
\zbl{0428.10017}
\qquad
\MR{MR0890168 }	
	
\bibitem[W79b]{Wald79b} 
	M. Waldschmidt.
	\newblock 
{\em Nombres transcendants et fonctions sigma de Weierstrass. }
	\newblock 	\href{https://mathreports.ca/article/nombres-transcendants-et-fonctions-sigma-de-weirstrass}{C. R. Math. Rep. Acad. Sci. Canada} {\bf 1} (1978/79), no. 2, pp. 111--114. 
\\
\MR{0519536}

	\bibitem[W88]{W1}
	M. Waldschmidt.
	\newblock {\em Ind\'{e}pendance alg\'{e}brique de nombres de Liouville.}
	\newblock In ``Cinquante ans de polyn\^{o}mes. Fifty years of polynomials"
	Proc. Conf. in Honour of Alain Durand, Paris/Fr. 1988, 
	Lect. Notes Math. {\bf 1415}, 225--235 (1990). 
\\
\doi{10.1007/BFb0084876} 
\qquad
\zbl{0729.11035}
\qquad
\MR{1044117 }
	
	\bibitem[W00]{W2}
	M. Waldschmidt.
	\newblock {\em Diophantine Approximation on Linear Algebraic Groups.}
	\newblock Grundlehren der Mathematischen Wissenschaften {\bf 326}.
	Springer-Verlag,
	2000. 
\\ 
\doi{10.1007/978-3-662-11569-5} 
\qquad
\zbl{0944.11024}
\qquad
\MR{1756786}
\end{thebibliography}

\authoraddresses{
Michel Waldschmidt\\ 
Sorbonne Universit\'{e}, CNRS, IMJ-PRG, F-75005 Paris, France
\\	
\email michel.waldschmidt@imj-prg.fr
}

\end{document}